\documentclass[11pt,letterpaper]{article}

\usepackage[utf8]{inputenc}
\usepackage[T1]{fontenc}
\usepackage{lmodern}
\usepackage{microtype}
\usepackage[margin=1in]{geometry}

\usepackage{amsmath,amssymb,amsthm,amsfonts,mathtools}
\usepackage{mathrsfs}
\usepackage{bm}
\usepackage[colorlinks=true,linkcolor=blue!70!black,citecolor=green!50!black,urlcolor=blue!70!black]{hyperref}
\usepackage[nameinlink,capitalize]{cleveref}
\usepackage{enumitem}
\usepackage{algorithm}
\usepackage{algorithmic}
\usepackage{xcolor}

\theoremstyle{plain}
\newtheorem{theorem}{Theorem}[section]
\newtheorem{lemma}[theorem]{Lemma}
\newtheorem{proposition}[theorem]{Proposition}
\newtheorem{corollary}[theorem]{Corollary}

\theoremstyle{definition}
\newtheorem{definition}[theorem]{Definition}

\theoremstyle{remark}
\newtheorem{remark}[theorem]{Remark}

\newcommand{\E}{\mathbb{E}}

\newcommand{\R}{\mathbb{R}}

\newcommand{\cD}{\mathcal{D}}

\newcommand{\Var}{\mathrm{Var}}

\newcommand{\norm}[1]{\left\lVert #1\right\rVert}

\newcommand{\argmin}{\mathop{\mathrm{argmin}}}

\newcommand{\Hamm}{d_H}

\DeclareMathOperator{\Bin}{Bin}

\DeclareMathOperator{\Unif}{Unif}

\newcommand{\ES}{\mathsf{ES}}

\allowdisplaybreaks

\newenvironment{proofsketch}{\noindent {\em {Proof sketch.}}}{$\blacksquare$\vskip \belowdisplayskip}

\newcounter{constant}

\newcommand{\newconstant}[1]{%
  \begingroup
    \refstepcounter{constant}%
    \label{#1}%
  \endgroup
}

\newcommand{\useconstant}[1]{C_{\ref{#1}}}

\title{Robust Statistical Estimators with Bounded Empirical Sensitivity\thanks{Authors are listed in alphabetical order.}}
\author{Valentio Iverson\thanks{University of Waterloo. \texttt{viverson@uwaterloo.ca}. Supported by the University of Waterloo through an MURA and a URF project.} \and Gautam Kamath\thanks{University of Waterloo and Vector Institute. \texttt{g@csail.mit.edu}. Supported by a Canada CIFAR AI Chair, an NSERC Discovery Grant, and an Ontario Early Researcher Award.} \and Argyris Mouzakis\thanks{University of Waterloo. \texttt{amouzaki@uwaterloo.ca}. Supported by an Ontario Early Researcher Award and a David R. Cheriton Graduate Scholarship.} \and Adam Smith\thanks{Boston University. 
\texttt{ads22@bu.edu}. Supported in part by US NSF award 2232694 and a gift from Apple Research.}}
\date{\today}

\begin{document}
\maketitle

\begin{abstract}
We introduce a new measure of robustness for statistical estimators, which we call \emph{empirical sensitivity}.
An estimator $\hat \theta$ has bounded empirical sensitivity if, with high probability over a dataset $X = (X_1, \dots, X_n) \sim \mathcal{D}^{\otimes n}$, for any dataset $Y$ obtained by modifying at most $\eta n$ points in $X$, we have that $\hat \theta(Y)$ is close to $\hat \theta(X)$. 

We study bounds on this quantity for the prototypical problem of Gaussian mean estimation. 
We prove new lower bounds, showing that for any estimator $\hat \mu$ which achieves an optimal $\ell_2$-error bound of $O(\sqrt{d/n})$, the empirical sensitivity is at least $\Omega(\eta + \sqrt{\eta d/n})$. 
The two terms arise due to obstructions on the mean and variance (via an Efron-Stein argument) of such an estimator.
We show that this bound is tight up to logarithmic factors, by employing recent results for robust empirical mean estimation.

\end{abstract}

\setcounter{tocdepth}{1}

\section{Introduction}
Robustness is a classic topic of study within Statistics~\cite{HuberR09}.
It tries to understand and bound how much an estimator can fluctuate when data diverges from our assumptions. 
Such divergences could arise for innocuous reasons such as model misspecification, or more malicious reasons like an adversary manipulating or poisoning the dataset. 

A popular style of robustness ensures that an estimator maintains accuracy after an adversary modifies a portion of the input dataset.
In more detail, suppose we have an estimator $\hat \theta$ for a parameter $\theta^*$ of some family of distributions $\mathcal{D}_\theta$. 
Let $X = (X_1, \dots, X_n) \sim \mathcal{D}_{\theta^*}$, and let $Y$ be any dataset obtained by modifying $\eta n$ points in $X$. 
Robustness of the estimator $\hat \theta$ is then measured in terms of $\|\hat \theta(Y) - \theta^* \|$, the distance between the estimator on the contaminated dataset and the true parameter of interest. 

While algorithms satisfying this definition are quite powerful, the notion does not capture some natural properties we might hope for an estimator to possess.
We illustrate by considering the estimator $\hat \mu$ to be the median, and the dataset $X$ to be sampled from a univariate Gaussian distribution $\mathcal{N}(\mu, 1)^{\otimes n}$.
For an uncontaminated dataset $X$, with high probability, the median is well known to achieve the near optimal statistical rate: $|\hat \mu(X) - \mu| \leq O(1/\sqrt{n})$.
Furthermore, the median is the canonical example of a robust statistic.
For any dataset $Y$, formed by changing $\eta n$ entries of $X$, we have that $|\hat \mu(Y) - \mu| \leq O(\eta + 1/\sqrt{n})$---a mild (but inherent) degradation of the rate from the uncontaminated case.
However, we consider how much the estimator can shift due to contamination. 
For simplicity, focus on the case when $\eta = 1/n$, i.e., a single point is contaminated. 
The aforementioned robustness guarantee allows us only to say that $|\hat \mu(Y) - \hat \mu(X)| \leq O(1/n + 1/\sqrt{n})$.
This is a rather large radius for only a single contamination, with the dominant term being the parametric rate of $1/\sqrt{n}$. 
In fact, as we show in Section~\ref{sec:emp-median}, the median enjoys a quadratically stronger guarantee on this quantity: $|\hat \mu(Y) - \hat \mu(X)| \leq O(1/n)$. 
To summarize, the conventional measures of robustness may not give insight into how \emph{sensitive} the estimator may be. 

Motivated by this deficiency, we introduce a new measure of robustness for statistical estimators, which we call the \emph{empirical sensitivity}. 
Informally, we would like for an estimator $\hat \theta$ to simultaneously have the following two properties.
\begin{itemize}
    \item Accuracy: Given a dataset $X \sim \mathcal{D}_{\theta^*}$, we have that $\|\hat \theta(X) - \theta^*\|$ is small.
    \item Bounded empirical sensitivity:
    For $X \sim \mathcal{D}_{\theta}$, and for any dataset $Y$ obtained by modifying $\eta n$ entries of $X$, we have that $\|\hat \theta(Y) - \hat \theta(X)\|$ is small---either in expectation over $X$ or with high probability.
\end{itemize}
Similar to traditional robustness, we focus on the case where the dataset $X$ is drawn stochastically, but $Y$ is an arbitrary or worst-case contamination of $X$.
As usual, obtainable guarantees will depend on properties of the class of distributions $\mathcal{D}_\theta$. 
On the other hand, this can be seen as a \emph{stronger} type of robustness: by triangle inequality, an accurate estimator with bounded empirical sensitivity will also enjoy a bound on $\|\hat \theta(Y) - \theta^* \|$, the traditional object of study in robust estimation.

The robustness of the estimated value to corruption has been considered before as a \emph{worst-case} constraint on a function, notably in the design of differentially private algorithms. To our knowledge, ours is the first work that studies the interaction of empirical sensitivity with statistical efficiency, and also the first to explicitly study the average-case notion. See Related Work (\Cref{sec:related}) for more detail.

\subsection{Results and Techniques}

We initiate study of empirical sensitivity focusing on one of the most fundamental statistical tasks: \emph{Gaussian mean estimation}. 
Our main result is the following lower bound on the empirical sensitivity of optimal estimators.

\begin{theorem}
\label{thm:main-theorem}
Consider any estimator $\hat \mu$ such that  
\begin{equation} \label{eq:ideal-MSE}
\sup_{\mu \in \mathbb{R}^d} \mathbb{E}_{X \sim \mathcal{N}(\mu, \mathbb{I}_d)^{\otimes n}} \| \hat \mu(X) - \mu \|_2^2 \lesssim \frac{d}{n}. 
\end{equation} 

For every $\eta \in (0,1)$,\footnote{Throughout, it is implicit that $\eta \geq 1/n$, i.e., there is at least one contaminated point.} we have that 
\[ \sup_{\mu \in \mathbb{R}^d} \left( \mathbb{E}_{X \sim \mathcal{N}(\mu, \mathbb{I}_d)^{\otimes n}} \left[ \sup_{Y \colon d_H(X,Y) \le \lfloor \eta n \rfloor} \| \hat \mu(Y) - \hat \mu(X) \|_2^2 \right] \right)^{1/2} \ge \Omega \left( \eta + \sqrt{\frac{\eta d}{n}} \right).\]
\end{theorem}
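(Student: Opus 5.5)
We prove the two terms separately; since $\max(a,b)\ge (a+b)/2$, it suffices to show that the left-hand side is at least $\Omega(\eta)$ and, separately, at least $\Omega(\sqrt{\eta d/n})$. Write $S(X) := \sup_{Y\colon d_H(X,Y)\le k}\|\hat\mu(Y)-\hat\mu(X)\|_2$ with $k=\lfloor \eta n\rfloor$. We may assume $\eta \lesssim 1$, and for the second term also $\eta d/n \lesssim 1$; otherwise the accuracy constraint already forces large sensitivity by the same arguments with constants adjusted.

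\textbf{The $\Omega(\eta)$ term (mean obstruction).} We use a two-point coupling. Fix $\mu_0=0$ and $\mu_1=c\eta e_1$. The total variation distance between $\mathcal{N}(\mu_0,\mathbb{I}_d)$ and $\mathcal{N}(\mu_1,\mathbb{I}_d)$ is $\Theta(\eta)$ for small $c$. Couple $X\sim\mathcal{N}(\mu_0,\mathbb{I}_d)^{\otimes n}$ and $X'\sim\mathcal{N}(\mu_1,\mathbb{I}_d)^{\otimes n}$ coordinatewise by the maximal coupling. Then $d_H(X,X')\sim\Bin(n,\Theta(c\eta))$, and for $c$ small this is at most $k$ with constant probability $p$ by Markov's inequality. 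We need $\eta n\ge 1$ here; when $k$ is small we instead use a Poisson tail bound, still obtaining constant probability.

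On the event $E=\{d_H(X,X')\le k\}$ we have $\|\hat\mu(X)-\hat\mu(X')\|\le S(X)$. On the other hand, accuracy and Markov's inequality say that $\|\hat\mu(X)-\mu_0\|$ and $\|\hat\mu(X')-\mu_1\|$ are each $O(\sqrt{d/n})$ with probability close to $1$. This alone only gives separation when $c\eta\gg\sqrt{d/n}$, which is the wrong regime. So instead we restrict to the first coordinate and use the exact constraint: a near-optimal estimator must track the mean in every direction. The cleanest route is to apply the argument to $\langle\hat\mu,e_1\rangle$ and exploit \eqref{eq:ideal-MSE} via a Bayesian prior $\mu\sim\mathcal{N}(0,\tau^2\mathbb{I}_d)$. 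This forces $\E[\hat\mu(X)\mid \bar X]$ to be close to $\bar X$ on average, so shifting $\mu$ by $c\eta$ must shift $\E\hat\mu$ by $\Omega(\eta)$ in $e_1$ for a typical $\mu$. Concretely, averaging over a random shift direction and using the Cramér--Rao / van Trees bound at the optimal constant, the derivative $\partial_\mu \E_\mu\hat\mu$ has trace $\Omega(d)$ on average. Hence there is a direction $u$ and a base point $\mu$ with $\langle \E_{\mu+c\eta u}\hat\mu-\E_\mu\hat\mu,u\rangle\ge \Omega(\eta)$. Combining this with the coupling gives $\E[S]\ge \Omega(\eta)\cdot p - o(\eta)$, and then Cauchy--Schwarz yields the bound.

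\textbf{The $\Omega(\sqrt{\eta d/n})$ term (variance obstruction via Efron--Stein).} First, van Trees under a Gaussian prior with large $\tau$ shows that near-optimal MSE forces $\sum_j\Var(\hat\mu_j(X))\gtrsim d/n$ for some $\mu$. Otherwise the estimator would be nearly constant, and bias would dominate for some $\mu$. Efron--Stein then gives
\[
\frac{1}{2}\sum_{i=1}^n \E\|\hat\mu(X)-\hat\mu(X^{(i)})\|^2 \ge \Var \gtrsim d/n,
\]
where $X^{(i)}$ resamples the $i$-th point. To pass to $k$-point changes, we partition $[n]$ into $n/k$ blocks of size $k$ and apply Efron--Stein to the block-indexed independent variables. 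This gives
\[
\sum_{b}\E\|\hat\mu(X)-\hat\mu(X^{(B_b)})\|^2 \gtrsim d/n .
\]
Each summand is at most $\E S(X)^2+\E S(X^{(B_b)})^2$ up to a triangle-inequality factor, which equals $2\E S^2$ because $X^{(B_b)}$ has the same law as $X$. Hence $(n/k)\E S^2\gtrsim d/n$, that is, $\E S^2\gtrsim \eta d/n$.

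\textbf{Main obstacle.} The main obstacle is the first term. Turning the average-case MSE condition into a pointwise lower bound on how fast $\E_\mu\hat\mu$ moves with $\mu$ requires the van Trees-type trace argument. That argument must be carried out carefully enough that the constant hidden in $\lesssim d/n$ still leaves an $\Omega(1)$ fraction of the Jacobian trace.
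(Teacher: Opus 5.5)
Your variance term is correct and is essentially the paper's argument: block resampling with vector Efron--Stein, with a Gaussian-prior van Trees/Cramér--Rao bound in place of the paper's HCR-on-a-grid and coordinatewise lifting. (Each block term is in fact directly at most $\E[S(X)^2]$, since $d_H(X,X^{(B_b)})\le k$, so no triangle inequality is needed.) The gap is in the $\Omega(\eta)$ term, at the step ``combining this with the coupling gives $\E[S]\ge\Omega(\eta)\cdot p-o(\eta)$.'' For the projected scalar estimator $h$, what the coupling actually yields is
\[
\E\bigl[(h(X')-h(X))\mathbf{1}_E\bigr]=\bigl(a(\mu_1)-a(\mu_0)\bigr)-\E\bigl[(h(X')-h(X))\mathbf{1}_{E^c}\bigr].
\]
Nothing in your argument prevents the $\Theta(\eta)$ mean displacement from being carried by $E^c$, the samples where more than $k$ coordinates moved. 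The only general controls on the last term are $\mathbb{P}(E^c)$ (after clipping $h$ to a bounded interval) or $\sqrt{\mathbb{P}(E^c)}\cdot O(n^{-1/2}+\eta)$ (Cauchy--Schwarz plus accuracy). When $\mathbb{P}(E^c)$ is merely a constant, which is all that Markov or a Poisson tail gives, both bounds exceed $\eta$ once $\eta\lesssim n^{-1/2}$. Concretely, for $k=1$ you get $\mathbb{P}(E^c)=\mathbb{P}(\Bin(n,\Theta(c/n))\ge 2)\asymp c^2$. The error bounds are then $c^2$ or $c/\sqrt n$, against a displacement of order $c/n$, and no choice of $c$ keeps the net bound at $\Omega(\eta)$. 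Upgrading Markov to Chernoff rescues the coupling only when $k\gtrsim\log n$, so that $\mathbb{P}(E^c)\le\eta/6$; that is exactly the paper's high-corruption proof.

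For $1\le k\lesssim\sqrt n$ you need a different idea, which the paper supplies. Shift a uniformly random $k$-subset of the projected samples by a small constant $\delta$. This is a valid corruption with probability one, so there is no failure event. Its law is statistically close to the global shift:
\[
\chi^2(Q_{\mu',\delta}\,\|\,P_{\mu'+\eta\delta})\le\exp\bigl(\tfrac{k^2}{n}(e^{\delta^2}-1-\delta^2)\bigr)-1\lesssim k^2\delta^4/n,
\]
which follows from the Gaussian likelihood-ratio identity and an MGF bound for $|I\cap J|$ with $I,J$ independent random $k$-subsets. Cauchy--Schwarz against the $O(1/n)$ risk then gives $|\E_{Q}h-\E_{P_{\mu'+\eta\delta}}h|=O(\eta\delta^2)$. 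Tracking of the global shift gives a displacement of $\eta\delta-o(\eta\delta)$, so $\E[S]\gtrsim\eta\delta$ for small $\delta$, and the two regimes overlap for large $n$.

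Relatedly, the tracking step you flag as the main obstacle is the easy part; it needs only a smooth prior and the bound $\|b\|_{L^2}\,\|w(\cdot-t)-w\|_{L^2}$. It should, however, be carried out averaged over that prior, along a direction $u$ and offset $\lambda_\ast$ chosen by averaging over random directions as in the paper's reduction, rather than at a single base point. The reason is that $O(1/\sqrt n)$ accuracy in a fixed direction holds only on average, and you need it to control the error terms above.
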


This lower bound is tight up to logarithmic factors.
Very recent work by Chen, Ding, Majid, and McKelvie~\cite{ChenDMM26} studies robust \emph{empirical} mean estimation. 
An easy argument shows that their estimator satisfies near-optimal accuracy and also average-case
empirical sensitivity (Theorem~\ref{thm:upper-main}) essentially matching \Cref{thm:main-theorem} above. 

We recall that, for robust Gaussian mean estimation, the  optimal rate for $\|\hat \mu(X) - \mu\|_2$ is $\tilde \Theta\left(\eta + \sqrt{\frac{d}{n}} \right)$.
By triangle inequality, this implies that one may have hoped for an empirical stability bound as small as $\tilde \Theta(\eta)$.\footnote{Note that triangle inequality only implies a lower bound of $\Omega(\eta)$ in the regime where $\eta = \Omega\left(\sqrt{d/n}\right)$. 
Indeed, proving the lower bound of $\eta$ in all regimes appears to require significant technical work, and is quite different from how this term arises for conventional robust statistics, see Section~\ref{sec:main-results} for more details.}
Instead, the larger bound of $\tilde \Theta\left(\eta + \sqrt{\frac{\eta d}{n}}\right)$ arises. 

Technically, the two terms in the lower bound are proven by showing obstructions on the mean and variance of the estimator. 

\paragraph{The mean obstruction.} We first reduce a $d$-dimensional uniformly accurate estimator to a scalar estimator along a direction and an orthogonal subspace. Then we prove scalar sensitivity lower bounds in two overlapping regimes. In the low corruption regime $\eta \lesssim \frac{1}{\sqrt{n}}$, we compare a valid local corruption, which shifts $k$ random samples, to a statistically indistinguishable global dataset corruption that the estimator is forced to track due to its accuracy guarantees.  For $\eta \gtrsim \frac{\log n}{n}$, a coupling argument compares two samples from two nearby Gaussian means whose total variance distance is of order $\eta$, so that the coupled samples differ in at most $\lfloor \eta n \rfloor$ coordinates with high probability. The two regimes overlap for large $n$, covering the full range of values for $\eta$.\footnote{In Section~\ref{sec:bernoulli}, we also show this lower bound for the conceptually simpler case of Bernoulli parameter estimation.} 

\paragraph{The variance obstruction.} The $\sqrt{\eta d/n}$ lower bound is driven by clean-sample variability. Indeed, we show that any uniformly MSE-accurate estimator must have output variance $\Omega(d/n)$ at some parameter. Using this, a block-resampling argument and vector Efron-Stein inequality convert this clean output variance into sensitivity to replacing an $\eta$ fraction of the samples.

\subsection{Related Work}
\label{sec:related}

Robust estimation is a vibrant area of study, particularly recent work with a focus on minimax statistical rates and computational efficiency in multivariate settings, see, e.g.,~\cite{DiakonikolasKKLMS16,LaiRV16} and~\cite{DiakonikolasK22} for a textbook treatment. 
As already described, empirical sensitivity is stronger than 
what is implied by traditional robustness guarantees.
One recent work of Chen, Ding, Majid, and McKelvie~\cite{ChenDMM26} studies differentially private Bayesian estimation, which they reduce to robust estimation of an empirical mean.
While not originally conceived as such, their robust algorithm provides an estimator with bounded empirical sensitivity for a particular statistic that satisfies the desired accuracy guarantee (i.e., the empirical mean). 

Stability has long been an object of study in statistics and machine learning, see, e.g.,~\cite{KearnsR97,BousquetE00}. 
This generally focuses on understanding how the error 
of an algorithm changes if one removes a single point.
Our focus instead is on the effect of modifying a \emph{collection} of points (and also on the entire output, not just the error).  
The recent, contemporaneous work of Chakraborty, Luo, and Barber~\cite{ChakrabortyLB26} explores minimax risk when an estimator must 
have low \textit{global sensitivity}---that is, \textit{worst-case} empirical robustness with respect to a single corruption. 

They suggest study of an distributional variant, where the dataset is sampled from a distribution belonging to some family, as a direction for future work. 

Empirical stability also relates to concepts considered in the literature on differential privacy (DP)~\cite{DworkMNS06}. 
Many DP mechanisms operate by adding noise calibrated to the worst-case sensitivity (called the \textit{global sensitivity}) of a function. 
(This differs from our notion in that it is for worst-case neighboring datasets and considers only a single corruption.) 
The literature also considers the \emph{local sensitivity}~\cite{NissimRS07}: the sensitivity of the function at a particular dataset. 
In particular, our definition can be viewed as an average-case bound on
\emph{local sensitivity with step size $s$} (Definition 4.4 of the full version of~\cite{NissimRS07}) for $s=\eta n$.

A similar notion is the \textit{inverse sensitivity} at a data set $X$, employed in the inverse sensitivity mechanism~\cite{MirMNW11, JohnsonS13,AsiD20a, AsiD20b}. This measures how many corruptions to $X$ would be needed to change the value of an estimator from $f(X)$ to a given target value $z$. 
Empirical sensitivity provides an average-case bound on this quantity. That connection was used implicitly in some works relating robustness to privacy, such as \cite{HopkinsKMN23,AsiUZ23}.

Several other ``local'' variants of sensitivity have been considered, for example \textit{down sensitivity}~\cite{ChenZ13,RaskhodnikovaS15} (related to \textit{resilience} in robust statistics \cite{SteinhardtCV18}), which only considers neighboring datasets where points have been \textit{removed}.  
We believe the full extent of connections between empirical sensitivity and DP are yet to be understood.

Finally, the recent work of Trillos, Jaffe, and Sen~\cite{TrillosJS25} introduces a new notion of sensitivity, which measures change in a statistic when infinitesimal Gaussian noise is added to each datapoint. 
Similar to our notion, it is an average-case (as opposed to worst-case) quantity, but the different contamination models appear to capture different phenomena. 
In particular, their notion may connect more closely with the literature on measurement error and local differential privacy~\cite{Warner65,EvfimievskiGS03,KasiviswanathanLNRS11}.

\section{Problem Setup and Definitions}

\textbf{General Notation.}
We use the notation $A\lesssim B$ to mean that $A\le C B$ for a universal constant $C>0$, and $A\gtrsim B$ analogously. We write $A\asymp B$ if both $A\lesssim B$ and $B\lesssim A$ hold. We also note that $\widetilde O(\cdot)$ suppresses polylogarithmic factors in the relevant problem parameters. Throughout the whole of this work, we assume that $d, n \ge 1$ are fixed, while all norms are the standard Euclidean norms.

For two datasets $x,x'\in(\R^d)^n$, let $\Hamm(x, x') \coloneqq \# \{ i \in [n] \colon x_i \ne x_i' \}$ denote the Hamming distance at the sample level. Thus, $\Hamm(x, x') \le k$ means that $x'$ can be obtained from $x$ by replacing at most $k$ samples by arbitrary vectors in $\R^d$. Given a sample space $\mathcal{X}$, we denote the set of all distributions over $\mathcal{X}$ by $\Delta(\mathcal{X})$. For any distribution $\mathcal{D} \in \Delta(\mathcal{X})$, $\mathcal{D}^{\otimes n}$ denotes the product distribution over $\mathcal{X}^n$, where each marginal is $\mathcal{D}$. Given a dataset $X = (X_1, \dots, X_n)$ drawn i.i.d.\ from $\mathcal{D}$, we write $X \sim \mathcal{D}^{\otimes n}$. Whenever using the symbols of probability and expectation, we use a subscript to denote what the randomness is over in cases where it might not be clear from the context, e.g., $\mathbb{E}_X[\cdot]$. 

For $\eta \in (0,1)$, we consider the $\eta$-corruption model where the adversary is allowed to contaminate any of $k \coloneqq \lfloor \eta n \rfloor$ points from the given $n$ sample points.  

\begin{definition}[Uniform MSE Accuracy]
\label{def:mse}
Let $\cD$ be a distribution class parameterized by $\theta \in \Theta$, and let $D_{\theta} \in \cD$.
We assume that $f$ is an estimator that takes $n$ samples drawn i.i.d.\ from $D_{\theta}$ and outputs an estimate of $\theta$.
We say that $f$ is \emph{$(C_{\mathsf{MSE}}, \gamma)$-uniformly MSE-accurate} if
\begin{equation}\label{eq:clean-mse-main}
\sup_{\theta\in\Theta}\E_{X \sim D_{\theta}^{\otimes n}}\left[\norm{f(X)-\theta}^2\right] \le C_{\mathsf{MSE}} \gamma.
\end{equation}
\end{definition}

In the above definition, the error rate includes two parameters. These are $C_{\mathsf{MSE}}$ and $\gamma$. To give a bit more context about this, $C_{\mathsf{MSE}}$ should be thought of as an absolute constant, whereas $\gamma$ should be treated as a component of the rate that may depend on various aspects of the class of distributions under consideration (e.g., the dimension of the data), as well as the number of samples used by our estimator.

\begin{definition}[Pointwise empirical sensitivity \cite{NissimRS07}]
\label{def:pointwise-sens}
For $\eta \in (0,1)$, we define the $\eta$-corruption pointwise empirical sensitivity of an estimator $f$ at dataset $X$ as: 
\[ S_\eta^f(X) \coloneqq \sup_{X'\colon \Hamm(X,X')\le \lfloor \eta n \rfloor}\norm{f(X)-f(X')}. \]
\end{definition}

The pointwise definition of empirical sensitivity was considered previously, under the name \textit{local sensitivity at step size $\lfloor\eta n\rfloor$}.

\begin{definition}[Distributional empirical sensitivity]
\label{def:dist-sens}
Let $\cD$ be a family of distributions parameterized by $\theta \in \Theta$.
For $D_{\theta} \in \cD, q\in\{1,2\}$, and $\eta \in (0,1)$, we define the $\eta$-corruption $L^q$-distributional empirical sensitivity of an estimator $f$ at distribution $D_{\theta}$ as 
\[ \ES_{\eta,q}(f;\theta) \coloneqq \biggl(\E_{X \sim D_{\theta}^{\otimes n}}\bigl[(S_\eta^f(X))^q\bigr]\biggr)^{1/q}. \]
Thus $\ES_{\eta,1}$ is expected pointwise sensitivity and $\ES_{\eta,2}$ is root-mean-square pointwise sensitivity.
\end{definition}

\subsection{Adversary Models}
\label{subsec:adv}

The robustness literature frequently studies the relative power of different adversaries~\cite{BlancLMT22,BlancV25,BlancHMS26,BenDavidBKL23,LechnerBK25}. 
We briefly discuss various adversary models, listed here in increasing amount of strength.


\begin{definition}
\label{def:three-adversaries}
Let a dataset $X = (X_1, \dots, X_n) \sim \mathcal{D}^{\otimes n}$, where $\mathcal{D}$ is a probability distribution. 
Fix a corruption fraction $\eta \in (0,1)$ and define $k \coloneqq \lfloor \eta n \rfloor$ to be the number of contaminated points.\footnote{One could consider broader classes of adversaries.
For example, an adversary who adaptively chooses $k$ points adversarially, and then resamples them obliviously. Or an adversary who obliviously chooses $k$ points and then replaces them arbitrarily. We omit discussion of such adversaries to streamline the presentation.}
\begin{enumerate}
    \item \emph{Resampling adversary}. A subset $I \subseteq [n]$ of size $k$ is chosen uniformly at random, and the samples $\{X_i \colon i \in I\}$ are replaced by fresh independent samples from the same clean distribution as $X$. 
    This is the weakest adversary: both the removed points and the inserted points are stochastic and drawn from the clean distribution.

    \item \emph{Stochastic adversary}. A subset $I \subseteq [n]$ of size $k$ is chosen uniformly at random, and the samples $\{X_i \colon i \in I\}$ are replaced by independent samples from a fixed replacement distribution $\mathcal{Q}$ chosen before seeing $X$. The replacement distribution may differ from the clean distribution, but it is not chosen adaptively after observing the realized dataset.

    \item \emph{Adaptive adversary}. After observing the clean dataset $X$, the adversary may choose both the corrupted indices and the replacement values arbitrarily, subject only to the constraint $d_H(X, Y) \le k$. This is the adversary encoded by $S_\eta^f(X)$.
\end{enumerate}
\end{definition}

Our discussion thus far has focused on the adaptive adversary. 
This is reflected in our main result.
The result focuses on the class of Gaussian distributions with identity covariance $\{\mathcal{N}(\mu, \mathbb{I}_d)\}_{\mu \in \R^d}$, and shows a lower bound on 
\[
    \sup_{\mu \in \R^d} \ES_{\eta,2}(f; \mu) 
    = \sup_{\mu \in \R^d} \left( \E_{X \sim \mathcal{N}(\mu, \mathbb{I}_d)^{\otimes n}} \bigl[ (S_\eta^f(X))^2 \bigr] \right)^{1/2}.
\]

The two terms in the lower bound of Theorem~\ref{thm:main-theorem} come from different adversaries. 
The variance-obstruction term of $\Omega(\sqrt{\eta d/n})$ arises even for the weakest adversary, the resampling adversary. 
On the other hand, our proof for the mean-obstruction term of $\Omega(\eta)$ requires a stronger adversary. More precisely, the low-corruption proof uses a random-subset mean-shift adversary. While this mean-shift adversary and the standard stochastic adversary are technically distinct -- as they induce different joint distributions between the clean and corrupted samples -- they are identical in their marginal distributions. Because our argument relies solely on these marginal distributions, the proof applies directly to both the mean-shift and stochastic adversaries. By contrast, the high-corruption proof uses a coordinatewise TV-coupling adversary, where each sample is coupled independently and the corrupted indices are those coordinates on which the coupling fails; this is adaptive to the realized sample, but only coordinate-by-coordinate. As we show in Section~\ref{sec:resampling-adversary}, more empirically stable estimators exist under the (weakest) resampling adversary. 
In this case, the empirical mean achieves an empirical stability of $O(\sqrt{\eta d/n})$, avoiding the $\Omega(\eta)$ term that is necessary under the adaptive adversary.

As the empirical sensitivity is a quantity measured with respect to a realized dataset, all our adversaries are defined as modifying a dataset, rather than a distribution (as is sometimes done in the conventional robustness literature). 
Our adaptive adversary is the same as what is usually called the \emph{strong contamination} model in the robustness literature~\cite{DiakonikolasK22}. 
The stochastic adversary can be seen as the moral equivalent of Huber contamination~\cite{Huber64}.
The resampling adversary has no equivalent, as, by construction, the underlying distribution is uncontaminated.

\section{Main Results}
\label{sec:main-results}

\newconstant{const:upperMSE}
\newconstant{const:mainLower}
\newconstant{const:meanlowLower}
\newconstant{const:meanhighLower}
\newconstant{const:varLower}

In this section, we give the bulk of our results. We will be working with the class of Gaussian distributions with unknown mean $\mu$ and known covariance matrix $\Sigma$ which, without loss of generality, we assume to be equal to the identity matrix $\mathbb{I}$. We will consider mean estimators with bounded empirical sensitivity for this class, and give both upper and lower bounds.

The following theorem is the main result of this paper: a tight lower bound of the empirical sensitivity of accurate estimators, complemented by a recent upper bound result in \cite{ChenDMM26}:

\begin{theorem} \label{thm:main}
Let $f \colon (\mathbb{R}^d)^n \to \mathbb{R}^d$ be measurable and $\left(C_{\mathsf{MSE}}, \frac{d}{n}\right)$-uniformly MSE-accurate for the class of Gaussians $\{\mathcal{N}(\mu, \mathbb{I})\}_{\mu \in \R^d}$. Then for every $\eta \in (0,1)$ and all sufficiently large $n$ depending only on $C_{\mathsf{MSE}}$, we have
    \[ \sup_{\mu \in \mathbb{R}^d} \ES_{\eta, 2}(f; \mu) \ge \Omega_{C_{\mathsf{MSE}}}  \left( \eta + \sqrt{\frac{\eta d}{n}} \right). \]
\end{theorem}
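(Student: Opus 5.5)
Since $\max(a,b)\ge (a+b)/2$, I will prove the two terms of \Cref{thm:main} separately: $\sup_\mu \ES_{\eta,2}(f;\mu)\gtrsim \sqrt{\eta d/n}$ (the variance obstruction) and $\sup_\mu \ES_{\eta,2}(f;\mu)\gtrsim \eta$ (the mean obstruction). Throughout, $k=\lfloor \eta n\rfloor\ge 1$. Both bounds will come from exhibiting, for each $X$, a specific (possibly random) $Y$ with $\Hamm(X,Y)\le k$. This works because $S_\eta^f(X)\ge \|f(X)-f(Y)\|$ pointwise, so $\ES_{\eta,2}(f;\mu)^2\ge \E\|f(X)-f(Y)\|^2$ for any such coupling.

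\emph{Variance term.} First I show that uniform MSE accuracy forces clean output variance $\Omega(d/n)$ at some $\mu$. I would argue by contradiction using a Bayesian/Cramér--Rao argument. Put the prior $\mu\sim\mathcal N(0,\tau^2\mathbb I)$ with $\tau^2 = C d... $, more precisely $\tau^2=c/n$ per coordinate. The Bayes risk of any estimator is then at least $d\,\tau^2/(1+n\tau^2)\asymp d/n$.

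Suppose $\Var(f(X))=\E\|f(X)-\E f(X)\|^2\le \epsilon d/n$ for every $\mu$. Then $f$ is essentially the deterministic function $m(\mu)=\E_\mu f$, whose bias satisfies $\|m(\mu)-\mu\|^2\le C_{\mathsf{MSE}}d/n$. A cleaner route is the van Trees inequality applied to the estimator $f$: it lower-bounds the Bayes risk through the Fisher information. Combining this with the bias--variance decomposition and the fact that $m$ is smooth with $\nabla m$ close to the identity on average, I would get $\E\,\Var_\mu(f)\ge \frac{(\E\,\mathrm{tr}\nabla m)^2}{nd}-\ldots \gtrsim d/n$. The required control $\E\,\mathrm{tr}\nabla m\gtrsim d$ comes from integrating by parts against a Gaussian prior of width $\gg\sqrt{C_{\mathsf{MSE}}/n}$ per coordinate, since the bias is uniformly small compared to the prior width. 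This is where "$n$ sufficiently large depending on $C_{\mathsf{MSE}}$" may enter.

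Next, split $[n]$ into $\lceil n/k\rceil$ blocks of size at most $k$. The vector Efron--Stein inequality gives $\Var(f(X))\le \frac12\sum_{b}\E\|f(X)-f(X^{(b)})\|^2$, where $X^{(b)}$ resamples block $b$. Each term is at most $\ES_{\eta,2}(f;\mu)^2$, so $d/n\lesssim (n/k)\,\ES^2$, which yields $\ES\gtrsim\sqrt{kd/n^2}\asymp\sqrt{\eta d/n}$.

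\emph{Mean term.} First restrict to one direction: project $f$ onto $e_1$ and vary only $\mu_1$. This gives a scalar estimator with MSE $\lesssim d/n$, though we would prefer $1/n$. To fix this, average over the remaining coordinates using a prior on $\mu_{2:d}$. Alternatively, note that since the bound is in $\ell_2$ and $\|f\|\ge|f_1|$, it suffices to find a single coordinate $j$ whose marginal MSE is $\lesssim 1/n$ at the relevant $\mu$; averaging the total MSE over coordinates gives such a $j$ on average under a prior. For $\eta\gtrsim \log n/n$, compare $\mu$ and $\mu'=\mu+c\eta e_j$. Then $\mathrm{TV}(\mathcal N(\mu,\mathbb I),\mathcal N(\mu',\mathbb I))\asymp c\eta$. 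Coupling the samples coordinatewise via the maximal coupling, the number of disagreements is $\Bin(n,c\eta)$, which is at most $k$ with high probability. On that event, $S_\eta^f(X)\ge |f_j(X)-f_j(X')|$. Accuracy at both means then gives $\E|f_j(X)-f_j(X')|\ge c\eta-O(1/\sqrt n)$, which is fine when $\eta\gg 1/\sqrt n$. To cover the intermediate range $\eta\le 1/\sqrt n$, I would use the low-corruption argument: shifting $k$ random samples by $\delta$ in direction $e_j$ produces a dataset whose law is close in TV (up to $O(\cdot)$) to the dataset with all samples shifted by $k\delta/n$. Accuracy forces $f_j$ to move by about $k\delta/n$, with error governed by an averaged-over-$\mu$ Bayes argument rather than the raw MSE. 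Taking $\delta$ of order $\sqrt{n/k}$ keeps the TV distance bounded away from one and gives a shift of order $\sqrt{k/n}\ge\eta$.

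The main obstacle is this mean term in the regime $\eta\lesssim 1/\sqrt n$. Here the naive error $1/\sqrt n$ swamps $\eta$, so I would need to average over a prior on $\mu$ (a Bayesian/derivative argument showing $\frac{d}{d\mu}\E f_j\approx 1$ on average) instead of using pointwise accuracy.
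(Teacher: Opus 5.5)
Your variance term works, although what you need is pointwise Cram\'er--Rao rather than van Trees. For the Gaussian location family the mean map $m$ is smooth, and $\Var_\mu(f)\ge\|\nabla m(\mu)\|_F^2/n\ge(\operatorname{tr}\nabla m(\mu))^2/(nd)$. Gaussian integration by parts gives $\E_\pi\operatorname{tr}\nabla m\ge d\bigl(1-\sqrt{C_{\mathsf{MSE}}/(n\tau^2)}\bigr)\ge d/2$ once $\tau^2\ge 4C_{\mathsf{MSE}}/n$; your first choice $\tau^2=c/n$ with $c$ small would not work. This is a multivariate repackaging of the paper's per-coordinate argument, and your block Efron--Stein step is the paper's.

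The genuine gap is the mean term for $k\lesssim\sqrt n$. You correctly flag this as the crux, but you do not resolve it, and the sketch you give is wrong. With $\delta\asymp\sqrt{n/k}$ and $k\le\sqrt n$ you have $\delta\gtrsim n^{1/4}\gg\sqrt{2\log n}$. The $k$ shifted samples are then gross outliers relative to the sample median. So the local shift is asymptotically perfectly distinguishable from the global shift $P_{\mu+k\delta/n}$: the TV distance tends to one and the $\chi^2$-divergence blows up. The conclusion is also false. A forced displacement of order $\sqrt{k/n}$ would give $\sup_\mu\ES_{\eta,2}\gtrsim n^{-1/2}$ at $k=1$. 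But for $d=1$ the sample median is $O(1/n)$-MSE-accurate, and by \cref{lem:median-bound} its sensitivity at $k=1$ is bounded by a central order-statistic spacing, which is $O(1/n)$. This is consistent with \cref{thm:upper-main}.

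Beyond the parameter choice, ``TV bounded away from one'' is the wrong tool. Since $h$ ranges over $[0,1]$, a TV comparison bounds $|\E_{Q_{\mu',\delta}}h-\E_{P_{\mu'+t}}h|$ only by the TV distance. Even the bound $\mathrm{TV}\le\frac12\sqrt{\chi^2}\lesssim k\delta^2/\sqrt n$ exceeds the signal $t=\eta\delta=k\delta/n$ by a factor $\delta\sqrt n$.

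The missing idea is to let the estimator's own precision multiply the divergence. Centering at $\mu'+t$ and applying Cauchy--Schwarz gives $|\E_{Q_{\mu',\delta}}h-\E_{P_{\mu'+t}}h|\le\bigl(\E_{P_{\mu'+t}}[(h-\mu'-t)^2]\,\chi^2(Q_{\mu',\delta}\|P_{\mu'+t})\bigr)^{1/2}$. The Gaussian likelihood-ratio identity then reduces the divergence to a hypergeometric MGF, giving $\chi^2(Q_{\mu',\delta}\|P_{\mu'+t})\le\exp\bigl(\frac{k^2}{n}(e^{\delta^2}-1-\delta^2)\bigr)-1\lesssim k^2\delta^4/n$ for $k\le\kappa\sqrt n$. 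Average over a smooth compactly supported prior $w$. This also delivers your tracking step, $\int w(\mu')(a(\mu'+t)-a(\mu'))\,d\mu'\ge t-O(t/\sqrt n)$. The error becomes $O(\sqrt{C_{\mathsf{int}}}\,\eta\delta^2)$ against the signal $\eta\delta$. So $\delta$ must be a small constant, and the forced displacement is $\Theta(\eta)$.

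A smaller point: your coupling step uses accuracy pointwise at two means. For the selected coordinate, an $O(1/n)$ risk holds only on average over a prior; pointwise its bias is only $\le\sqrt{C_{\mathsf{MSE}}d/n}$. The paper instead averages $a(\mu'+\eta)-a(\mu')$ over $\mu'\in[0,1-\eta]$. This telescopes to endpoint averages and makes the coupling argument work all the way down to $k\gtrsim\log n$.
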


The lower bound comprises two independent components: a mean obstruction, which gives the $\eta$ term, and a variance obstruction, which gives the $\sqrt{\eta d/n}$ term. Each part captures the estimator's vulnerability to a different adversarial mechanism. The mean obstruction reflects the estimator's sensitivity to small shifts in the underlying distribution's mean, whereas the variance obstruction isolates the estimator's inherent variability when resampled from the same clean distribution. 

\subsection{Upper bound from robust empirical-mean recovery}
Our lower bound is complemented by a high-probability upper bound, which is a sensitivity consequence of an estimator that remains close to the clean empirical mean simultaneously over all allowed corruptions. 
The following is a consequence of a result of Chen, Ding, Majid, and McKelvie~\cite{ChenDMM26}.

\begin{theorem}
\label{thm:upper-main} 
Fix $\eta \in (0,1/3)$ and $\beta \in (0,1/2)$, there exists a measurable estimator $f_{\eta, \beta} : (\mathbb{R}^d)^n \to \mathbb{R}^d$ such that, for every $\mu \in \mathbb{R}^d$, with probability at least $1 - \beta$ over $X \sim \mathcal{N}(\mu, \mathbb{I}_d)^{\otimes n}$, both of the following hold:
\begin{align*} 
\| f_{\eta, \beta}(X) - \mu \|_2^2 &\le O \left( \frac{d + \log(1/\beta)}{n} \right) \quad \text{and} \quad S_{\eta}^{f_{\eta, \beta}}(X) \le \widetilde{O} \left( \eta + \sqrt{\eta \cdot \frac{d + \log(1/\beta)}{n}} \right)
\end{align*} 
\end{theorem}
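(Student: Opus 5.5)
We will prove \Cref{thm:upper-main} by reduction to the robust empirical-mean recovery guarantee of Chen, Ding, Majid, and McKelvie~\cite{ChenDMM26}, plus the triangle inequality. That guarantee gives an estimator $g$ and a deterministic notion of a \emph{good} dataset. A dataset $X$ is good if the empirical mean $\bar X$ and the empirical covariance of every subset of $(1-O(\eta))n$ points are controlled, i.e., resilience/stability holds. For any $Y$ with $\Hamm(X,Y)\le \lfloor \eta n\rfloor$, the guarantee then reads
\[
\norm{g(Y)-\bar X}\le \widetilde O\Bigl(\eta+\sqrt{\eta\,\tfrac{d+\log(1/\beta)}{n}}\Bigr).
\]
The key feature is that the benchmark is the \emph{empirical} mean $\bar X$ rather than $\mu$. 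This is exactly what removes the additive $\sqrt{d/n}$ term and leaves only the $\sqrt{\eta d/n}$ term. We set $f_{\eta,\beta}\coloneqq g$, instantiated with corruption level $\eta$ and failure probability $\beta$.

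\textbf{Step 1 (good event).} For $X\sim\mathcal N(\mu,\mathbb I_d)^{\otimes n}$, we show that $X$ is good with probability at least $1-\beta/2$. This is the standard stability condition for Gaussians, which holds once $n\gtrsim d+\log(1/\beta)$ up to logs; outside that regime the bounds are vacuous or can be handled trivially. In addition, $\norm{\bar X-\mu}^2\le O((d+\log(1/\beta))/n)$ with probability at least $1-\beta/2$, by $\chi^2_d$ concentration. Intersecting the two events gives probability at least $1-\beta$.

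\textbf{Step 2 (accuracy).} On the good event, apply the recovery guarantee with $Y=X$ (zero corruptions) to get $\norm{f(X)-\bar X}\le\widetilde O(\eta+\sqrt{\eta(d+\log(1/\beta))/n})$. Adding $\norm{\bar X-\mu}$ only yields $\norm{f(X)-\mu}\lesssim \eta+\sqrt{(d+\log(1/\beta))/n}$, which is not the claimed accuracy, since it contains an $\eta$ term. To obtain the clean rate we should use a version of the guarantee in which the deviation scales with the actual number of corruptions. Such versions are typical of filter or stability-based estimators, whose error is $O(\epsilon\sqrt{\log(1/\epsilon)}+\dots)$ for the \emph{actual} corruption fraction $\epsilon$. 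Failing that, we modify the estimator: output $\bar X$ itself if every $Y$ near $X$ has $g(Y)$ close to $\bar X$. I expect this to be the main subtlety, namely checking that the cited theorem gives $f(X)$ within $O(\sqrt{(d+\log(1/\beta))/n})$ of $\bar X$ on clean data. If it only gives the $\eta$-dependent error, we would restate the accuracy part with the additional $\widetilde O(\eta)$ term, or compose $g$ with a projection step.

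\textbf{Step 3 (sensitivity).} For any $Y$ with $\Hamm(X,Y)\le\lfloor\eta n\rfloor$, the triangle inequality gives
\[
\norm{f(Y)-f(X)}\le\norm{f(Y)-\bar X}+\norm{f(X)-\bar X}.
\]
Both terms are bounded by the recovery guarantee on the good event. Taking the supremum over $Y$ gives $S^{f}_\eta(X)\le\widetilde O(\eta+\sqrt{\eta(d+\log(1/\beta))/n})$. Measurability of $f$ is inherited from the construction in~\cite{ChenDMM26}, or can be ensured by a measurable selection among near-minimizers.
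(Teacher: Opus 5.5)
Your Step 3 is fine for the unmodified estimator $g$, but Step 2 is where the proof actually lives, and you leave it open. Applied to $g$ with $Y=X$, the recovery guarantee only gives $\norm{g(X)-\overline{X}}\le\alpha$, with $\alpha$ of order $\eta\sqrt{\log(1/\eta)}+\sqrt{\eta(d+\log(1/\beta))/n}$. So $\norm{g(X)-\mu}^2$ carries an $\eta^2\log(1/\eta)$ term, which is not $O((d+\log(1/\beta))/n)$ once $\eta\gg\sqrt{d/n}$.

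Two of your three escape routes do not close this. The first, a guarantee scaling with the actual corruption fraction, is not something the cited theorem provides; its bound depends only on the budget $\eta$. The third, adding $\widetilde O(\eta)$ to the accuracy, proves a weaker statement than the theorem.

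That leaves the modification, but you never check it against Step 3. Once $f(Y)$ may equal $\overline{Y}$ rather than $g(Y)$, the recovery guarantee no longer bounds $\norm{f(Y)-\overline{X}}$. As written, no single estimator is shown to satisfy both conclusions.

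The missing step is to force $f(X)=\overline{X}$ on the good event and then bound sensitivity against $\overline{X}$. The paper does this by opening up \cite{ChenDMM26}. On the good event, for every $\eta$-corruption $X'$ of $X$, the clean $X$ lies in the feasible set $\mathcal F_{\eta,\beta}(X')$, and every feasible candidate has mean within $\alpha$ of $\overline{X}$. The estimator outputs the mean of the feasible candidate of minimum Hamming distance to the input. This gives $f(X)=\overline{X}$, since $X\in\mathcal F_{\eta,\beta}(X)$ with zero disagreements. Hence $S^f_\eta(X)=\sup_{X'}\norm{f(X')-\overline{X}}\le\alpha$, with no triangle inequality needed.

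Your own floated fix can also be completed, and it stays black-box. With $k=\lfloor\eta n\rfloor$, set $f(Z)=\overline{Z}$ if $\sup_{Y:\Hamm(Y,Z)\le k}\norm{g(Y)-\overline{Z}}\le\alpha$, and $f(Z)=g(Z)$ otherwise. On the good event $f(X)=\overline{X}$. For a corruption $X'$ there are two cases:
\begin{itemize}
\item If $f(X')=g(X')$, it is within $\alpha$ of $\overline{X}$.
\item If $f(X')=\overline{X'}$, take $Y=X$ in the defining condition to get $\norm{g(X)-\overline{X'}}\le\alpha$. Combined with $\norm{g(X)-\overline{X}}\le\alpha$, this yields $\norm{\overline{X'}-\overline{X}}\le 2\alpha$.
\end{itemize}
Supplying this case analysis, or the paper's minimum-Hamming selection, is what the proposal needs.

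Separately, splitting into $n\gtrsim d+\log(1/\beta)$ and a ``vacuous or trivial'' complement is unjustified. For $n\le d$ the sensitivity target is $\gtrsim\sqrt{\eta}$, which is not vacuous. The split is also unnecessary, since the cited result carries no sample-size condition.
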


\begin{proofsketch}
The primitive from \cite{ChenDMM26} gives a high-probability event on which, simultaneously for every $\eta$-corruption $X'$ of the clean sample $X$, every feasible cleaned candidate has empirical mean close to the empirical mean $\overline{X}$. We modify the estimator so that, on clean input, it chooses the feasible candidate with minimum Hamming distance to the input. Therefore, one can show that on the same good event, $f_{\eta, \beta}(X) = \overline{X}$. The complete proof can be found in \cref{app:upper-main}.
\end{proofsketch}

\begin{remark}[Efficient versus inefficient upper bounds]
While the estimator of Theorem~\ref{thm:upper-main} should be viewed as the statistically sharp upper bound needed to match our lower bound up to logarithmic factors, it is not computationally efficient.
The same work also gives an efficient estimator with a worse empirical-mean recovery rate. 
Applying the same triangle-inequality argument as above yields an efficient estimator for our empirical-sensitivity problem with sensitivity of order $\widetilde{O} \left( \eta + \sqrt{\eta \cdot \sqrt{\frac{d}{n}}} \right)$ with high probability, together with clean-data MSE accuracy. 

Their efficient estimator is complemented by a computational lower bound against restricted class of algorithms.
Informally, they show that one can not do better than this weaker $O\left(\sqrt{\eta \sqrt{\frac{d}{n}}}\right)$ rate with any algorithm implemented by low-degree polynomials.
We note that this does \emph{not} imply a computational lower bound for our setting, as their setting is specific to the empirical mean, whereas the problem we are considering allows any estimator which is MSE accurate.

It remains open whether efficient, uniformly accurate estimators can achieve the statistically optimal empirical-sensitivity scale $\eta + \sqrt{\frac{\eta d}{n}}$, or whether the worse efficient rate $\eta + \sqrt{\eta \sqrt{\frac{ d}{n}}}$ is inherent for computationally-efficient estimators under our empirical sensitivity condition. 
\end{remark}

\subsection{Mean obstruction}
We establish the mean obstruction by separating the analysis into two distinct regimes that collectively cover the entire spectrum of corruption levels. In the low-corruption regime, where $\eta = O(1/\sqrt{n})$, our argument relies on bounding the $\chi^2$-divergence between two distinct adversaries: a local shift of exactly $k$ random samples by a constant $\delta$ (which represents a valid adversarial $\eta$-contamination), and a global shift of all samples by $\eta \delta$ (which represents a genuine change in the underlying parameter). Because the $\chi^2$-divergence between these two distributions scales as $O(k^2\delta^4/n)$, choosing a sufficiently small $\delta$ renders the adversarial contamination statistically indistinguishable from a true parameter shift. Since any MSE-accurate estimator is mathematically forced to track the global shift to maintain its low average risk, this indistinguishability ensures the estimator is similarly displaced by the local corruption, yielding the $\Omega(\eta)$ lower bound. 

For the high-corruption regime, where $\eta = \Omega(\log n/n)$, we shift to a coordinate-wise maximal coupling argument. We couple two Gaussian distributions whose means are separated by exactly $\eta$. By showing that samples drawn from these coupled distributions differ in at most $k$ coordinates with high probability in this regime, we show that an $\eta$-bounded adversary can seamlessly bridge the two datasets. Since the estimator's accuracy forces its mean response to be displaced by $\Omega(\eta)$ between these two parameter values, the empirical sensitivity is similarly bounded below. Crucially, these two regimes are not isolated; they overlap for all sufficiently large $n$, ensuring that our combined lower bound fully covers the entire spectrum of corruption allowances without any gaps.

To prove both high-dimensional theorems, we reduce them to the single-dimensional case using a projection and conditioning lemma. Informally, any uniformly MSE-accurate estimator $f \colon (\mathbb R^d)^n\to\mathbb R^d$ can be projected along a carefully chosen direction to yield a one-dimensional estimator $g \colon \mathbb R^n\to\mathbb R$ with parameter $\mu'$ that inherits the $O(1/n)$ Bayes risk, allowing us to lower-bound the high-dimensional sensitivity $\ES_{\eta,2}(f;\mu)$ via the scalar empirical sensitivity $S_\eta^g(X)$.

\begin{theorem}[Mean obstruction: low-corruption regime]
\label{thm:mean-low}
Let $f \colon (\mathbb R^d)^n\to\mathbb R^d$ be measurable and $\left(C_{\mathsf{MSE}}, \frac{d}{n}\right)$-uniformly MSE-accurate for the class of Gaussians $\{\mathcal{N}(\mu, \mathbb{I})\}_{\mu \in \R^d}$. There exist constants $\kappa=\kappa(C_{\mathsf{MSE}})>0$ and $\useconstant{const:meanlowLower}=\useconstant{const:meanlowLower}(C_{\mathsf{MSE}})>0$ such that, for every $\eta \in (0,1)$ such that $1 \le \lfloor \eta n \rfloor \le \kappa \sqrt{n}$ and for all sufficiently large $n$, we have
\[ \sup_{\mu\in\mathbb R^d} \ES_{\eta,2}(f;\mu) \ge \useconstant{const:meanlowLower}\eta. \]
\end{theorem}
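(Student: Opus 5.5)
We first reduce to a scalar problem and then compare a local corruption with a global mean shift. For the reduction, fix the unit direction $e_1$ and write each sample as $X_i=(Z_i,W_i)$ with $Z_i\in\R$ and $W_i\in\R^{d-1}$. We put a Gaussian prior $\mathcal N(0,\tau^2/n\cdot \mathbb I_{d-1})$ on the orthogonal mean component, or simply average over it. Uniform MSE accuracy then gives the following: for a suitable choice of the orthogonal parameter, the scalar map $g(z)\coloneqq \E_W[\langle f(z,W),e_1\rangle]$ (or a fixed-$W$ slice chosen by averaging) is a scalar estimator of the first coordinate $m$ of the mean. Its risk is $\E(g-m)^2\lesssim C_{\mathsf{MSE}}/n$, at least on average over $m$ drawn from a prior on an interval of length $\asymp 1/\sqrt n$. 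To keep this $O(1/n)$ rather than $O(d/n)$, we pick the coordinate with smallest risk. Summing over the $d$ coordinates, the $d$ coordinate risks sum to at most $C_{\mathsf{MSE}}d/n$, so some coordinate has risk at most $C_{\mathsf{MSE}}/n$ (uniformly in $\mu$ after a pigeonhole over a finite grid of priors). Corruptions that modify only the first coordinate of $k$ samples are valid corruptions for $f$, and $\|f(X)-f(X')\|\ge |\langle f(X)-f(X'),e_1\rangle|$. Jensen applied to the $W$-average then gives $\ES_{\eta,2}(f;\mu)\ge$ the scalar sensitivity of $g$ along the corruption we construct.

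In the scalar problem, take $k=\lfloor\eta n\rfloor$ and a constant $\delta>0$. Let $P$ be the law of $Z+\delta\mathbf 1_I$ with $Z\sim\mathcal N(m,1)^{\otimes n}$ and $I$ a uniformly random $k$-subset. Let $Q=\mathcal N(m+k\delta/n,1)^{\otimes n}$. Computing the likelihood ratio of $P$ against $\mathcal N(m,1)^{\otimes n}$ and using the hypergeometric overlap $|I\cap I'|$ of two independent copies, one gets $\chi^2(P\|Q)\lesssim k^2\delta^4/n$. This follows by expanding $\E[\exp(\delta^2|I\cap I'|)]$ around its mean $k^2/n$ and subtracting the mean-shift contribution; the global shift exactly cancels the first-order term, leaving second-order terms. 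With $k\le\kappa\sqrt n$ and $\delta$ a fixed constant, $\chi^2\le c$ small. This step is the main obstacle: one must verify that the cancellation really leaves only $k^2\delta^4/n$ and not $k\delta^2$, and handle the hypergeometric moment generating function uniformly.

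Given the $\chi^2$ bound, argue by contradiction. Suppose $\E_Z|g(Z+\delta\mathbf 1_I)-g(Z)|^2\le c'\eta^2$ for all $m$; this quantity is bounded by $\ES_{\eta,2}^2$ since $Z+\delta\mathbf 1_I$ is a $k$-corruption. Then $\E_P[g]$ is within $\sqrt{c'}\eta$ of $\E_{\mathcal N(m,1)^{\otimes n}}[g]$. Cauchy–Schwarz with the $\chi^2$ bound gives
\[|\E_P[g-m']-\E_Q[g-m']|\le \sqrt{\chi^2(P\|Q)\,\E_Q(g-m')^2}\lesssim \sqrt{c}\cdot\sqrt{C_{\mathsf{MSE}}/n},\]
where $m'=m+k\delta/n$. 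Averaging over the prior on $m$, accuracy forces the bias map $m\mapsto \E_m g$ to increase by roughly $k\delta/n\asymp\eta\delta$ between $m$ and $m'$ for most $m$. This uses a Bayes-risk/van Trees argument: an estimator with risk $O(1/n)$ on an interval of width $\gg 1/\sqrt n$ must have average derivative of $\E_m g$ near $1$. The error term $\sqrt{c/n}$ is comparable to $\eta$ only when $\eta\gtrsim 1/\sqrt n$, so we refine it. We apply the $\chi^2$ comparison to the centered statistic $g-\E_Q g$, whose second moment under $Q$ is its variance. We then use the averaging to pick $m$ where the derivative of $\E_m g$ is $\ge 1/2$, and shrink $c$ (via $\kappa,\delta$) so that $\sqrt{c}$ times the relevant fluctuation is below $\eta\delta/4$. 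If this is too lossy, we instead use $\chi^2\lesssim k^2\delta^4/n=\eta^2 n\delta^4$ directly, giving the error term $\sqrt{\eta^2n\delta^4\cdot C/n}=\eta\delta^2\sqrt C$, which is $\ll \eta\delta$ for small $\delta$. This contradicts the assumed stability and yields $\ES_{\eta,2}\ge C\eta$ with $C=C(C_{\mathsf{MSE}})$.
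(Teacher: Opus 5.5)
Your plan follows the paper's proof in all essentials. Both reduce to a scalar estimator by projecting onto a good direction and averaging out the orthogonal part; the paper averages over a uniformly random direction on $\mathbb S^{d-1}$, you pigeonhole over coordinates, and either works. Both then compare the random $k$-subset $\delta$-shift with the global shift by $t=k\delta/n$ via $\chi^2=\E_{I,J}\exp\bigl(\delta^2(|I\cap J|-k^2/n)\bigr)-1$. Both conclude with the error term $\sqrt{\chi^2\cdot\mathrm{MSE}}\lesssim\sqrt{C}\,\eta\delta^2\ll\eta\delta$, which is your last fallback and exactly the paper's estimate. The centering detour gains nothing: variance and MSE are both of order $1/n$, and the whole gain comes from using $\chi^2\lesssim k^2\delta^4/n$ rather than $\chi^2\le c$. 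The paper handles the hypergeometric bound you flag as the main obstacle in two lines. Conditional on $A$, write $e^{\lambda|A\cap B|}=\prod_{i\in A}\bigl(1+(e^\lambda-1)\mathbf 1\{i\in B\}\bigr)$. Expanding and using $\mathbb P(R\subseteq B)\le (k/n)^{|R|}$ gives $\E e^{\lambda H}\le\bigl(1+\frac kn(e^\lambda-1)\bigr)^k$. Hence $\E e^{\lambda(H-k^2/n)}\le\exp\bigl(\frac{k^2}{n}(e^\lambda-1-\lambda)\bigr)$, which for $\lambda=\delta^2\le 1$ and $k\le\kappa\sqrt n$ is $1+O(k^2\delta^4/n)$.

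The step that does not go through as written is the pointwise use of accuracy at the end. Your reduction only yields an averaged bound $\int\pi(m)\,\E_m(g-m)^2\,dm\lesssim C_{\mathsf{MSE}}/n$ over a prior $\pi$. Pigeonholing over coordinates, or over a finite grid of priors, controls an average over $\mu$, because the good coordinate may depend on $\mu$. So a $d$-free \emph{uniform} scalar risk bound does not follow. Hence you cannot pick one $m$ with $a(m+t)-a(m)\ge t/2$ and then invoke risk $O(1/n)$ at $m'=m+t$ in the Cauchy--Schwarz step. Nor is the increment $\approx t$ ``for most $m$''. For example, $a(m)=m+\frac{2}{\sqrt n}\sin(\sqrt n\,m)$ is the mean map of $\bar X+\frac{2\sqrt e}{\sqrt n}\sin(\sqrt n\,\bar X)$, which has uniform risk $O(1/n)$, yet $a'<0$ on a third of the line.

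The repair is what the paper does: keep everything averaged. Integrate the pointwise inequality
\[ \E_m\bigl[S^g_\eta\bigr]\ge a(m+t)-a(m)-\sqrt{\chi^2\cdot\E_{m+t}\bigl(g-(m+t)\bigr)^2} \]
against a smooth density $w$ supported inside the range of the prior. The main term satisfies
\[ \int w\,\bigl(a(\cdot+t)-a\bigr)=t+\int b(v)\bigl(w(v-t)-w(v)\bigr)\,dv\ge t-\|b\|_{L^2}\,t\,\|w'\|_{L^2}, \]
which is at least $t/2$ once the support of $w$ has width a large multiple of $\sqrt{C_{\mathsf{MSE}}/n}$; the paper simply takes $[0,1]$. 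The error term is handled by Cauchy--Schwarz in $m$, which needs only the prior-averaged risk.
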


\begin{proofsketch}
We establish the low-corruption lower bound through an indistinguishability argument between two adversaries. As previously stated, we may assume that $\mu' \in [0,1]$ and thus it suffices to analyze a clipped scalar estimator $h : \mathbb{R}^n \to [0,1]$. 

\begin{enumerate}
    \item First, because the estimator inherits a low $O(1/n)$ integrated risk under a smooth prior, its expected output is forced to accurately track the true parameter. Specifically, if the entire data distribution is subjected to a global mean shift of $\eta \delta$, the estimator's mean output must naturally shift by approximately $\eta \delta$ to maintain its low average risk. We note that this global shift modifies \textit{all} $n$ points, meaning that it is \textit{not} a valid adversarial $\eta$-contamination. 
    \item Next, we consider a local shift: an adversary selects a uniformly random subset of exactly $k \coloneqq \lfloor \eta n \rfloor$ coordinates and shifts them by $\delta$.\footnote{In Section~\ref{subsec:adv} we defined the stochastic adversary as replacing part of the dataset with points drawn independently from another distribution. In the present case, the points are not resampled from scratch, but the $\delta$-shift results in a Gaussian distribution with different mean. While technically not exactly the same, this is arguably morally equivalent to a stochastic adversary.} Because this modifies at most $k$ samples, it \textit{is} a valid $\eta$-corruption. In the low-corruption regime $k = O(\sqrt{n})$, we bound the $\chi^2$-divergence between the global shift and the local shift by $O(k^2 \delta^4/n)$. By choosing the constant $\delta$ to be sufficiently small, this divergence becomes of lower order than the estimator's precision, making the valid local corruption statistically indistinguishable from the global shift. 
    \item Since the estimator is forced to track the global shift to preserve its accuracy, and it cannot distinguish the valid local corruption from the global shift, the adversary's valid $\eta$-corruption forces the estimator to have a large displacement of $\Omega(\eta \delta) = \Omega(\eta)$. Taking the supremum over the prior establishes the $\Omega(\eta)$ empirical sensitivity lower bound. 
\end{enumerate}

The complete proof can be found in \cref{app:mean-low}.
\end{proofsketch}

\begin{theorem}[Mean obstruction: high-corruption regime]
\label{thm:mean-high}
Let $f \colon (\mathbb R^d)^n\to\mathbb R^d$ be measurable and $\left(C_{\mathsf{MSE}}, \frac{d}{n}\right)$-uniformly MSE-accurate for the class of Gaussians $\{\mathcal{N}(\mu, \mathbb{I})\}_{\mu \in \R^d}$. There exist constants $C_{\mathsf{high}}=C_{\mathsf{high}}(C_{\mathsf{MSE}})>0$ and $\useconstant{const:meanhighLower}=\useconstant{const:meanhighLower}(C_{\mathsf{MSE}})>0$ such that, for all sufficiently large $n$ and every $\eta \in (0,\frac{1}{10})$\footnote{The $\frac{1}{10}$ is an artifact of the technical aspects of the proof, and could potentially be boosted to a larger constant.} such that $\lfloor \eta n \rfloor \ge C_{\mathsf{high}}\log n$, we have
\[ \sup_{\mu\in\mathbb R^d} \ES_{\eta,2}(f;\mu) \ge \useconstant{const:meanhighLower} \eta. \]
\end{theorem}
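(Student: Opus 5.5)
We reduce to one dimension, couple two Gaussian samples whose means differ by order $\eta$, and show the coupled datasets are within Hamming distance $k \coloneqq \lfloor \eta n\rfloor$ with high probability. The estimator's accuracy then forces its outputs on the two datasets to differ by order $\eta$ on average.

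\textbf{Reduction.} As in the low-corruption regime, we first apply the projection and conditioning reduction. We fix a unit direction $e_1$ and write $\mu=(\mu',\mu_\perp)$. Averaging over a smooth prior on the orthogonal coordinates, we obtain a scalar estimator $g\colon\R^n\to\R$ with integrated risk $O(1/n)$ for the parameter $\mu'$ under a smooth prior $\pi$ on $[0,1]$. Since projecting onto $e_1$ is $1$-Lipschitz, $|g(X)-g(X')|$ lower bounds $\|f(X)-f(X')\|$ whenever $X,X'$ share their orthogonal parts. It therefore suffices to exhibit, for $\mu'$ drawn from a suitable prior, a coupling of $X\sim\mathcal N(\mu',1)^{\otimes n}$ and $X'\sim\mathcal N(\mu'+c\eta,1)^{\otimes n}$ with two properties:
\begin{itemize}
\item[(i)] $\Hamm(X,X')\le k$ with probability at least $1-n^{-2}$;
\item[(ii)] $\E|g(X')-g(X)|\gtrsim\eta$.
\end{itemize}
Indeed, on the event in (i), $S_\eta^g(X)\ge|g(X)-g(X')|$. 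We then get
\[\E[S_\eta^g(X)^2]^{1/2}\ge \E\big[|g(X)-g(X')|\mathbf 1_{\Hamm\le k}\big],\]
and the complement event can be handled after clipping $g$ to a bounded interval, which costs only $O(1/n)$ in risk. Taking the supremum over $\mu$, which dominates the average over the prior, gives the claim.

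\textbf{The coupling.} We couple each coordinate independently using the maximal coupling of $\mathcal N(\mu',1)$ and $\mathcal N(\mu'+c\eta,1)$. The coordinates disagree independently with probability
\[p=\mathrm{TV}=2\Phi(c\eta/2)-1\le c\eta/\sqrt{2\pi}.\]
The number of disagreements is therefore $\Bin(n,p)$ with mean at most $c\eta n/2.5$. Choosing the absolute constant $c\le 1$ small, a Chernoff bound gives $\Pr[\Bin(n,p)>k]\le \exp(-\Omega(k))\le n^{-2}$. This uses the hypothesis $k\ge C_{\mathsf{high}}\log n$, and also $\eta n\ge k$ so that the mean is a constant fraction below $k$. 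This is exactly where the lower restriction on $\eta$ enters. The restriction $\eta<1/10$ keeps $c\eta$ small, so the linear TV bound applies and the shifted prior stays inside the domain.

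\textbf{Displacement, the main obstacle.} We must show $\E_{\mu'\sim\pi}\big[m(\mu'+c\eta)-m(\mu')\big]\gtrsim\eta$, where $m(\theta)\coloneqq\E_\theta g$. Accuracy alone gives only $|m(\theta)-\theta|\lesssim 1/\sqrt n$ pointwise. That suffices when $\eta\gg1/\sqrt n$, but not for $\eta$ down to $\log n/n$. I would instead use the van Trees/Bayesian Cramér--Rao argument already used in the low-corruption regime, namely that low integrated risk forces $\E_\pi[m'(\theta)]\ge 1-O(1/\sqrt n)$ under a smooth prior. Averaging $m(\theta+c\eta)-m(\theta)=\int_0^{c\eta} m'(\theta+s)\,ds$ over $\theta\sim\pi$ then gives $c\eta\,(1-o(1))$, up to a boundary correction from shifting the prior of size $O(\eta\cdot\sup|\pi'|/\text{mass})$. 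That correction is handled by using a prior whose derivative integrates to a small error, for example a wide smooth bump; the clipping then becomes a wide-interval clip. Since $m$ is smooth (it is a Gaussian convolution), the derivative identity
\[m'(\theta)=\E_\theta\Big[g\sum_i(X_i-\theta)\Big]\]
is available. Finally, we combine (ii) with (i) via Cauchy--Schwarz, bounding the error term by $\sqrt{\E g^2}\cdot n^{-1}\ll\eta$, to conclude $\ES_{\eta,2}\ge C\eta$.
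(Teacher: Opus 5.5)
\textbf{The reduction needs one fix.} You cannot take the direction to be a fixed $e_1$. Uniform accuracy controls only the total error $C_{\mathsf{MSE}}d/n$, so a single coordinate may carry error of order $d/n$. For example, take $f(X)=\overline{X}+s\,\mathrm{sign}(X_{1,1}-X_{2,1})\,e_1$ with $s^2=(C_{\mathsf{MSE}}-1)d/n$, where $X_{i,1}$ is the first coordinate of sample $i$. The cross term vanishes by exchanging samples $1$ and $2$, so $f$ is $(C_{\mathsf{MSE}},d/n)$-accurate. Yet its first coordinate has risk $1/n+s^2$, which is of order $d/n$.

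With that direction, your scalar estimator does not have integrated risk $O(1/n)$. Your displacement bound then degrades to $t\bigl(1-O(\sqrt{d/n})\bigr)$, which is vacuous once $d\gtrsim n$, so the threshold on $n$ would depend on $d$. The fix is to choose the direction by averaging, as in \cref{lem:good-section}. For $U$ uniform on $\mathbb{S}^{d-1}$ we have $\E_U\langle U,z\rangle^2=\|z\|^2/d$, so some $u$ has prior-averaged projected risk at most $C_{\mathsf{MSE}}/n$. Picking the best coordinate under the prior works equally well. With this change your argument goes through.

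\textbf{Comparison with the paper.} Your coupling step is the same as the paper's; the displacement step is where you take a different route. The paper uses the uniform prior on $[0,1-\eta]$ and telescopes:
\[
\int_0^{1-\eta}\bigl(a(\mu'+\eta)-a(\mu')\bigr)\,d\mu'=\int_{1-\eta}^{1}a(\mu')\,d\mu'-\int_0^{\eta}a(\mu')\,d\mu'.
\]
Only the mean response on two endpoint intervals of length $\eta$ enters. On each, the averaged risk is at most $C_{\mathsf{int}}/(\eta n)\le 1/100$; this uses $\eta n\ge k\ge C_{\mathsf{high}}\log n\ge 100C_{\mathsf{int}}$. That forces an average displacement of at least $\eta/3$. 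The failure event is then absorbed as $\mathbb{P}(D>k)\le\eta/6$ using $h\in[0,1]$.

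So the obstacle you identify (bias of order $1/\sqrt n$ against a shift of order $\eta$) disappears as soon as you average over $\mu'$, even with a flat prior. Your smooth-prior version is really the change-of-variables identity from the paper's low-corruption proof:
\[
\int\pi(\mu')\bigl(a(\mu'+t)-a(\mu')\bigr)\,d\mu'=t+\int b(v)\bigl(\pi(v-t)-\pi(v)\bigr)\,dv\ge t\Bigl(1-\|\pi'\|_{L^2}\sqrt{C_{\mathsf{int}}/n}\Bigr).
\]
This gives the sharper $c\eta\bigl(1-O(n^{-1/2})\bigr)$ and needs no condition on $\eta n$ in this step. The price is a $C^1$ compactly supported prior, whereas the paper needs only the uniform measure. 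It also needs neither van Trees nor differentiability of $m$.

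There is no separate boundary term of order $\eta\sup|\pi'|$ in your route; such a term would be the same order as your main term. The only error is $O(t/\sqrt n)$, so the wide-bump workaround is unnecessary. Your Cauchy--Schwarz bound of order $n^{-1}\ll\eta$ on the failure event is correct and interchangeable with the paper's cruder bound.
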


\begin{proofsketch}
We establish the high-corruption lower bound through a coupling argument. As previously stated, we may assume that $\mu' \in [0,1]$ and thus it suffices to analyze a clipped scalar estimator $h \colon \mathbb{R}^n \to [0,1]$. 
\begin{enumerate}
    \item We exploit the estimator's required accuracy. Because $h$ maintains $O(1/n)$ integrated risk over $[0,1]$, its expected output must track the true parameter at the boundaries of the interval. We show that evaluating the estimator across the interval forces its mean response to increase by $\Omega(\eta)$ on average when the underlying distribution is shifted by $\eta$. 
    \item We construct our adversary using coordinate-wise maximal coupling as in \cref{thm:maximal-coupling}. For any parameter $\mu'$, we couple samples from $\mathcal{N}(\mu', 1)^{\otimes n}$ and $\mathcal{N}(\mu' + \eta, 1)^{\otimes n}$. The expected number of different coordinates will be governed by the TV distance between their Gaussian marginals, which is strictly less than $\eta$. Therefore, for $k = \Omega(\log n)$, a Chernoff bound (\cref{thm:chernoff}) can guarantee that the samples differ by at most $k$ points with high probability. Averaging the displacement across the parameter interval establishes the $\Omega(\eta)$ empirical sensitivity lower bound. 
\end{enumerate}

The complete proof can be found in \cref{app:mean-high}. 
\end{proofsketch}

\subsection{Variance obstruction}
The variance obstruction is rooted in a fundamental statistical trade-off: any estimator achieving uniform MSE accuracy must exhibit an output variance of order $\Omega(d/n)$ at some parameter point. An adversary can thus exploit this inherent variability, leveraging the estimator's own structural fluctuations to induce empirical sensitivity. This is made precise by the following theorem. 

\begin{theorem}[Variance obstruction]
\label{thm:variance-main}
Let $f \colon (\mathbb{R}^d)^n \to \mathbb{R}^d$ be measurable and $\left(C_{\mathsf{MSE}}, \frac{d}{n}\right)$-uniformly MSE-accurate for the class of Gaussians $\{\mathcal{N}(\mu, \mathbb{I})\}_{\mu \in \R^d}$. There is a universal constant $\useconstant{const:varLower} > 0$ such that for every $\eta \in (0,1)$, we have
\[ \sup_{\mu \in \mathbb{R}^d} \ES_{\eta, 2}(f; \mu) \ge \useconstant{const:varLower} \sqrt{\frac{\eta d}{n}} .\]

\end{theorem}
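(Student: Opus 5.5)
Two steps: (i) show that uniform MSE accuracy forces output variance $\Omega(d/n)$ at some parameter; (ii) convert that variance into sensitivity to replacing $k=\lfloor \eta n\rfloor$ samples, via a block version of the Efron--Stein inequality. Only the resampling adversary is needed, and the resampled dataset is a valid $Y$ with $\Hamm(X,Y)\le k$.

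\textbf{Step 1: a variance lower bound.} I claim there is $\mu_0$ with
\[
\E\norm{f(X)-\E f(X)}^2 \ge c\,\frac{d}{n}
\]
under $\mathcal{N}(\mu_0,\mathbb{I})^{\otimes n}$. Place a prior $\mu\sim\mathcal{N}(0,\tau^2\mathbb{I})$ with $\tau^2$ a constant multiple of $d/n$, or larger. Write $m(\mu)=\E_\mu f$ and use the law of total variance:
\[
\E_\mu\norm{f-\mu}^2=\Var_\mu(f)+\norm{m(\mu)-\mu}^2 .
\]
Suppose for contradiction that $\Var_\mu(f)\le \epsilon d/n$ for every $\mu$. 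Then $f$ is essentially a function of the data concentrated near the deterministic map $m$. The Bayes risk of any estimator under this prior is at least $d\,\tau^2/(n\tau^2+1)\asymp d/n$. A cleaner route is the multivariate Cramér--Rao / van Trees inequality:
\[
\E_\pi \Var_\mu(f)\ \ge\ \frac{\bigl(\E_\pi \operatorname{tr} \nabla m(\mu)\bigr)^2}{n d+\text{prior Fisher information}} .
\]
Uniform accuracy forces $m(\mu)\approx\mu$ on scale $\sqrt{d/n}$, so the prior-averaged trace of $\nabla m$ is $\gtrsim d$. Concretely, integrate by parts against a smooth prior on a box or ball of radius $R\gg \sqrt{C_{\mathsf{MSE}}d/n}$; the bias is at most $\sqrt{C_{\mathsf{MSE}}d/n}$, so the divergence theorem gives $\E\operatorname{div} m\ge d(1-O(\sqrt{C_{\mathsf{MSE}} d/n}/R))$. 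The result is $\E_\pi\Var_\mu(f)\gtrsim d^2/(nd)=d/n$, and hence some $\mu_0$ attains it. Variance here means the trace of the covariance.

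\textbf{Step 2: block Efron--Stein.} Fix $\mu_0$ and partition $[n]$ into $m=\lceil n/k\rceil$ blocks $B_1,\dots,B_m$, each of size at most $k$. Treat $X$ as a function of the $m$ independent block variables $X_{B_j}$. The vector Efron--Stein inequality (coordinatewise Efron--Stein, summed over coordinates) gives
\[
\Var(f)\le \tfrac12\sum_{j=1}^m \E\norm{f(X)-f(X^{(j)})}^2 ,
\]
where $X^{(j)}$ resamples block $B_j$ independently. Since $\Hamm(X,X^{(j)})\le k$, each term satisfies $\norm{f(X)-f(X^{(j)})}\le S^f_\eta(X)$, so each summand is at most $\ES_{\eta,2}(f;\mu_0)^2$. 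Therefore
\[
c\,\frac{d}{n}\le \frac{m}{2}\,\ES_{\eta,2}^2\le \frac{n}{k}\,\ES_{\eta,2}^2 ,
\]
using $m\le 2n/k$ for $k\ge1$. This gives
\[
\ES_{\eta,2}\ge\sqrt{\frac{ckd}{n^2}}\gtrsim\sqrt{\frac{\eta d}{n}},
\]
since $k\ge \eta n/2$ when $\eta n\ge1$. If $\eta n<1$ then $k=0$ and the statement is vacuous by the paper's convention.

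\textbf{Main obstacle.} I expect Step 1 to be the hard part, and in particular getting a constant that is universal, as the statement asks. The natural bound depends on $C_{\mathsf{MSE}}$ through the choice of radius $R$. The regularity needed for van Trees is fine, since $m$ is smooth because Gaussian expectations are analytic in $\mu$ when $f\in L^2$. The dependence on $C_{\mathsf{MSE}}$ can be pushed away by taking $R$ large: the divergence bound then approaches $d$ exactly, while the prior Fisher term $\asymp d/R^2$ becomes negligible against $nd$. This yields $\E\Var\ge(1-o(1))\,d/n$, so $c_{\ref{const:varLower}}$ is independent of $C_{\mathsf{MSE}}$, which is what I will aim to verify.
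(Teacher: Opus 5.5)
Your proof is correct. Step~2 is exactly the paper's argument: blocks of size at most $k$, vector Efron--Stein over the $\lceil n/k\rceil$ independent blocks, the pointwise domination $\norm{f(X)-f(X^{(j)})}\le S^f_\eta(X)$, and $k\ge \eta n/2$. Step~1 takes a different route.

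\textbf{The paper's Step~1.} It first proves a scalar statement in one of two ways. The first is Cram\'er--Rao, which the paper presents as needing an extra regularity assumption on the mean map. The second avoids that assumption: Hammersley--Chapman--Robbins between adjacent points of a grid of spacing $1/\sqrt n$, followed by Cauchy--Schwarz over the grid. The paper then lifts the scalar bound to $\R^d$ with a separate lemma: fix $\mu_{-j}$, treat the remaining coordinates as auxiliary randomness $W$, average over a product prior, and sum over $j$.

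\textbf{Your Step~1.} You work directly in $\R^d$ with the trace Cram\'er--Rao bound $\Var_\mu(f)\ge(\operatorname{div} m(\mu))^2/(nd)$. You then integrate $\operatorname{div} m=d+\operatorname{div} b$ by parts against a compactly supported $C^1$ prior. The uniform bias bound $\norm{b}\le\sqrt{C_{\mathsf{MSE}}d/n}$ makes $\int\langle\nabla\pi,b\rangle$ negligible once $R$ is large. This is the $d$-dimensional version of the paper's Proof~I.

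\textbf{What your route buys.}
\begin{itemize}
\item No lifting lemma is needed.
\item No regularity hypothesis is needed. As you note, $f\in L^1(P_\mu)$ for every $\mu$ makes $m$ real-analytic, and the score identity follows by differentiating under the integral. So the paper's Cram\'er--Rao route is in fact unconditional for Gaussians.
\item You get the essentially sharp conclusion $\sup_\mu\Var_\mu(f)\ge(1-o(1))\,d/n$, versus the paper's constant $\frac{2}{15(e-1)}$.
\end{itemize}
The HCR route's advantage is that it uses no derivatives at all, which only matters beyond the Gaussian location family. The paper's constant is also universal, since its grid spacing is $1/\sqrt n$ regardless of $C_{\mathsf{MSE}}$.

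\textbf{Clean-ups.}
\begin{itemize}
\item The prior Fisher information in your denominator is unnecessary. Because you center at $m(\mu)$ rather than $\mu$, the prior-score term has zero expectation. The pointwise bound plus Jensen already gives $\E_\pi\Var_\mu(f)\ge(\E_\pi\operatorname{div} m)^2/(nd)$.
\item Drop the opening Bayes-risk contradiction sketch. When $C_{\mathsf{MSE}}\ge 1$, the squared bias alone can account for the $\asymp d/n$ Bayes risk, so small variance yields no contradiction there. The divergence argument is what actually does the work.
\item You use $m$ both for the mean map and for the number of blocks; rename one of them.
\end{itemize}
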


\begin{proofsketch}
The proof of the variance obstruction proceeds in three stages: establishing a scalar variance bound, lifting it to high dimensions, and converting variance into empirical sensitivity.
\begin{enumerate} 
\item First, we show that any one-dimensional uniformly MSE-accurate estimator must have output variance $\Omega(1/n)$ at some parameter. Assume that the one-dimensional estimator is $g$ with parameter $\mu'$. Let $a(\mu') \coloneqq \mathbb{E}_X[g(X)]$. Uniform MSE accuracy thus implies $|a(\mu') - \mu'| \lesssim n^{-1/2}$. We consider an interval of large length which we assume the parameter $\mu'$ lies in, and we partition this interval such that each part is roughly of size $n^{-1/2}$, i.e., $\mu'_j = jn^{-1/2}$. Then $a(\mu')$ must move by a constant fraction of the interval length across the grid. For adjacent grid points, the two distributions $\mathcal{N}(\mu'_j, 1)^{\otimes n}$ and $\mathcal{N}(\mu'_{j+1}, 1)^{\otimes n}$ have bounded $\chi^2$-divergence. One can then use Hammersley-Chapman-Robbins inequality (\cref{thm:hcr}) to get
\[ \Var_{\mu'_j}(g) \gtrsim (a(\mu'_{j+1}) - a(\mu'_j))^2. \]
Summing over $j$ and applying Cauchy-Schwarz forces the average scalar variance over the grid to be $\Omega(1/n)$. 

\item Second, we lift this scalar variance lower bound to $d$ dimensions by applying this one-dimensional finite-difference argument along each coordinate direction, with all other mean coordinates fixed on a common finite grid. Doing this give us some parameter $\mu^{\ast}$ such that $\Var_{\mu^{\ast}}(f(X)) \gtrsim \frac{d}{n}$. 

\item Finally, we convert this clean-sample variance bound into empirical sensitivity by block resampling. Work on parameter $\mu^{\ast}$. Partition the sample into blocks of size $k \coloneqq \lfloor \eta n \rfloor$. Let $X^{(i)}$ be obtained from $X$ by replacing the $i$th block by an independent fresh block from the same distribution $\mathcal{N}(\mu^{\ast}, \mathbb{I}_d)$. Each $X^{(i)}$ differs from $X$ in at most $k$ samples, and so pointwise 
\[ S_{\eta}^f(X) \ge \| f(X^{(i)}) - f(X) \|. \]
Using Vector Efron-Stein Inequality (\cref{thm:efron-stein}) would gives us $\ES_{\eta, 2}(f; \mu^{\ast}) \gtrsim \sqrt{\frac{\eta d}{n}}$, as desired. 
\end{enumerate} 

The complete proof can be found in \cref{app:variance-main}. 
\end{proofsketch}

\section{Related Bernoulli Results}
\label{sec:bernoulli}
\newconstant{const:bernoulli}
The same empirical-sensitivity question can be asked for Bernoulli mean estimation. We record a simple one-dimensional analogue of the mean obstruction. 

Indeed, for $x \in \{ 0, 1 \}^n$, we let $|x| \coloneqq \sum_{i = 1}^n x_i$ denote its Hamming weight. For a scalar estimator $f \colon \{ 0, 1 \}^n \to \mathbb{R}$, define
\[ S_\eta^f(x) \coloneqq \sup_{y \in \{ 0, 1 \}^n \colon d_H(x,y) \le \lfloor \eta n \rfloor} |f(y) - f(x)|.\]
\begin{theorem}
\label{thm:bernoulli-main}
    Let $f \colon \{ 0, 1 \}^n \to \mathbb{R}$ be measurable and suppose that
    \[ \sup_{p \in [0,1]} \mathbb{E}_{X \sim \mathsf{Bern}(p)^{\otimes n}} |f(X) - p| \le \frac{C_{\mathsf{Bern}}}{\sqrt{n}}. \]
    Then for every $\eta \in (0,1)$ such that $k = \lfloor \eta n \rfloor \ge 1$, and for all sufficiently large $n$ depending on $C_{\mathsf{Bern}}$, we have 
    \[ \sup_{p \in [0,1]} \mathbb{E}_{X \sim \mathsf{Bern}(p)^{\otimes n}} [S_\eta^f(X)] \ge \useconstant{const:bernoulli}\eta.\]
\end{theorem}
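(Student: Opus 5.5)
We prove \Cref{thm:bernoulli-main} with a coupling argument, modelled on the high-corruption proof of \cref{thm:mean-high}. The Bernoulli structure lets us couple exactly and cover every $k\ge 1$. Fix $p\in[0,1-\eta]$ and let $q=p+\eta'$ with $\eta'\coloneqq k/(2n)$, so $\eta' \asymp \eta$ when $k\ge 1$. We couple $X\sim\mathsf{Bern}(p)^{\otimes n}$ and $Y\sim\mathsf{Bern}(q)^{\otimes n}$ monotonically, coordinate by coordinate: draw $U_i\sim\Unif[0,1]$ i.i.d. and set $X_i=\mathbf 1[U_i<p]$ and $Y_i=\mathbf 1[U_i<q]$. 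Then $d_H(X,Y)\sim\Bin(n,\eta')$, whose mean is $k/2$.

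On the event $E\coloneqq\{d_H(X,Y)\le k\}$ we have $|f(Y)-f(X)|\le S_\eta^f(X)$. Hence
\[
\E[S_\eta^f(X)] \;\ge\; \E\bigl[|f(Y)-f(X)|\mathbf 1_E\bigr] \;\ge\; \bigl|\E f(Y)-\E f(X)\bigr| - \E\bigl[|f(Y)-f(X)|\mathbf 1_{E^c}\bigr].
\]
Since $f$ may be unbounded, we first replace $f$ by its clip to $[0,1]$. Clipping does not increase sensitivity, and the accuracy hypothesis still holds because $p\in[0,1]$. With this reduction the error term is at most $\Pr[E^c]$.

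When $k\ge C\log n$, Chernoff (\cref{thm:chernoff}) gives $\Pr[E^c]\le e^{-ck}\le n^{-2}$, which is negligible against $\eta$. When $k$ is small, Chernoff only gives a constant failure probability. A two-step fix handles this. First, use Markov: $\Pr[d_H>k]\le 1/2$. Second, choose $\eta' = k/(Cn)$ with $C$ a large constant so that $\Pr[E^c]\le \Pr[\Bin(n,k/(Cn))>k]$ is small. Even then the error term $\Pr[E^c]$ is a constant, not $o(\eta)$.

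The correct small-$k$ route is to bound the error by $\E\bigl[|f(Y)-f(X)|\mathbf 1_{E^c}\bigr]$ itself, chaining through intermediate parameters. Write the path from $p$ to $q$ as $p=p_0<p_1<\dots<p_m=q$ with steps $1/(Cn)$. Under the monotone coupling, consecutive datasets differ in $\Bin(n,1/(Cn))$ coordinates, which is often $0$ or $1\le k$. Rather than the general triangle inequality, I will use a direct averaging argument, described next.

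The \textbf{key step} is the averaging over $p$ that shows the mean response must move. Define $a(p)\coloneqq\E_p f$. The accuracy hypothesis gives $|a(p)-p|\le C_{\mathsf{Bern}}/\sqrt n$. Summing the shift over a grid $p_j=j\eta'$ covering $[0,1/2]$ gives
\[
\sum_j \bigl(a(p_{j+1})-a(p_j)\bigr) \ge 1/2 - 2C_{\mathsf{Bern}}/\sqrt n,
\]
so for $n$ large some $j$ (indeed on average) has $a(p_{j+1})-a(p_j)\gtrsim \eta'$. At that $j$,
\[
\E\bigl[|f(Y)-f(X)|\mathbf 1_{E}\bigr]\ge (a(p_{j+1})-a(p_j)) - \E\bigl[|f(Y)-f(X)|\mathbf 1_{E^c}\bigr].
\]
To control the bad term, I will bound $|f(Y)-f(X)|$ on $E^c$ by a telescoping path. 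Order the differing coordinates and flip them one at a time in groups of $k$. This produces intermediate datasets $Z^{(0)}=X,\dots,Z^{(r)}=Y$ with $r=\lceil d_H/k\rceil$, each consecutive pair within Hamming distance $k$. Each $Z^{(t)}$ is exactly distributed as the monotone coupling at an intermediate parameter, so $\E|f(Z^{(t+1)})-f(Z^{(t)})|\le \E_{p_t'}[S_\eta^f]\le \sup_p\E_p S^f_\eta\eqqcolon S^*$. This gives
\[
\E\bigl[|f(Y)-f(X)|\bigr]\le \E[r]\,S^*\lesssim S^*,
\]
since $\E[r]\le 1+\E[d_H]/k\le 2$ when $\eta' \le k/n$. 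Summing along the whole grid then yields $1/2-o(1)\le \sum_j \E|f(Y_j)-f(X_j)|\lesssim (1/\eta')S^*$, i.e.\ $S^*\gtrsim \eta$.

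This telescoping-through-the-grid argument sidesteps the failure event entirely and covers all $k\ge1$. The main obstacle is making it rigorous: we must verify that the intermediate datasets along the path have exactly the right marginal law. Ordering the $U_i$ values makes this work, because the intermediate datasets are then thresholds at random levels. I expect this to be handled by choosing thresholds at the order statistics of the $U_i$ falling in $[p,q)$, and then conditioning appropriately.
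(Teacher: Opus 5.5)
Your large-$k$ branch is sound. For $k\ge C\log n$, Chernoff gives $\Pr[E^c]\le n^{-2}\ll\eta'$, and averaging $a(p_{j+1})-a(p_j)$ over the grid then gives $S^*\gtrsim\eta$, as in the Gaussian high-corruption proof.

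The gap is the regime $1\le k\lesssim\log n$, and there the telescoping step fails as written. Your claim that each intermediate $Z^{(t)}$ is ``exactly distributed as the monotone coupling at an intermediate parameter'' is false. Thresholding at order statistics of the $U_i\in[p,q)$ gives $|Z^{(t)}|=N_p+tk$ with $N_p\sim\Bin(n,p)$, and $Z^{(t)}$ is uniform on its layer given its weight. Its law is therefore a mixture of layers with shifted-binomial weights, not $\mathsf{Bern}(p')^{\otimes n}$ for any $p'$; at $p_0=0$ it is uniform on $\mathsf{Layer}(tk)$. Fixed thresholds would restore product laws, but they reintroduce a random Hamming distance, which is the problem you started from.

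Comparing such a layer mixture to $S^*=\sup_{p'}\E_{p'}[S_\eta^f]$ is not free. A single product law puts mass only $O\bigl(1/\sqrt{\min(w,n-w)+1}\bigr)$ on $\mathsf{Layer}(w)$, so a point mass on $\mathsf{Layer}(tk)$ is dominated by mixtures of product laws only up to a factor $\asymp\sqrt{tk}$. Separately, the bound $\E\bigl[\sum_{t<r}S_\eta^f(Z^{(t)})\bigr]\le\E[r]\,S^*$ is a Wald-type step that ignores the dependence between $r=\lceil d_H/k\rceil$ and the $Z^{(t)}$. Nor can you bound the bad event crudely by $S_\eta^f\le 1$: for $k=1$, $\Pr[d_H\ge 2]$ is a constant per step (about $0.09$ for $\eta'=1/(2n)$), and it is summed over $\asymp n$ grid steps.

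The missing idea is to compare Hamming-weight layers instead of product measures, and to average over a uniform prior on $p$ instead of fixing $p$. Under $p\sim\Unif[0,1]$, $|X|$ is exactly uniform on $\{0,\dots,n\}$ and $X$ given $|X|=t$ is uniform on $\mathsf{Layer}(t)$ (\cref{lem:beta-binom}). Hence the prior-averaged sensitivity equals $\frac{1}{n+1}\sum_t\E[S_\eta^f(U_t)]$.

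Flipping $\ell\le k$ uniformly random zeros of $U_t$ gives a uniform element of $\mathsf{Layer}(t+\ell)$ at Hamming distance exactly $\ell$, deterministically (\cref{lem:layer-transport}). So there is no failure event for any $k\ge 1$, and $\E[S_\eta^f(U_t)]\ge|a_{t+\ell}-a_t|$ with $a_t\coloneqq\E f(U_t)$. Telescoping along residue classes mod $\ell$ reduces everything to the boundary gaps $|a_{j_r}-a_r|$. The accuracy hypothesis together with $\Pr[\Bin(n,r/n)=r]\ge c/\sqrt{r+1}$ (\cref{lem:binomial-point-mass}) forces $|a_r-r/n|\lesssim C_{\mathsf{Bern}}\sqrt{\ell/n}$ at both ends. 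This gives gaps $\ge 1/2$ once $\ell\le\alpha n$, and averaging yields $\gtrsim\ell/n\gtrsim\eta$.

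Your order-statistic thresholds already produce layer-uniform datasets. What you were missing is making the layer weights uniform, which the prior does for free.
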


\begin{proofsketch}
We will show the empirical sensitivity lower bound through a layer-telescoping style argument on the Boolean hypercube. As shown previously, clipping preserves accuracy and only decreases sensitivity, so it suffices to analyze a bounded scalar estimator $f \colon \{0,1\}^n \to [0,1]$.
\begin{enumerate}
\item We exploit the estimator's required accuracy. By decomposing the uniform prior on the Bernoulli parameter into discrete Hamming-weight layers, we can evaluate the estimator's expected output conditionally per layer. Because $f$ maintains $O(1/\sqrt{n})$ expected risk, binomial anti-concentration bounds guarantee that its expected output at widely separated boundary layers must closely track the true proportion of ones. This forces an $\Omega(\eta)$ gap in the estimator's mean between these boundaries.
\item We then distribute this global gap to local neighborhoods using a transport coupling. We couple elements between layers separated by a distance $\ell = \Theta(\eta n)$ by uniformly flipping exactly $\ell$ zero coordinates to ones, ensuring the coupled instances are strictly within the allowed Hamming radius $\lfloor \eta n \rfloor$. This then shows that the required boundary gap must be bridged by significant intermediate jumps. Averaging this forced variation across the discrete layers establishes the strict $\Omega(\eta)$ lower bound on expected empirical sensitivity.
\end{enumerate}

The complete proof can be found in \cref{app:bernoulli-main}. 
\end{proofsketch}

\bibliographystyle{alpha}
\bibliography{biblio}

\appendix 
\section{Standard Facts}
\label{app:standard-facts}

We collect the standard facts used throughout the paper.

\begin{definition}
Let $P,Q$ be probability measures on a common measurable space. If $Q \ll P$, we define the $\chi^2$-divergence by $\chi^2(Q\|P) := \mathbb{E}_P\left[\left(\frac{dQ}{dP}-1\right)^2\right]$. The total variation distance is defined by $\operatorname{TV}(P,Q) := \sup_A |P(A)-Q(A)|$. 
\end{definition}

\begin{definition}
\label{def:scalar-gaussian-notation}
Let $g: \R^n\to\R$ be a measurable scalar estimator. For each of the following, $\mu'\in\R$ will be fixed and we assume the data is drawn as $X=(X_1,\dots,X_n) \sim \mathcal N(\mu',1)^{\otimes n}$.
We use the following notation throughout:
\begin{itemize}
    \item $a(\mu') := \mathbb{E}_X[g(X)]$ is the mean of the estimator. 
    \item $b(\mu') := a(\mu') - \mu'$ is the bias of the estimator. 
\end{itemize}
\end{definition}

\begin{theorem}[Efron--Stein inequality]
\label{thm:efron-stein}
Suppose that $X_1,\dots,X_n,X_1',\dots,X_n'$ are independent, with $X_i'$ distributed identically to $X_i$ for every $i$. Let $X=(X_1,\dots,X_n)$ and 
\[ X^{(i)}=(X_1,\dots,X_{i-1},X_i',X_{i+1},\dots,X_n), \quad i = 1, \dots, n. \]  Then, for every scalar function $f$ with finite variance, \[ \Var(f(X)) \le \frac12\sum_{i=1}^n \mathbb{E}\left[(f(X)-f(X^{(i)}))^2\right]. \]  More generally, if $f$ is vector-valued and $\mathbb{E}\|f(X)\|_2^2<\infty$, then 
\[ \mathbb{E}\left[\|f(X)-\mathbb{E} f(X)\|_2^2\right] \le \frac12\sum_{i=1}^n \mathbb{E}\left[\|f(X)-f(X^{(i)})\|_2^2\right]. \] 
\end{theorem}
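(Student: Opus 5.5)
The statement is the classical Efron--Stein inequality. I will prove the scalar version with a martingale (Doob) decomposition and then obtain the vector version by summing over coordinates. Throughout, assume $\E f(X)^2<\infty$ (respectively $\E\|f(X)\|_2^2<\infty$).

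\emph{Martingale decomposition.} For $0\le i\le n$ let $\E_i[\cdot] \coloneqq \E[\,\cdot \mid X_1,\dots,X_i]$, so that $\E_0 f = \E f(X)$ and $\E_n f = f(X)$. Define the increments $\Delta_i \coloneqq \E_i f - \E_{i-1} f$. They are martingale differences and are square integrable, so for $i<j$ we have $\E[\Delta_i\Delta_j] = \E[\Delta_i\,\E_{j-1}\Delta_j]=0$. Telescoping then gives
\[ \Var(f(X)) = \E\Bigl[\Bigl(\sum_{i=1}^n \Delta_i\Bigr)^2\Bigr] = \sum_{i=1}^n \E[\Delta_i^2]. \]

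\emph{Bounding each increment.} Let $\E^{(i)}$ denote integration over $X_i$ alone, with all other coordinates held fixed. Because the coordinates are independent, $\E^{(i)} f$ does not depend on $X_i$, and hence $\E_i[\E^{(i)} f] = \E_{i-1} f$. It follows that $\Delta_i = \E_i\bigl[f - \E^{(i)} f\bigr]$. By conditional Jensen,
\[ \E[\Delta_i^2] \le \E\bigl[(f - \E^{(i)} f)^2\bigr] = \E\bigl[\Var^{(i)}(f)\bigr], \]
where $\Var^{(i)}$ is the variance in $X_i$ alone. Conditionally on $(X_j)_{j\ne i}$, the variables $X_i$ and $X_i'$ are i.i.d. So by the elementary identity $\Var(Z)=\tfrac12\E(Z-Z')^2$ for an independent copy $Z'$,
\[ \E\bigl[\Var^{(i)}(f)\bigr] = \tfrac12\,\E\bigl[(f(X)-f(X^{(i)}))^2\bigr]. \]
Summing over $i$ gives the scalar inequality.

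\emph{Vector version.} Write $f=(f_1,\dots,f_d)$. Each $f_j$ has finite second moment, since $\E\|f\|_2^2<\infty$. Applying the scalar inequality to each $f_j$ and summing over $j$ uses the identities
\[ \E\|f-\E f\|_2^2=\sum_j \Var(f_j), \qquad \|f(X)-f(X^{(i)})\|_2^2=\sum_j (f_j(X)-f_j(X^{(i)}))^2, \]
and these yield the stated bound immediately.

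The only delicate step is the identity $\E_i[\E^{(i)} f]=\E_{i-1} f$. It relies on independence of the $X_j$ together with Fubini, applied to a function that is integrable because $f$ has a finite second moment. Everything else in the argument is routine.
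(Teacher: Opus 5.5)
Your proof is correct. It is the standard Doob-martingale proof of Efron--Stein, and each step goes through under the finite-second-moment assumption: the orthogonality of the increments, the identity $\E_i[\E^{(i)} f]=\E_{i-1} f$ via independence and Fubini, conditional Jensen, and $\Var(Z)=\tfrac12\E(Z-Z')^2$ applied conditionally on $(X_j)_{j\ne i}$. The coordinatewise reduction then gives the vector case.

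The paper itself gives no proof of this statement. It is listed among the standard facts in Appendix~\ref{app:standard-facts} and used as a black box, so there is no competing argument to compare against.

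One point is worth making explicit: nothing in your argument uses that the $X_i$ are real- or $\R^d$-valued. It works verbatim for independent random elements of arbitrary measurable spaces. That generality is exactly what the paper needs. In Appendix~\ref{app:variance-corr} it applies the vector inequality with the ``coordinates'' being the blocks $Y_1,\dots,Y_M$, each a tuple of samples, and obtains each $X^{(i)}$ by resampling a whole block.

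A small wording fix: $\E^{(i)} f$ is free of $X_i$ by construction. What independence buys you is that $\E^{(i)} f$ is a version of $\E[f\mid (X_j)_{j\ne i}]$, and that the identity $\E_i[\E^{(i)} f]=\E_{i-1} f$ holds. The latter requires a direct Fubini computation rather than following from non-dependence on $X_i$ alone. Your closing remark already identifies this correctly.
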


\begin{theorem}[Hammersley--Chapman--Robbins inequality]
\label{thm:hcr}
Let $P,Q$ be probability measures with $Q\ll P$ and $\chi^2(Q\|P)<\infty$. Let $T$ be a statistic such that $T\in L^2(P)$ and $T\in L^1(Q)$. Then \[ \Var_P(T) \ge \frac{\left(\mathbb{E}_Q T-\mathbb{E}_P T\right)^2}{\chi^2(Q\|P)}. \] 
\end{theorem}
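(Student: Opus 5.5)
The plan is a direct Cauchy--Schwarz argument in $L^2(P)$ using the likelihood ratio. Set $L \coloneqq \frac{dQ}{dP}$, which exists because $Q \ll P$. The hypothesis $\chi^2(Q\|P) < \infty$ says exactly that $L - 1 \in L^2(P)$, with $\chi^2(Q\|P) = \E_P[(L-1)^2]$. The idea is to write the mean shift $\E_Q T - \E_P T$ as an inner product in $L^2(P)$ between the centered statistic $T - \E_P T$ and $L - 1$, and then apply Cauchy--Schwarz.

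The first step is the change of measure. Since $T \in L^1(Q)$ and $Q \ll P$, we have $\E_Q T = \E_P[TL]$, which is a legitimate identity and not merely a formal one. Since $T \in L^2(P)$, we also have $T \in L^1(P)$, and $\E_P[L] = 1$. Hence
\[
\E_Q T - \E_P T = \E_P[T(L-1)] .
\]
The product $T(L-1)$ is $P$-integrable by Cauchy--Schwarz, since both factors lie in $L^2(P)$. This makes the subtraction above well defined.

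The second step is centering. Because $\E_P[L-1] = 0$ and $\E_P T$ is a finite constant, we can subtract it inside the expectation without changing anything:
\[
\E_Q T - \E_P T = \E_P\bigl[(T - \E_P T)(L-1)\bigr].
\]
Applying Cauchy--Schwarz in $L^2(P)$ then gives
\[
\left(\E_Q T - \E_P T\right)^2 \le \E_P\bigl[(T-\E_P T)^2\bigr]\,\E_P\bigl[(L-1)^2\bigr] = \Var_P(T)\,\chi^2(Q\|P).
\]
Dividing by $\chi^2(Q\|P)$ yields the claim whenever this quantity is positive. If $\chi^2(Q\|P) = 0$, then $L = 1$ holds $P$-almost surely, so $Q = P$. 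The numerator then vanishes, and the inequality holds under the usual convention that $0/0$ imposes no constraint.

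There is no real obstacle in this argument. The only points requiring care are the integrability bookkeeping: justifying $\E_Q T = \E_P[TL]$, and checking that $T(L-1) \in L^1(P)$ so that the linear manipulations are valid. Both follow from the stated hypotheses as described above.
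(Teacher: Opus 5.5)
Your argument is correct and complete. The paper records the Hammersley--Chapman--Robbins inequality as a standard fact without proof, and your likelihood-ratio Cauchy--Schwarz argument in $L^2(P)$ is the standard proof. It is the same manipulation the paper itself carries out in the proof of \cref{cor:chi-bound}, with $h - (\mu'+\eta\delta)$ in place of your centered statistic. As a minor aside, your bookkeeping also shows that the hypothesis $T \in L^1(Q)$ is redundant, since $T, L \in L^2(P)$ already gives $TL \in L^1(P)$.
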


\begin{theorem}[Cramér--Rao inequality]
\label{thm:cramer-rao}
Let $\{P_{\mu'}\}_{\mu'\in\Theta}$ be a one-dimensional parametric family with densities $p_{\mu'}$. Assume the family and a given statistic $T(X)$ satisfy the following regularity conditions:
\begin{itemize}
    \item The support $\{x : p_{\mu'}(x) > 0\}$ is independent of the parameter $\mu'$.
    \item The score function $s_{\mu'}(X) \coloneqq \frac{\partial}{\partial \mu'} \log p_{\mu'}(X)$ exists almost everywhere.
    \item The Fisher information $I(\mu') \coloneqq \mathbb{E}_{\mu'}[s_{\mu'}(X)^2]$ satisfies $0 < I(\mu') < \infty$.
    \item The statistic $T$ has finite variance, its mean response $m_T(\mu') \coloneqq \mathbb{E}_{\mu'}[T]$ is differentiable, and it satisfies the score identity $m_T'(\mu') = \mathbb{E}_{\mu'}[T(X)s_{\mu'}(X)]$.
\end{itemize}
Then, the variance of the statistic is lower bounded by
\[ \Var_{\mu'}(T) \ge \frac{(m_T'(\mu'))^2}{I(\mu')}. \]
In particular, if $X=(X_1,\dots,X_n)\sim \mathcal N(\mu',1)^{\otimes n}$, then $I(\mu')=n$, and hence $\Var_{\mu'}(T) \ge \frac{(m_T'(\mu'))^2}{n}$.
\end{theorem}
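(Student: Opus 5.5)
The plan is the classical covariance argument: write $m_T'(\mu')$ as a covariance between $T$ and the score, then apply Cauchy--Schwarz. The only ingredient beyond the stated hypotheses is that the score has mean zero.

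\emph{Step 1: the score is centred.} I would show $\mathbb{E}_{\mu'}[s_{\mu'}(X)]=0$. On the common support, $s_{\mu'}\,p_{\mu'}=\partial_{\mu'}p_{\mu'}$, so $\mathbb{E}_{\mu'}[s_{\mu'}]=\int \partial_{\mu'}p_{\mu'}$. Differentiating the identity $\int p_{\mu'}=1$ under the integral sign gives zero. Equivalently, this is the score identity applied to the constant statistic $T\equiv 1$, whose mean response is constant.

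\emph{Step 2: covariance and Cauchy--Schwarz.} Since $s_{\mu'}$ is centred, the score identity can be rewritten as
\[
m_T'(\mu')=\mathbb{E}_{\mu'}[T s_{\mu'}]=\mathbb{E}_{\mu'}\bigl[(T-m_T(\mu'))\,s_{\mu'}\bigr].
\]
Cauchy--Schwarz, using that $T$ has finite variance and $I(\mu')<\infty$, gives
\[
(m_T'(\mu'))^2\le \Var_{\mu'}(T)\,\mathbb{E}_{\mu'}[s_{\mu'}^2]=\Var_{\mu'}(T)\,I(\mu').
\]
Dividing by $I(\mu')>0$ yields the stated inequality.

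\emph{Step 3: the Gaussian case.} For $X\sim\mathcal N(\mu',1)^{\otimes n}$ we have $\log p_{\mu'}(x)=-\tfrac12\sum_i (x_i-\mu')^2+\text{const}$. Hence $s_{\mu'}(X)=\sum_{i=1}^n (X_i-\mu')$, a sum of $n$ independent standard normals, and $I(\mu')=n$. The support is all of $\R^n$ for every $\mu'$, so the support condition holds.

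I would also check that the score identity (and $\mathbb{E}s=0$) actually hold for every finite-variance $T$ in this family, so the Gaussian corollary applies without extra assumptions. Differentiability of $m_T$ and differentiation under the integral follow by dominated convergence. For $|h|\le 1$, the difference quotient $(p_{\mu'+h}-p_{\mu'})/h$ is dominated by $C\,(1+\|x-\mu'\mathbf 1\|_1)\,e^{c\|x-\mu'\mathbf 1\|_1}\,p_{\mu'}(x)$. This function is square-integrable against $p_{\mu'}$, so multiplying by $T\in L^2(p_{\mu'})$ and applying Cauchy--Schwarz gives an integrable dominating function.

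This interchange of derivative and integral is the main technical obstacle. It is the only place where real analysis beyond Cauchy--Schwarz is needed, and in the abstract statement it is exactly what the score-identity hypothesis supplies.
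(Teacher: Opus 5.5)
The paper states this as a standard fact without proof, and your covariance-plus-Cauchy--Schwarz argument is the textbook one. Steps 2 and 3 are fine as written. The gap is Step 1 in the abstract part.

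\textbf{Step 1 is not covered by the hypotheses.} The hypotheses grant the score identity only for the \emph{given} statistic $T$, not for the constant statistic $1$. Nothing in the list lets you differentiate $\int p_{\mu'}=1$ under the integral sign. So your closing remark, that the score-identity hypothesis ``supplies'' the needed interchange, is true for $T$ but not for Step 1.

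\textbf{This cannot be repaired by a better argument.} The identity $\E_{\mu'}[s_{\mu'}]=0$ does not follow from the listed conditions, and without it the conclusion can fail. For example:
\begin{itemize}
\item Let $q>0$ be a density on $\R$ with tails $\asymp |x|^{-3/2}$. Let $\beta$ be a unit-mass bump on $[-1,1]$, and let $c_s$ be the uniform density on $[1/s,2/s]$.
\item For small $|\theta|$, set $p_\theta=(1-\theta)q+\theta\,\beta(\cdot-1/\theta)$ if $\theta>0$, $p_0=q$, and $p_\theta=(1+|\theta|)q-|\theta|\,c_{|\theta|}$ if $\theta<0$. The last density is positive because of the heavy tail: $|\theta|c_{|\theta|}=\theta^2\ll q$ on $[1/|\theta|,2/|\theta|]$.
\item The added or removed mass escapes to infinity, so $\partial_\theta p_\theta(x)|_{\theta=0}=-q(x)$ for every $x$. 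Hence $s_0\equiv-1$, $I(0)=1$, and $\E_0[s_0]=-1\neq 0$.
\item Take $T=\mathbf 1_{[-R,R]}$ and $\pi:=q([-R,R])$. Near $0$ we have $m_T(\theta)=(1-\theta)\pi$, so $m_T'(0)=-\pi=\E_0[T s_0]$. Every listed hypothesis therefore holds at $\theta=0$.
\item Yet $\Var_0(T)=\pi(1-\pi)<\pi^2=(m_T'(0))^2/I(0)$ as soon as $\pi>1/2$.
\end{itemize}
So the statement is missing the hypothesis $\E_{\mu'}[s_{\mu'}(X)]=0$ (equivalently, the score identity for constants). Your proof silently adds it; it should be listed as an explicit assumption, not derived.

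\textbf{The Gaussian corollary is unaffected.} This is the only case the paper uses (Proof I of the scalar variance bound). There, your dominated-convergence bound justifies both interchanges, for $T$ and for $T\equiv 1$, at any $\mu'$ with $T\in L^2(P_{\mu'})$. So Step 1 is genuinely proved in that case, and the corollary needs no extra regularity.

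Your argument in fact gives slightly more than the paper assumes. Uniform MSE accuracy puts $g$ in $L^2(P_{\mu'})$ for every $\mu'$. Your argument then makes $a$ differentiable everywhere with $|a'(\mu')|\le\sqrt{n\Var_{\mu'}(g)}$, which is bounded. Hence $a$ is Lipschitz, so the ``regularity condition'' invoked in Proof I holds automatically.
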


\begin{theorem}
\label{thm:maximal-coupling}
For any two probability measures $P,Q$ on a common measurable space, there exists a coupling $(X,Y)$ with $X\sim P$ and $Y\sim Q$ such that $\mathbb{P}(X\ne Y)=\operatorname{TV}(P,Q)$. Consequently, if $(X_i,Y_i)_{i=1}^n$ are independent maximal couplings of $P$ and $Q$, then 
\[ \#\{i\in[n]:X_i\ne Y_i\} \sim \Bin(n,\operatorname{TV}(P,Q)). \] 
\end{theorem}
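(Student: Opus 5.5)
The plan is an explicit construction through densities with respect to a common dominating measure. Let $\nu \coloneqq P+Q$, and let $p = dP/d\nu$ and $q = dQ/d\nu$; both exist by Radon--Nikodym. First we verify the standard identity
\[ \operatorname{TV}(P,Q) = \int (p-q)_+\,d\nu = \int (q-p)_+\,d\nu = 1-\int \min(p,q)\,d\nu. \]
The supremum over $A$ in the definition of $\operatorname{TV}$ is attained at $A=\{p>q\}$: for any $A$, $P(A)-Q(A)=\int_A(p-q)\,d\nu\le\int(p-q)_+\,d\nu$, and symmetrically $Q(A)-P(A)\le\int(q-p)_+\,d\nu$. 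The two integrals agree because $\int(p-q)\,d\nu=0$. Write $\tau \coloneqq \operatorname{TV}(P,Q)$.

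Next we build the coupling as a two-branch mixture. If $\tau=0$, take $X=Y\sim P$. If $\tau=1$, take $X\sim P$ and $Y\sim Q$ independent; then $p$ and $q$ have disjoint supports up to $\nu$-null sets, so after modifying on null sets $X$ and $Y$ lie in disjoint sets. For $0<\tau<1$, draw an independent $B\sim\mathrm{Bern}(\tau)$.
\begin{itemize}
\item On $\{B=0\}$, draw $Z$ from the probability density $\min(p,q)/(1-\tau)$ and set $X=Y=Z$.
\item On $\{B=1\}$, draw $X$ from $(p-q)_+/\tau$ and, independently, $Y$ from $(q-p)_+/\tau$.
\end{itemize}
The marginal density of $X$ is $\min(p,q)+(p-q)_+=p$, and likewise the marginal density of $Y$ is $q$. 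On $\{B=1\}$ we have $X\in\{p>q\}$ and $Y\in\{q>p\}$, which are disjoint sets, so $X\ne Y$ surely. Hence $\{X\ne Y\}=\{B=1\}$ up to a null set, and $\mathbb{P}(X\ne Y)=\tau$.

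The only subtle point is measurability of the event $\{X\ne Y\}$ on a general measurable space, where the diagonal need not be measurable. I would handle this by identifying the event with $\{B=1\}$ as above, which sidesteps the issue entirely.

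For the consequence, take $n$ independent copies $(X_i,Y_i,B_i)$ of this construction. Then $X_i\sim P$ and $Y_i\sim Q$ for each $i$, and $\mathbb{1}[X_i\ne Y_i]=B_i$ are i.i.d.\ $\mathrm{Bern}(\tau)$. Therefore $\#\{i\in[n]:X_i\ne Y_i\}=\sum_{i=1}^n B_i\sim\Bin(n,\operatorname{TV}(P,Q))$.
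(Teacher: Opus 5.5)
The paper gives no proof of this statement; it is listed in the appendix of standard facts, so there is no argument of the paper's to compare against. Your construction is the standard one and it is correct. The identity $\operatorname{TV}(P,Q)=\int(p-q)_+\,d\nu=1-\int\min(p,q)\,d\nu$, the overlap/excess mixture, and the marginal check all go through. So does the identification $\{X\ne Y\}=\{B=1\}$. You can make it exact rather than up to null sets by letting the excess components take values in $\{p>q\}$ and $\{q>p\}$ surely; both sets are measurable because $p,q$ are.

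One small point of scope. The second sentence of the statement is about \emph{any} independent maximal couplings, not only $n$ copies of your particular construction. The same one-line reasoning covers the general case. The indicators $\mathbf{1}\{X_i\ne Y_i\}$ are functions of independent pairs, so they are independent. Each equals $1$ with probability $\operatorname{TV}(P,Q)$ by maximality. Hence their sum is $\Bin(n,\operatorname{TV}(P,Q))$. In the paper's application the space is $\mathbb{R}$, where the diagonal is Borel, so measurability of these events is automatic there.
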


\begin{theorem}[Chernoff bounds]
\label{thm:chernoff}
Let $Z\sim\Bin(n,p)$ and write $\lambda=np$. Then, for every $\delta>0$, $\mathbb{P}(Z\ge (1+\delta)\lambda) \le \left(\frac{e^\delta}{(1+\delta)^{1+\delta}}\right)^\lambda$. Equivalently, for every $t>\lambda$, $\mathbb{P}(Z\ge t) \le \left(\frac{e\lambda}{t}\right)^t$. In particular, for every fixed $\rho\in(0,1)$, if $\lambda\le \rho k$, then $\mathbb{P}(Z\ge k) \le \exp(-c_\rho k)$, where $c_\rho>0$ is a constant depending only on $\rho$.
\end{theorem}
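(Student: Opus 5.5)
We prove all three statements with the exponential moment (Cramér--Chernoff) method; no earlier result of the paper is needed. Write $Z=\sum_{i=1}^n B_i$ with $B_i$ i.i.d.\ Bernoulli$(p)$. For every $s>0$, independence together with $1+x\le e^x$ gives
\[ \mathbb{E}\, e^{sZ} = \bigl(1-p+pe^s\bigr)^n = \bigl(1+p(e^s-1)\bigr)^n \le \exp\bigl(\lambda(e^s-1)\bigr). \]
By Markov's inequality applied to $e^{sZ}$, for any threshold $a$ we get $\mathbb{P}(Z\ge a)\le \exp\bigl(\lambda(e^s-1)-sa\bigr)$.

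\emph{First bound.} Take $a=(1+\delta)\lambda$ and the optimizing choice $s=\ln(1+\delta)>0$. Then $e^s-1=\delta$ and $sa=(1+\delta)\lambda\ln(1+\delta)$, so
\[ \mathbb{P}\bigl(Z\ge (1+\delta)\lambda\bigr)\le \left(\frac{e^\delta}{(1+\delta)^{1+\delta}}\right)^\lambda. \]
(If $\lambda=0$, then $Z=0$ almost surely and every claim is trivial, so assume $\lambda>0$ throughout.)

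\emph{Equivalent form.} Given $t>\lambda$, set $1+\delta=t/\lambda$, so $\delta>0$. Substituting, the right-hand side becomes
\[ e^{t-\lambda}\Bigl(\frac{\lambda}{t}\Bigr)^{t}. \]
Dropping the factor $e^{-\lambda}\le 1$ yields $\mathbb{P}(Z\ge t)\le (e\lambda/t)^t$.

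\emph{Consequence for fixed $\rho\in(0,1)$.} The crude form $(e\lambda/k)^k\le (e\rho)^k$ is only useful when $\rho<1/e$, so instead we use the sharper expression $e^{k-\lambda}(\lambda/k)^k$ with $t=k$; this is legitimate since $k\ge\lambda/\rho>\lambda$. Setting $x:=k/\lambda\ge 1/\rho>1$, its logarithm equals
\[ -k\,\varphi(x), \qquad \varphi(x):=\ln x-1+\tfrac1x. \]
We have $\varphi(1)=0$ and $\varphi'(x)=\tfrac1x-\tfrac1{x^2}>0$ for $x>1$. Hence $\varphi(x)\ge \varphi(1/\rho)=:c_\rho>0$, and therefore $\mathbb{P}(Z\ge k)\le \exp(-c_\rho k)$, with $c_\rho=\ln(1/\rho)-1+\rho$ depending only on $\rho$.

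The only step requiring a little care is this last one: one must use the form retaining the factor $e^{-\lambda}$ rather than the simplified $(e\lambda/t)^t$, and check the monotonicity of $\varphi$. The rest is routine.
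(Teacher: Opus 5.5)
Your proof is correct. The paper records this only as a standard fact without proof, and your Cram\'er--Chernoff derivation (the bound $\mathbb{E}e^{sZ}\le \exp(\lambda(e^s-1))$, then Markov's inequality, then the choice $s=\ln(1+\delta)$) is the standard argument being invoked.

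Your care in the last part is well placed. Keeping the factor $e^{-\lambda}$ yields $c_\rho=\rho-1-\ln\rho>0$ for every $\rho\in(0,1)$, whereas the crude bound $(e\rho)^k$ fails for $\rho\ge 1/e$. That range is exactly what the paper needs in the high-corruption coupling argument, where $\mathbb{E}[D]/k$ is about $1/\sqrt{2\pi}>1/e$.
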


\begin{lemma}
\label{lem:binomial-point-mass}
There exists a universal constant $c>0$ such that for every integer $n\ge 1$ and every $r\in\{0,1,\dots,n\}$, $\mathbb{P}\left( \Bin\left(n,\frac{r}{n}\right)=r \right) \ge \frac{c}{\sqrt{r+1}}$.
\end{lemma}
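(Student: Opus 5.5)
We prove \cref{lem:binomial-point-mass} directly from Stirling's formula, handling the boundary cases separately. If $r=0$ or $r=n$, then $\Bin(n,r/n)$ is a point mass at $r$, so the probability equals $1 \ge c/\sqrt{r+1}$ for any $c\le 1$. From now on assume $1\le r\le n-1$ and write $p=r/n$ and $s=n-r\ge 1$. Then
\[ \mathbb{P}\bigl(\Bin(n,p)=r\bigr)=\binom{n}{r}\left(\frac{r}{n}\right)^r\left(\frac{s}{n}\right)^{s}. \]

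\textbf{Step 1: two-sided Stirling bounds.} For all $m\ge 1$,
\[ \sqrt{2\pi m}\,(m/e)^m \le m! \le e\sqrt{m}\,(m/e)^m. \]
Apply the upper bound to $r!$ and $s!$, and the lower bound to $n!$. The exponential factors $e^{-n}$, $e^{r}$ and $e^{s}$ cancel because $r+s=n$. The powers also cancel:
\[ \frac{n^n}{r^r s^s}\cdot \frac{r^r s^s}{n^n}=1. \]
What remains is
\[ \mathbb{P}\bigl(\Bin(n,p)=r\bigr)\ \ge\ \frac{\sqrt{2\pi n}}{e^2\sqrt{r}\sqrt{s}}=\frac{\sqrt{2\pi}}{e^2}\sqrt{\frac{n}{rs}}. \]

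\textbf{Step 2: compare to $1/\sqrt{r+1}$.} Since $s\le n$, we have $n/(rs)\ge 1/r\ge 1/(r+1)$. Hence the probability is at least $\frac{\sqrt{2\pi}}{e^2}\cdot\frac{1}{\sqrt{r+1}}$.

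Combining the two cases, the lemma holds with $c=\min\{1,\sqrt{2\pi}/e^2\}=\sqrt{2\pi}/e^2$.

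\textbf{Main obstacle.} The only real care needed is to use explicit, non-asymptotic Stirling constants that are valid for every $m\ge 1$, including $m=1$, so that $c$ is truly universal. The cleanest route is to cite Robbins' bounds,
\[ \sqrt{2\pi m}\,(m/e)^m e^{1/(12m+1)} \le m! \le \sqrt{2\pi m}\,(m/e)^m e^{1/(12m)}. \]
These imply the stated inequalities because $\sqrt{2\pi}\,e^{1/12}\le e$. The endpoints $r\in\{0,n\}$ must be handled separately as above, since Stirling's formula does not apply to $0!$.
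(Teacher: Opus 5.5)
The paper lists this lemma among its standard facts and gives no proof, so there is no argument of the paper's to compare against. Your Stirling computation is a correct, self-contained justification. The algebra in Step 1 is right. It in fact gives the stronger, symmetric bound $\frac{\sqrt{2\pi}}{e^2}\sqrt{n/(r(n-r))}\ge \frac{\sqrt{2\pi}}{e^2}\cdot\frac{1}{\sqrt{\min\{r,n-r\}}}$. That version directly covers the mirrored use at $p=1-s/n$ in the proof of \cref{thm:bernoulli-main}, which the paper handles ``by the same argument,'' i.e., via the symmetry $\Bin(n,1-s/n)\overset{d}{=}n-\Bin(n,s/n)$.

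There is one factual slip in your closing remark: the inequality $\sqrt{2\pi}\,e^{1/12}\le e$ is false, since $\sqrt{2\pi}\,e^{1/12}\approx 2.7245 > e\approx 2.7183$. So Robbins' upper bound alone does not yield $m!\le e\sqrt{m}\,(m/e)^m$ at $m=1$; there it only gives $1!\le 1.0023$. The inequality you actually use is still true, and any of the following repairs it:
\begin{itemize}
\item For $m\ge 2$, Robbins gives the constant $\sqrt{2\pi}\,e^{1/24}\approx 2.613<e$, and at $m=1$ the bound holds with equality.
\item You can derive it directly from concavity of $\log$: the trapezoid rule gives $\int_1^m\log x\,dx\ge \log m!-\frac{1}{2}\log m$, hence $\log m!\le (m+\frac{1}{2})\log m-m+1$.
\item You can simply carry Robbins' constant $\sqrt{2\pi}\,e^{1/12}$ through Step 1, which yields $c=1/(\sqrt{2\pi}\,e^{1/6})\approx 0.338$ instead of $\sqrt{2\pi}/e^2\approx 0.339$.
\end{itemize}
With any of these, the proof is complete.
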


\section{Empirical Sensitivity of Median}
\label{sec:emp-median}
Throughout this section, we assume for simplicity that $n$ is odd. Write $n = 2m - 1$ and for $x = (x_1, \dots, x_n) \in \mathbb{R}^n$, let $x_{(1)} \le \dots \le x_{(n)}$ denote the order statistics. Therefore, $\operatorname{med}(x) \coloneqq x_{(m)}$ and
\[ S_\eta^{\mathrm{med}}(x) \coloneqq \sup_{y \in \mathbb{R}^n: d_H(x,y) \le k} |\mathrm{med}(x) - \mathrm{med}(y)|\]
where $k \coloneqq \lfloor \eta n \rfloor$. 

\begin{lemma}\label{lem:median-bound}
Let $0 \le k \le m - 1$. Then, for every $x \in \mathbb{R}^n$, 
\[ S_\eta^{\operatorname{med}}(x) \le \max \{ x_{(m+k)} - x_{(m)}, x_{(m)} - x_{(m-k)} \}.\]
\end{lemma}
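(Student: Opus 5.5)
The plan is to show that changing at most $k$ coordinates moves every order statistic by at most $k$ ranks; that is, for any $y$ with $d_H(x,y)\le k$, we aim for
\[ x_{(j-k)} \le y_{(j)} \le x_{(j+k)} \qquad \text{for all } j \text{ with } k< j \le n-k. \]
Applied at $j=m$, which is allowed since $k\le m-1$ gives $m-k\ge 1$ and $m+k\le 2m-1=n$, this yields $x_{(m-k)}\le \operatorname{med}(y)\le x_{(m+k)}$. Since $\operatorname{med}(x)=x_{(m)}$ lies in the same interval, we get
\[ |\operatorname{med}(y)-\operatorname{med}(x)|\le \max\{x_{(m+k)}-x_{(m)},\,x_{(m)}-x_{(m-k)}\}. \]
Taking the supremum over $y$ then gives the stated bound on $S_\eta^{\operatorname{med}}(x)$.

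To prove the interleaving, I would use a counting characterization of order statistics: for any $z\in\R^n$ and threshold $t$, $z_{(j)}\le t$ holds iff $\#\{i: z_i\le t\}\ge j$.

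For the upper bound, take $t=x_{(j+k)}$. At least $j+k$ indices $i$ satisfy $x_i\le t$. At most $k$ of these are altered in passing to $y$, so at least $j$ indices still satisfy $y_i\le t$. Hence $y_{(j)}\le x_{(j+k)}$.

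The lower bound is symmetric. At least $n-(j-k)+1$ indices satisfy $x_i\ge x_{(j-k)}$, so at least $n-j+1$ indices satisfy $y_i\ge x_{(j-k)}$. This means at most $j-1$ coordinates of $y$ are strictly below $x_{(j-k)}$, which forces $y_{(j)}\ge x_{(j-k)}$.

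I expect no real obstacle. The only care points are stating the counting characterization correctly when there are ties, using non-strict versus strict inequalities consistently, and checking that the indices $m\pm k$ stay within $[1,n]$, which is exactly what the hypothesis $k\le m-1$ guarantees.
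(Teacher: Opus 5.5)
Your proof is correct and matches the paper's argument: both note that $x_{(m+k)}$ is at or above at least $m+k$ entries of $x$ (and symmetrically $x_{(m-k)}$ is at or below at least $m+k$ entries), so at least $m$ of these survive in $y$, which forces $\operatorname{med}(y)\in[x_{(m-k)},x_{(m+k)}]$. The only difference is that you first prove the rank-interleaving bound for every $j$ with $k<j\le n-k$ and then set $j=m$; this is a harmless generalization of the same counting step.
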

\begin{proof}
Fix $y\in\mathbb R^n$ with $d_H(x,y)\le k$. Thus $y$ is obtained from $x$ by changing at most $k$ coordinates. We claim that $\operatorname{med}(y)\in [x_{(m-k)},x_{(m+k)}]$. Indeed, there are $m+k$ entries of $x$ that are at most $x_{(m+k)}$. Since at most $k$ coordinates are changed, at least $m$ of these entries remain in $y$. Hence at least $m$ entries of $y$ are at most $x_{(m+k)}$, which implies $\operatorname{med}(y)\le x_{(m+k)}$. Similarly, there are $m+k$ entries of $x$ that are at least $x_{(m-k)}$. After changing at most $k$ coordinates, at least $m$ of these entries remain in $y$. Hence at least $m$ entries of $y$ are at least $x_{(m-k)}$, which implies $\operatorname{med}(y)\ge x_{(m-k)}$. Therefore $\operatorname{med}(y)\in [x_{(m-k)},x_{(m+k)}]$. Since $\operatorname{med}(x)=x_{(m)}$, we obtain $|\operatorname{med}(y)-\operatorname{med}(x)| \le \max\left\{ x_{(m+k)}-x_{(m)}, x_{(m)}-x_{(m-k)} \right\}$. Taking the supremum over all $y$ with $d_H(x,y)\le k$ proves the claim.
\end{proof}
We will also need some results from uniform spacings, see, e.g., ~\cite{Pyke1965Spacings}.
\begin{theorem}
\label{thm:uniform-spacings-pyke}
Let $U_1,\dots,U_n \stackrel{\text{iid}}{\sim} \mathsf{Unif}[0,1]$, and let $0=U_{(0)}\le U_{(1)}\le \cdots \le U_{(n)}\le U_{(n+1)}=1$ denote the order statistics with endpoints added. Define the spacings $D_i \coloneqq U_{(i)}-U_{(i-1)}$ for $i=1,\dots,n+1$. Then $(D_1,\dots,D_{n+1})\sim \operatorname{Dirichlet}(1,\dots,1)$. Equivalently, if $E_1,\dots,E_{n+1} \stackrel{\text{iid}}{\sim} \mathsf{Exp}(1)$ and $T\coloneqq\sum_{i=1}^{n+1}E_i$, then $(D_1,\dots,D_{n+1}) \stackrel{d}{=} \left( \frac{E_1}{T},\dots,\frac{E_{n+1}}{T} \right)$.
\end{theorem}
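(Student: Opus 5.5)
The plan is to identify the joint density of the spacings directly by a change of variables, and then to match it with the density of normalized exponentials via a second change of variables. Both parts are elementary; the only real care needed is in computing the two Jacobians.

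\emph{Step 1: order statistics.} Since $U_1,\dots,U_n$ are i.i.d.\ with density $\mathbf{1}_{[0,1]}$, ties occur with probability zero. The map sending a sample to its sorted version is $n!$-to-one almost everywhere. Hence $(U_{(1)},\dots,U_{(n)})$ has density $n!$ on the open simplex $\{0<u_1<\dots<u_n<1\}$.

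\emph{Step 2: spacings.} Consider the linear map $(u_1,\dots,u_n)\mapsto(d_1,\dots,d_n)$ given by $d_i=u_i-u_{i-1}$, with $u_0=0$. It is lower triangular with ones on the diagonal, so its Jacobian is $1$. It maps the ordered simplex bijectively onto $\Delta_n:=\{d_i>0,\ \sum_{i\le n} d_i<1\}$. Therefore $(D_1,\dots,D_n)$ has density $n!\,\mathbf{1}_{\Delta_n}$, and $D_{n+1}=1-\sum_{i\le n}D_i$ is determined by the others. This is precisely the Dirichlet$(1,\dots,1)$ law on $n+1$ coordinates, since its density with respect to the first $n$ coordinates is $\Gamma(n+1)/\Gamma(1)^{n+1}=n!$ on $\Delta_n$.

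\emph{Step 3: exponential representation.} Let $E_1,\dots,E_{n+1}$ be i.i.d.\ $\mathsf{Exp}(1)$, with joint density $e^{-\sum_i e_i}$ on $(0,\infty)^{n+1}$. Change variables to $t=\sum_i e_i$ and $w_i=e_i/t$ for $i\le n$. The inverse map is $e_i=tw_i$ for $i\le n$ and $e_{n+1}=t(1-\sum_{i\le n} w_i)$, and its Jacobian determinant is $t^n$. (To see this, add the first $n$ columns' $t$-rows into the last row, or use the standard Gamma–Dirichlet computation.) The transformed density is therefore
\[ e^{-t}t^n\,\mathbf{1}\{t>0\}\,\mathbf{1}_{\Delta_n}(w). \]
This factorizes, so $(W_1,\dots,W_n)$ is independent of $T\sim\mathrm{Gamma}(n+1,1)$. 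Since $\int_0^\infty e^{-t}t^n\,dt=n!$, the marginal density of $W$ is $n!\,\mathbf{1}_{\Delta_n}$. Setting $W_{n+1}=E_{n+1}/T=1-\sum_{i\le n} W_i$, the vector $(E_1/T,\dots,E_{n+1}/T)$ has the same law as $(D_1,\dots,D_{n+1})$ from Step 2, which proves the equivalence.

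The only step where an error could plausibly occur is the Jacobian $t^n$ in Step 3. I would verify it by row-reducing the $(n+1)\times(n+1)$ matrix of partial derivatives with respect to $(w_1,\dots,w_n,t)$, which is triangular up to one row operation.
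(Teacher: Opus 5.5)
The paper does not prove this statement; it quotes it as a classical fact from Pyke's 1965 paper on spacings. It only uses the consequence that a $k$-spacing $U_{(m+k)}-U_{(m)}$ is distributed as $(E_1+\cdots+E_k)/T$.

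Your argument is a correct, self-contained derivation. Steps 1--2 show that $(D_1,\dots,D_n)$ has density $n!$ on $\Delta_n$, which is exactly the Dirichlet$(1,\dots,1)$ density. Step 3 is the standard Gamma--Dirichlet change of variables, and your Jacobian $t^n$ is right. The cleanest way to see it is to add the rows for $e_1,\dots,e_n$ to the row for $e_{n+1}$. That row becomes $(0,\dots,0,1)$, which leaves a triangular matrix with diagonal $(t,\dots,t,1)$.

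Compared with the citation, your route costs a few lines but makes the appendix self-contained. It also yields the extra fact that $T$ is independent of the normalized vector. The paper never needs this: its tail bound on $A/T$ uses a union bound over $\{A\text{ large}\}$ and $\{T\text{ small}\}$, where $A$ and $T$ are in fact dependent.

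To connect your result to how the paper uses it, add one line. Since $U_{(m+k)}-U_{(m)}=D_{m+1}+\cdots+D_{m+k}$, your identity and the exchangeability of the $E_i$ give it the law of $(E_1+\cdots+E_k)/T$.
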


We can now state the Gaussian empirical sensitivity of the median. 
\begin{theorem} \label{thm:median-emp-sens}
Let $n = 2m - 1$ be odd, and let $X_1, \dots, X_n \stackrel{\text{iid}}{\sim} \mathcal{N}(\mu, 1)$. Fix $\eta \in (0,1/3)$ and set $k = \lfloor \eta n \rfloor$. There exists universal constants $c, C > 0$ such that, for every $\beta \in (0,1/2)$, with probability at least $1 - \beta - Ce^{-cn}$, we have 
\[ S_\eta^{\mathrm{med}}(X) \le C \left( \eta + \frac{\log(1/\beta)}{n} \right).  \]
\end{theorem}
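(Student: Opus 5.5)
By \Cref{lem:median-bound}, it suffices to bound the two order-statistic gaps $X_{(m+k)}-X_{(m)}$ and $X_{(m)}-X_{(m-k)}$. By symmetry ($X\mapsto 2\mu-X$) we handle only the upper gap and union bound, and we may take $\mu=0$. The case $k=0$ is trivial, so assume $k\ge 1$. The plan is to transfer to uniforms via $X_i=\Phi^{-1}(U_i)$. Since $\Phi^{-1}$ is increasing, $X_{(j)}=\Phi^{-1}(U_{(j)})$. On an interval $[\Phi^{-1}(1/2-c_0),\Phi^{-1}(1/2+c_0)]$ around the median, the Gaussian density is bounded below by a constant, so $\Phi^{-1}$ is $L$-Lipschitz on $[1/2-c_0,1/2+c_0]$ for a universal $L$. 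Hence, on the event $E=\{U_{(m-k)},U_{(m+k)}\in[1/2-c_0,1/2+c_0]\}$, we get $X_{(m+k)}-X_{(m)}\le L\,(U_{(m+k)}-U_{(m)})$.

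\textbf{Step 1 (localization).} Here I will use $\eta<1/3$, so $k\le n/3$ and $m+k\le (n+1)/2+n/3\approx 5n/6$. This is not close to $1/2$, so $E$ with a fixed small $c_0$ could fail. I would handle this by choosing $c_0$ depending on $\eta$'s range: take $c_0=0.4$. Then $[0.1,0.9]$ still yields a universal Lipschitz constant $L=1/\varphi(\Phi^{-1}(0.9))$. Since $(m+k)/n\le 5/6+o(1)<0.9$ and $(m-k)/n\ge 1/6-o(1)>0.1$, a Chernoff bound on $\#\{i:U_i\le 0.9\}\sim\Bin(n,0.9)$ (needing at least $m+k$ of them) and on $\#\{i:U_i<0.1\}$ (needing fewer than $m-k$) gives $\Pr(E^c)\le Ce^{-cn}$. (For small $n$, where the $o(1)$ terms matter, absorb the failure into the constant $C$, for example by making $Ce^{-cn}\ge1$.)

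\textbf{Step 2 (spacings).} By \Cref{thm:uniform-spacings-pyke}, $U_{(m+k)}-U_{(m)}=\sum_{i=m+1}^{m+k}D_i\stackrel{d}{=}\frac{1}{T}\sum_{i=1}^{k}E_i$ with $T=\sum_{i=1}^{n+1}E_i$. We have $T\ge n/2$ except with probability $e^{-cn}$, by a lower-tail Chernoff/Cramér bound for Gamma variables. For the numerator, $G_k=\sum_{i\le k}E_i\sim\mathrm{Gamma}(k,1)$ satisfies $\Pr(G_k\ge 2k+t)\le e^{-t/4}$, say via the MGF at $\lambda=1/2$: $\E e^{G_k/2}=2^k$, so $\Pr(G_k\ge s)\le 2^ke^{-s/2}\le e^{-(s-2k)/2}$. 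Setting $s=2k+2\log(2/\beta)$ gives probability at most $\beta/2$. Then
\[
U_{(m+k)}-U_{(m)}\le \frac{2\bigl(2k+2\log(2/\beta)\bigr)}{n}\lesssim \eta+\frac{\log(1/\beta)}{n},
\]
using $k\le\eta n$ and $\log(2/\beta)\lesssim\log(1/\beta)$ for $\beta<1/2$.

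\textbf{Step 3 (combine).} Apply the same argument to the lower gap, which by symmetry is also a sum of $k$ spacings. A union bound over the two gap events (each $\beta/2$), the two $T$ events, and $E^c$ gives total failure probability at most $\beta+Ce^{-cn}$. On the complement, \Cref{lem:median-bound} yields $S_\eta^{\mathrm{med}}(X)\le L\cdot C'(\eta+\log(1/\beta)/n)$.

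The main obstacle I expect is Step 1. The localization must cover order statistics as far as index $m+k\approx 5n/6$ while keeping a universal Lipschitz constant, and this is exactly where $\eta<1/3$ is needed. The remaining steps are routine exponential tail bounds.
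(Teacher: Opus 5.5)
Your proposal is correct and follows essentially the same route as the paper. It transfers to uniforms via $\Phi$ and localizes the relevant order statistics in a fixed compact interval by a binomial Chernoff bound (the paper uses $[1/8,7/8]$, you use $[0.1,0.9]$), so that $\Phi^{-1}$ is uniformly Lipschitz there. It then writes each $k$-spacing as $\mathrm{Gamma}(k,1)/\mathrm{Gamma}(n+1,1)$ via \Cref{thm:uniform-spacings-pyke}, controls the numerator with the MGF at $\lambda=1/2$ and the denominator by its lower tail, and finishes with a union bound and \Cref{lem:median-bound}.
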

\begin{proof}
By translation invariance, it suffices to prove the result for $\mu = 0$. The case $k = 0$ or $\eta = 0$ directly gives us $S_\eta^{\mathrm{med}}(X) = 0$. So we will assume $k \ge 1$. Let $\Phi$ denote the standard Gaussian distribution function and define $U_i \coloneqq \Phi(X_i)$. Then $U_1, \dots, U_n \stackrel{\text{iid}}{\sim} \mathsf{Unif}[0,1]$. Since $\Phi$ is strictly increasing, $X_{(j)} = \Phi^{-1}(U_{(j)})$ for every $j \in [n]$. We first localize the relevant order statistics away from the tails. Since $\eta < 1/3$ and $k=\lfloor \eta n\rfloor$, we have $m - k \ge \frac{n}{6}$ and $m + k \le \frac{5n}{6} + 1$. 

Let $N_- \coloneqq \sum_{i=1}^n \mathbf 1\{U_i\le 1/8\}$ and $N_+ \coloneqq \sum_{i=1}^n \mathbf 1\{U_i\le 7/8\}$. Then $N_- \sim \operatorname{Bin}(n,1/8)$ and $N_+ \sim \operatorname{Bin}(n,7/8)$. By the binomial Chernoff bound in \cref{thm:chernoff}, there exist universal constants $c,C>0$ such that $\mathbb P\left(N_- \ge \frac n6\right)\le Ce^{-cn}$ and $\mathbb P\left(N_+ \le \frac{5n}{6}\right)\le Ce^{-cn}$. Therefore, with probability at least $1-Ce^{-cn}$, $U_{(m-k)}\ge \frac18$ and $U_{(m+k)}\le \frac78$. On this event, all the order statistics $U_{(m-k)}, U_{(m)}, U_{(m+k)}$ lie in $[1/8,7/8]$.

The inverse Gaussian CDF is Lipschitz on $[1/8,7/8]$. Indeed, $\frac{d}{du}\Phi^{-1}(u) = \frac{1}{\varphi(\Phi^{-1}(u))}$, where $\varphi$ is the standard Gaussian density, and the right-hand side is bounded on the compact interval $[1/8,7/8]$. Hence there exists a universal constant $L<\infty$ such that, for all $u,v\in[1/8,7/8]$, $|\Phi^{-1}(u)-\Phi^{-1}(v)|\le L|u-v|$. Consequently, on the localization event, $X_{(m+k)}-X_{(m)} \le L\bigl(U_{(m+k)}-U_{(m)}\bigr)$ and $X_{(m)}-X_{(m-k)} \le L\bigl(U_{(m)}-U_{(m-k)}\bigr)$.

It remains to control the two uniform order-statistic spacings. By \cref{thm:uniform-spacings-pyke}, each $k$-spacing, such as $U_{(m+k)}-U_{(m)}$ and $U_{(m)}-U_{(m-k)}$, has the same marginal distribution as $A/T$, where $A\sim \operatorname{Gamma}(k,1)$ is the sum of $k$ iid exponential random variables, and $T\sim \operatorname{Gamma}(n+1,1)$ is the sum of all $n+1$ such variables. We now prove a high-probability bound for $A/T$. Let $t\ge 0$. By Chernoff's inequality applied to the moment generating function of $A$, we have 
\[ \mathbb P(A\ge 2(k+t)) \le e^{-(k+t)} \mathbb E e^{A/2} = e^{-(k+t)}2^k = \exp\{-t+k(\log 2-1)\} \le e^{-t}. \] 
Also, another Chernoff bound gives $\mathbb P(T\le n/2)\le e^{-cn}$ for a universal constant $c>0$. Hence, with probability at least $1-e^{-t}-e^{-cn}$, $\frac{A}{T} \le  4\frac{k+t}{n}$. Applying this bound to the two spacings and taking a union bound gives $\mathbb P\left( \max\left\{ U_{(m+k)}-U_{(m)},\, U_{(m)}-U_{(m-k)} \right\} > 4\frac{k+t}{n} \right) \le 2e^{-t}+2e^{-cn}$. Taking $t=\log(4/\beta)$, we obtain $\max\left\{ U_{(m+k)}-U_{(m)},\, U_{(m)}-U_{(m-k)} \right\} \le C\frac{k+\log(1/\beta)}{n}$ with probability at least $1-\beta/2-Ce^{-cn}$.

Using union bound, we get that with probability at least $1-\beta-Ce^{-cn}$, 
\[ \max\left\{ X_{(m+k)}-X_{(m)},\, X_{(m)}-X_{(m-k)} \right\} \le C\frac{k+\log(1/\beta)}{n} \le C\left( \eta+\frac{\log(1/\beta)}{n} \right). \] 
Finally, by \cref{lem:median-bound}, $S_\eta^{\operatorname{med}}(X) \le \max\left\{ X_{(m+k)}-X_{(m)},\, X_{(m)}-X_{(m-k)} \right\}$. Therefore, with probability at least $1-\beta-Ce^{-cn}$, we conclude $S_\eta^{\operatorname{med}}(X) \le C\left( \eta+\frac{\log(1/\beta)}{n} \right)$, which proves the theorem.
\end{proof}

\section{Resampling Adversary Upper Bound}
\label{sec:resampling-adversary}
In this section, we record a simple observation showing that the linear-in-$\eta$ term in our corruption lower bound should not be expected under the resampling adversary.

Recall the resampling adversary:
\begin{definition}[Resampling Adversary] Fix $\eta \in (0,1)$ and let $k \coloneqq \lfloor \eta n \rfloor$. Given $X = (X_1, \dots, X_n)$ with $X \sim \mathcal{N}(\mu, \mathbb{I}_d)^{\otimes n}$. Let $I \subset [n]$ be a uniformly random subset of size $k$, independent of $X$. Let $(X_i')_{i \in I}$ be independent fresh samples from $\mathcal{N}(\mu, \mathbb{I}_d)$, also independent of $(X, I)$. The resampled dataset $X^{\mathrm{res}}$ is defined by
\[ X_i^{\mathrm{res}} \coloneqq \begin{cases} X_i', &\text{if } i \in I, \\ X_i, &\text{if } i \notin I .\end{cases} \]
\end{definition}

The following proposition quantifies how much the empirical mean shifts under a resampling adversary.
\begin{proposition}
    Let $\overline{X} \coloneqq \frac{1}{n} \sum_{i = 1}^n X_i$ and $\overline{X}^{\mathrm{res}} \coloneqq \frac{1}{n} \sum_{i =1}^n X_i^{\mathrm{res}}$. Then 
    \[ \left( \mathbb{E}_{X, I, X'} \| \overline{X}^{\mathrm{res}} - \overline{X} \|_2^2 \right)^{1/2} \asymp \sqrt{\frac{\eta d}{n}}.\]
\end{proposition}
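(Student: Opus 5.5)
The plan is to compute $\mathbb{E}\|\overline{X}^{\mathrm{res}} - \overline{X}\|_2^2$ exactly, by conditioning on the random index set $I$. We have
\[ \overline{X}^{\mathrm{res}} - \overline{X} = \frac{1}{n}\sum_{i \in I} (X_i' - X_i), \]
since all coordinates outside $I$ cancel. Conditional on $I$ (which is independent of $X$ and $X'$), the vectors $X_i' - X_i$ for $i \in I$ are i.i.d.\ $\mathcal{N}(0, 2\mathbb{I}_d)$, because $X_i$ and $X_i'$ are independent copies of $\mathcal{N}(\mu,\mathbb{I}_d)$. In particular, the unknown mean $\mu$ drops out entirely.

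Conditional on $I$, the sum $\sum_{i\in I}(X_i'-X_i)$ is therefore distributed as $\mathcal{N}(0, 2k\,\mathbb{I}_d)$, where $k = |I| = \lfloor \eta n\rfloor$. Its squared norm has expectation $2kd$, so
\[ \mathbb{E}_{X,I,X'}\|\overline{X}^{\mathrm{res}} - \overline{X}\|_2^2 = \frac{2kd}{n^2}. \]
This value does not depend on $I$, so averaging over $I$ changes nothing.

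It remains to relate $k$ to $\eta n$. Using the standing convention $\eta \ge 1/n$ (so $k \ge 1$), we have $\eta n/2 \le \lfloor \eta n\rfloor \le \eta n$, because $\lfloor x\rfloor \ge x/2$ for $x \ge 1$. Hence $\frac{2kd}{n^2}$ lies between $\frac{\eta d}{n}$ and $\frac{2\eta d}{n}$. Taking square roots gives the two-sided bound $\sqrt{\eta d/n} \le (\cdots)^{1/2} \le \sqrt{2\eta d/n}$, which is the claimed $\asymp$.

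There is no real obstacle. The only subtle points are to note the independence of $I$ from the data, so that conditioning is legitimate, and to invoke the implicit assumption $k\ge 1$; without it the quantity is $0$ when $\eta n<1$ and the lower bound fails.
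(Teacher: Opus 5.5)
Your proposal is correct and follows the paper's proof: condition on $I$, observe that $\overline{X}^{\mathrm{res}} - \overline{X} = \frac{1}{n}\sum_{i\in I}(X_i'-X_i)$ with i.i.d.\ $\mathcal{N}(0,2\mathbb{I}_d)$ summands, obtain the exact value $\frac{2kd}{n^2}$ independent of $I$, and compare with $\frac{\eta d}{n}$. Your explicit bound $\eta n/2 \le \lfloor \eta n\rfloor \le \eta n$ under the standing assumption $\eta \ge 1/n$ makes precise a step the paper leaves implicit in its final $\asymp$.
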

    Consequently, under the resampling adversary, the empirical mean has empirical stability of order $\sqrt{\eta d/n}$, and does not exhibit an $\eta$ term. 

\begin{proof}
By definition, we have
\[ \overline{X}^{\mathrm{res}} - \overline{X} = \frac{1}{n} \sum_{i \in I} (X_i' - X_i). \]
Conditional on $I$, the random vectors $(X_i' - X_i)_{i \in I}$ are independent, and each has law $\mathcal{N}(0,2 \mathbb{I}_d)$. Therefore, 
\[ (\overline{X}^{\mathrm{res}} - \overline{X}) \mid I \sim \mathcal{N} \left( 0, \frac{2k}{n^2} \mathbb{I}_d \right) \]
Hence, 
\[ \mathbb{E}_{X, X' \mid I} \| \overline{X}^{\mathrm{res}} - \overline{X} \|_2^2 = \operatorname{tr} \left( \frac{2k}{n^2} \mathbb{I}_d \right) = \frac{2kd}{n^2}. \]
Since RHS is deterministic in $I$, averaging over $I$ gives the same value. Therefore, taking square root yields
\[ \left( \mathbb{E}_{X, I, X'}\| \overline{X}^{\mathrm{res}} - \overline{X} \|_2^2  \right)^{1/2} = \sqrt{\frac{2kd}{n^2}} \asymp \sqrt{\frac{\eta d}{n}}.  \]
\end{proof}

\section{Lemmas} 
In this section, we isolate the main lemma that allow us to prove the high-dimensional theorem for the mean obstruction by reducing the problem to a scalar setting. Ultimately, we claim that it suffices to prove the lower bound for a bounded, one-dimensional estimator $h \colon \mathbb{R}^n \to [0,1]$ that inherits $O(1/n)$ integrated risk. 

This restriction to compact interval $[0,1]$ is rigorously justified by the following reduction: first, we restrict the parameter space of the projected 1D problem to the interval $\mu' \in [0,1]$. Because the original high-dimensional estimator $f$ is uniformly MSE-accurate over all $\mathbb{R}^d$, its 1D projection must maintain low average risk over this specified bounded sub-interval under a Gaussian prior. Because the true parameter $\mu$ is now restricted to $[0,1]$, any output of the estimator outside this interval is strictly suboptimal, and thus we may clip the estimator's output to $[0,1]$ which strictly decreases or maintains both its estimation and its empirical sensitivity. Consequently, we may establish an empirical sensitivity lower bound for the clipped version of the estimator $h$ to guarantee a valid lower bound for the original unrestricted high-dimensional estimator $f$. 

Let $V = \mathsf{span}(u)$. If $\lambda \in u^{\perp}$ and $Z \sim \mathcal{N}(0,\mathbb{I}_d)$, then $\lambda + (\mathbb{I}_d - uu^\top) Z$ is drawn from a Gaussian distribution on the affine subspace $\lambda + V^{\perp}$ with mean $\lambda$. 

\begin{lemma}\label{lem:projection-conditioning}
Let $f \colon (\R^d)^n\to\R^d$ be measurable. Fix $u\in\mathbb S^{d-1}$ and $\lambda\in u^\perp$. For $t=(t_1,\dots,t_n)\in\R^n$, define
\[ g_\lambda(t) \coloneqq \E_Z\left[ \left\langle u, f(t_1u+V_1,\dots,t_nu+V_n) \right\rangle \right], \qquad V_i \coloneqq \lambda+(\mathbb{I}_d-uu^\top)Z_i, \]
where $Z_1,\dots,Z_n\stackrel{\mathrm{iid}}{\sim}\mathcal N(0,\mathbb{I}_d)$. Then for every $\eta\in(0,1)$, every $q\in\{1,2\}$, and every $\mu'\in\R$,
\[ \left( \E_{T\sim\mathcal N(\mu',1)^{\otimes n}} \bigl[(S_\eta^{g_\lambda}(T))^q\bigr] \right)^{1/q} \le \ES_{\eta,q}(f;\mu' u+\lambda). \]

Moreover,
\[ \E_{T\sim\mathcal N(\mu',1)^{\otimes n}} [(g_\lambda(T)-\mu')^2] \le \E_{X\sim\mathcal N(\mu' u+\lambda,\mathbb{I}_d)^{\otimes n}} \bigl[\langle u,f(X)-(\mu' u+\lambda)\rangle^2\bigr]. \]
\end{lemma}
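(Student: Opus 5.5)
The key observation is a distributional identity. If $T\sim\mathcal N(\mu',1)^{\otimes n}$ and $Z_1,\dots,Z_n\stackrel{\mathrm{iid}}{\sim}\mathcal N(0,\mathbb{I}_d)$ are independent, then the vectors $X_i \coloneqq T_i u + V_i = T_i u + \lambda + (\mathbb{I}_d-uu^\top)Z_i$ are i.i.d.\ $\mathcal N(\mu' u+\lambda,\mathbb{I}_d)$. Indeed, $T_i u$ and $(\mathbb{I}_d-uu^\top)Z_i$ are independent Gaussians supported on $V$ and $V^\perp$ respectively. Their means are $\mu' u$ and $0$, and their covariances $uu^\top$ and $\mathbb{I}_d-uu^\top$ sum to $\mathbb{I}_d$. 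Write $F(t,Z)\coloneqq \langle u, f(t_1u+V_1,\dots,t_nu+V_n)\rangle$, so that $g_\lambda(t)=\E_Z F(t,Z)$. Both claims then follow from Jensen's inequality applied to the conditional expectation over $Z$.

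\textbf{Accuracy claim.} Since $g_\lambda(T)-\mu' = \E_Z[\langle u, f(X)\rangle - \mu']$ and $\langle u,\mu' u+\lambda\rangle=\mu'$ (because $\lambda\perp u$), Jensen gives
\[
(g_\lambda(T)-\mu')^2\le \E_Z\bigl[\langle u,f(X)-(\mu' u+\lambda)\rangle^2\bigr].
\]
Taking $\E_T$ and using the distributional identity yields the claim. If $f$ is only measurable, one should note that integrability of $\langle u,f(X)\rangle$ is needed for $g_\lambda$ to be defined. When the right-hand side is finite this holds for a.e.\ $t$ by Fubini; otherwise the inequality is trivial, with the convention that $g_\lambda$ is defined on a full-measure set.

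\textbf{Sensitivity claim.} Fix $t$ and any $t'$ with $\Hamm(t,t')\le k$. Using the \emph{same} $Z$, define $x(t,Z)\coloneqq(t_iu+V_i)_i$ and $x(t',Z)$ similarly. Row $i$ changes only if $t_i\neq t_i'$, so $\Hamm(x(t,Z),x(t',Z))\le k$. Hence
\[
|F(t',Z)-F(t,Z)|\le \norm{f(x(t',Z))-f(x(t,Z))}\le S_\eta^f(x(t,Z)),
\]
using $\norm{u}=1$ and Cauchy--Schwarz. Therefore
\[
|g_\lambda(t')-g_\lambda(t)|\le \E_Z|F(t',Z)-F(t,Z)|\le \E_Z S_\eta^f(x(t,Z)).
\]
The bound on the right does not depend on $t'$, so taking the supremum over $t'$ gives $S_\eta^{g_\lambda}(t)\le \E_Z S_\eta^f(x(t,Z))$. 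Raising to the power $q$ and applying Jensen (for $q=2$) gives $S_\eta^{g_\lambda}(t)^q\le \E_Z[S_\eta^f(x(t,Z))^q]$. Integrating over $T\sim\mathcal N(\mu',1)^{\otimes n}$ and invoking the distributional identity once more gives
\[
\E_T\bigl[(S_\eta^{g_\lambda}(T))^q\bigr]\le \E_X\bigl[(S_\eta^f(X))^q\bigr]=\ES_{\eta,q}(f;\mu' u+\lambda)^q,
\]
and taking $q$-th roots finishes the argument.

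\textbf{Main obstacle.} The substantive idea is the shared-$Z$ coupling, which turns a Hamming-$k$ perturbation in $t$ into a Hamming-$k$ perturbation in the $d$-dimensional data. The only real care needed is measure-theoretic. First, $S_\eta^f$ is a supremum over uncountably many datasets and may fail to be measurable. I would handle this by interpreting the expectations as outer expectations, or by noting that the pointwise bound above only needs the upper integral. Second, $g_\lambda$ must be integrable; I would deal with this by assuming $\E\norm{f(X)}^2<\infty$, which the accuracy hypothesis provides.
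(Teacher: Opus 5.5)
Your proposal is correct and takes essentially the same approach as the paper. Like the paper, you lift $t$ and $t'$ with the same orthogonal noise $V_i$ so that Hamming distance is preserved, derive $S_\eta^{g_\lambda}(t)\le \E_Z S_\eta^f(x(t,Z))$, and then use Jensen with the identity $T_iu+V_i\sim\mathcal N(\mu'u+\lambda,\mathbb{I}_d)$ for both claims. Your measure-theoretic caveats are points the paper passes over silently, but the lemma itself assumes no accuracy hypothesis, so finiteness of $\E\norm{f(X)}^2$ would have to be imposed there rather than derived from accuracy.
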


\begin{proof}
Set $k \coloneqq \lfloor\eta n\rfloor$. Fix $t,t'\in\R^n$ with $\Hamm(t,t')\le k$, and use the same orthogonal noise to lift both
scalar samples: $x_i = t_i u + V_i$ and $x_i' = t_i' u + V_i$. Then $\Hamm(x, x') \le k$ and so
\[ \begin{aligned} |g_\lambda(t')-g_\lambda(t)| &= \left|\E_Z\langle u,f(x')-f(x)\rangle\right| \\ &\le \E_Z\norm{f(x')-f(x)} \\ &\le \E_Z S_\eta^f(x). \end{aligned} \]

Taking the supremum over all such $t'$ gives
\[ S_\eta^{g_\lambda}(t) \le \E_Z S_\eta^f(t_1u+V_1,\dots,t_nu+V_n), \]
which implies the sensitivity comparison we want by Jensen's inequality and averaging over $T \sim \mathcal{N}(\mu', 1)^{\otimes n}$. 

For the risk comparison, Jensen gives
\[ \begin{aligned} \E_T[(g_\lambda(T)-\mu')^2] &= \E_T\left[ \left( \E_Z[\langle u,f(T_1u+V_1,\dots,T_nu+V_n)\rangle-\mu'] \right)^2 \right] \\ &\le \E_{T,Z} \bigl[ (\langle u,f(T_1u+V_1,\dots,T_nu+V_n)\rangle-\mu')^2 \bigr]. \end{aligned} \]

For each $i$,
\[ T_i u+V_i = T_i u+\lambda+(\mathbb{I}_d-uu^\top)Z_i \sim \mathcal N(\mu' u+\lambda,\mathbb{I}_d). \]

Also $\langle u,\mu' u+\lambda\rangle=\mu'$, since $\lambda\in u^\perp$. Therefore the last term above is just
\[ \E_{X\sim\mathcal N(\mu' u+\lambda,\mathbb{I}_d)^{\otimes n}} \bigl[\langle u,f(X)-(\mu' u+\lambda)\rangle^2\bigr], \]
which proves the claim.
\end{proof}

\begin{lemma}
\label{lem:good-section}
Let $f \colon (\R^d)^n\to\R^d$ be measurable and $\left(C_{\mathsf{MSE}}, \frac{d}{n}\right)$-uniformly MSE-accurate. Fix $\rho>0$. Then there exist $u\in\mathbb S^{d-1}$ and $\lambda_*\in u^\perp$ such that the scalar estimator $g_{\lambda_*}$ from \cref{lem:projection-conditioning} satisfies
\[ \E_{\mu'\sim\mathcal N(0,\rho^2)} \E_{T\sim\mathcal N(\mu',1)^{\otimes n}} [(g_{\lambda_*}(T)-\mu')^2] \le \frac{C_{\mathsf{MSE}}}{n}. \]
\end{lemma}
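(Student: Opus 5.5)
The plan is an averaging argument: put a prior on the mean, decompose the total risk into directional risks, and use the projection–conditioning lemma to pass to a single scalar estimator.

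\textbf{Step 1: average the full risk over a prior.} Take the prior $\mu \sim \mathcal N(0,\rho^2\mathbb{I}_d)$. Uniform MSE accuracy (\cref{def:mse} with $\gamma = d/n$) bounds the risk at every $\mu$ by $C_{\mathsf{MSE}} d/n$. Averaging over $\mu$ therefore gives
\[ \E_{\mu}\E_{X\sim\mathcal N(\mu,\mathbb{I}_d)^{\otimes n}}\norm{f(X)-\mu}^2 \le \frac{C_{\mathsf{MSE}} d}{n}. \]

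\textbf{Step 2: split into coordinates and pick a direction.} Write $\norm{f(X)-\mu}^2=\sum_{j=1}^d \langle e_j, f(X)-\mu\rangle^2$. The $d$ averaged directional risks sum to at most $C_{\mathsf{MSE}}d/n$, so by pigeonhole some $u=e_j$ satisfies
\[ \E_{\mu}\E_X\bigl[\langle u,f(X)-\mu\rangle^2\bigr]\le \frac{C_{\mathsf{MSE}}}{n}. \]
This step is essential. Without restricting to a single direction we would only get the bound $C_{\mathsf{MSE}}d/n$, not $C_{\mathsf{MSE}}/n$.

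\textbf{Step 3: condition on the orthogonal component.} Under the isotropic Gaussian prior, $\mu' \coloneqq \langle u,\mu\rangle$ and $\lambda \coloneqq (\mathbb{I}_d-uu^\top)\mu$ are independent. Moreover $\mu'\sim\mathcal N(0,\rho^2)$, $\lambda$ is Gaussian on $u^\perp$, and $\mu=\mu'u+\lambda$. The directional risk therefore equals
\[ \E_\lambda\Bigl[\E_{\mu'\sim\mathcal N(0,\rho^2)}\E_{X\sim\mathcal N(\mu'u+\lambda,\mathbb{I}_d)^{\otimes n}}\langle u,f(X)-(\mu'u+\lambda)\rangle^2\Bigr]. \]
By the moreover-part of \cref{lem:projection-conditioning}, for each fixed $\lambda$ the inner quantity is at least $\E_{\mu'}\E_{T\sim\mathcal N(\mu',1)^{\otimes n}}[(g_\lambda(T)-\mu')^2]$.

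\textbf{Step 4: select a good section.} The $\lambda$-average of these scalar Bayes risks is at most $C_{\mathsf{MSE}}/n$. Hence there is some $\lambda_*$ in the support whose scalar Bayes risk is at most that average, i.e.\ at most $C_{\mathsf{MSE}}/n$. This $\lambda_*$, together with the direction $u$ from Step 2, gives the claim.

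\textbf{Technical points.} The main care needed is measurability and finiteness, so that Fubini applies and the pointwise selection in Step 4 is legitimate.

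For Fubini, I will check joint measurability of $(\lambda,t)\mapsto g_\lambda(t)$. This follows from the measurability of $f$ together with Fubini applied to the integral over $Z$. Nonnegativity of all integrands lets Tonelli be used throughout.

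For integrability, $g_\lambda$ is well defined for almost every $(\lambda,t)$. The reason is that $\E_{T,Z}\langle u,f\rangle^2<\infty$ for almost every $\lambda$, which follows from the finite averaged risk.

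For the selection, if a function's average over a probability measure is at most a value, then the set where the function is at most that value has positive measure. So $\lambda_*$ can be chosen from the almost-sure set on which $g_{\lambda_*}$ is defined.
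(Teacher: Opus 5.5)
Your proposal is correct and follows the paper's proof essentially step for step: the prior $\mathcal N(0,\rho^2\mathbb{I}_d)$, extracting a good direction, splitting $\mu=\mu' u+\lambda$ into independent pieces, applying the ``moreover'' part of \cref{lem:projection-conditioning}, and selecting a good $\lambda_*$. The one difference is that you find the direction by pigeonhole over the standard basis $e_1,\dots,e_d$, whereas the paper averages over a uniformly random $U\in\mathbb S^{d-1}$ via $\E_U\langle U,z\rangle^2=\norm{z}^2/d$ (the same averaging identity in continuous form); your added measurability and integrability remarks are sound and fill in details the paper leaves implicit.
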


\begin{proof}
Let $M\sim\mathcal N(0,\rho^2\mathbb{I}_d)$ and let
$U\sim\Unif(\mathbb S^{d-1})$, independently of all other randomness.
For every fixed $z\in\R^d$,
\[ \E_U[\langle U,z\rangle^2]=\frac{\norm z^2}{d}. \]

Hence, by uniform MSE accuracy,
\[ \begin{aligned} &\E_U\E_M\E_{X\sim\mathcal N(M,\mathbb{I}_d)^{\otimes n}} [\langle U,f(X)-M\rangle^2] \\ &\qquad= \frac1d\E_M\E_{X\sim\mathcal N(M,\mathbb{I}_d)^{\otimes n}} \norm{f(X)-M}^2 \le \frac{C_{\mathsf{MSE}}}{n}. \end{aligned} \]

Therefore there exists $u\in\mathbb S^{d-1}$ such that
\[ \E_M\E_{X\sim\mathcal N(M,\mathbb{I}_d)^{\otimes n}} [\langle u,f(X)-M\rangle^2] \le \frac{C_{\mathsf{MSE}}}{n}. \]

Decompose
\[ M=\mu' u+\lambda,\qquad \mu' \coloneqq \langle M,u\rangle,\qquad \lambda \coloneqq M-\langle M,u\rangle u. \]

Then $\mu'\sim\mathcal N(0,\rho^2)$, $\lambda\in u^\perp$, and
$\mu'$ is independent of $\lambda$. Applying
\cref{lem:projection-conditioning} and averaging over $\lambda$ gives
\[ \begin{aligned} &\E_\lambda\E_{\mu'\sim\mathcal N(0,\rho^2)} \E_{T\sim\mathcal N(\mu',1)^{\otimes n}} [(g_\lambda(T)-\mu')^2] \\ &\qquad\le \E_M\E_{X\sim\mathcal N(M,\mathbb{I}_d)^{\otimes n}} [\langle u,f(X)-M\rangle^2] \le \frac{C_{\mathsf{MSE}}}{n}. \end{aligned} \]

Thus some $\lambda_*\in u^\perp$ satisfies the desired bound.
\end{proof}

\begin{lemma}
\label{lem:clipping}
Let $K \subseteq \mathbb{R}$ be a closed interval and let $\mathrm{clip}_K(t)$ denote Euclidean projection onto $K$. For any scalar estimator $g$, define $h \coloneqq \mathrm{clip}_K \circ g$. Then, for every $x \in \mathbb{R}^n$, we have $S_\eta^h(x) \le S_\eta^g(x)$. Moreover, if $\mu' \in K$, then for every $x$, $|h(x) - \mu'| \le |g(x) - \mu'|$. 
\end{lemma}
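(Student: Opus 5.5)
The lemma is a pointwise statement about the projection $\mathrm{clip}_K$, which is $1$-Lipschitz, so I would prove both claims directly from that. Write $K=[\alpha,\beta]$, allowing infinite endpoints. Then $\mathrm{clip}_K(t)=\min\{\max\{t,\alpha\},\beta\}$.

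The first step is the contraction property: for all $s,t\in\mathbb{R}$,
\[ |\mathrm{clip}_K(s)-\mathrm{clip}_K(t)|\le|s-t|. \]
This follows because $\mathrm{clip}_K$ is a composition of $t\mapsto\max\{t,\alpha\}$ and $t\mapsto\min\{t,\beta\}$, and each of these is $1$-Lipschitz; one checks this by cases on which side of the threshold $s$ and $t$ lie. Alternatively, it is the standard fact that metric projection onto a closed convex set is nonexpansive.

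For the sensitivity claim, fix $x$ and any $y$ with $\Hamm(x,y)\le\lfloor\eta n\rfloor$. The contraction gives
\[ |h(y)-h(x)|=|\mathrm{clip}_K(g(y))-\mathrm{clip}_K(g(x))|\le|g(y)-g(x)|\le S_\eta^g(x). \]
Taking the supremum over all such $y$ yields $S_\eta^h(x)\le S_\eta^g(x)$. The definition of $S_\eta$ in the scalar case uses exactly this supremum of absolute differences, so nothing further is needed.

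For the accuracy claim, take $\mu'\in K$. Then $\mathrm{clip}_K(\mu')=\mu'$, and applying the contraction with $s=g(x)$ and $t=\mu'$ gives $|h(x)-\mu'|\le|g(x)-\mu'|$. Two remarks complete the argument:
\begin{itemize}
\item Measurability of $h$ is immediate, since it is a continuous function of the measurable $g$.
\item Squaring and integrating the pointwise bounds transfers both the risk bound and $\ES_{\eta,q}$ to $h$, which is how the lemma is used later.
\end{itemize}

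There is no real obstacle here. The only points needing care are allowing unbounded intervals (handled by the $\min/\max$ form) and noting that the Lipschitz case check covers the mixed cases where one point is clipped and the other is not.
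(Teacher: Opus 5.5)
Your proof is correct and follows essentially the same route as the paper: both use that $\mathrm{clip}_K$ is $1$-Lipschitz and fixes every point of $K$, apply this to $g(x)$ versus $g(y)$ and then take the supremum for the sensitivity claim, and apply it to $g(x)$ versus $\mu'$ for the accuracy claim. Your explicit $\min/\max$ formula and the measurability remark are harmless additions that the paper leaves implicit.
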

\begin{proof}
    The projection map $\mathrm{clip}_K$ is $1$-Lipschitz and fixes every point of $K$. Thus, if $\mu \in K$, we have 
    \[ |h(x) - \mu'| = |\mathrm{clip}_K(g(x)) - \mathrm{clip}_K(\mu')| \le |g(x) - \mu'|. \]
    Similarly, for any $x, x'$, we obtain $|h(x') - h(x)| \le |g(x') - g(x)|$. Taking the supremum over all $x'$ with $d_H(x, x') \le \lfloor \eta n \rfloor$ proves the sensitivity bound we want. 
\end{proof}
\begin{lemma} \label{lem:scalar}
    Let $f\colon (\mathbb{R}^d)^n \to \mathbb{R}^d$ be measurable and $\left(C_{\mathsf{MSE}}, \frac{d}{n}\right)$-uniformly MSE-accurate. There exists $u \in \mathbb{S}^{d - 1}, \lambda_{\ast} \in u^{\perp}$, and a scalar estimator $h \colon \mathbb{R}^n \to [0,1]$ such that, for every $\eta \in (0,1)$, every $q \in \{ 1, 2 \}$, and every $\mu' \in [0,1]$, 
    \[ (\mathbb{E}_{X \sim \mathcal{N}(\mu', 1)^{\otimes n}}[S_\eta^h(X)^q])^{1/q} \le \ES_{\eta, q} (f; \mu' u + \lambda_{\ast})\]
    and 
    \[ \int_0^1 \mathbb{E}_{X \sim \mathcal{N}(\mu', 1)^{\otimes n}}[(h(X) - \mu')^2] \ d\mu' \le \frac{C_{\mathsf{int}}}{n}, \]
    where $C_{\mathsf{int}} = C_{\mathsf{int}}(C_{\mathsf{MSE}})$. 
\end{lemma}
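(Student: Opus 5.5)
The plan is to chain \cref{lem:good-section}, \cref{lem:projection-conditioning}, and \cref{lem:clipping}. The only real work is turning the Gaussian-prior integrated risk into a uniform-prior bound on $[0,1]$.

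\textbf{Choice of estimator.} Fix $\rho = 1$ and apply \cref{lem:good-section}. This gives $u \in \mathbb S^{d-1}$ and $\lambda_* \in u^\perp$ such that the scalar estimator $g \coloneqq g_{\lambda_*}$ satisfies
\[ \E_{\mu'\sim\mathcal N(0,1)}\E_{T\sim\mathcal N(\mu',1)^{\otimes n}}[(g(T)-\mu')^2]\le \frac{C_{\mathsf{MSE}}}{n}. \]
Define $h \coloneqq \mathrm{clip}_{[0,1]}\circ g$, which maps $\R^n$ into $[0,1]$. Note that $u$, $\lambda_*$ and $h$ do not depend on $\eta$ or $q$, as the statement requires.

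\textbf{Sensitivity inequality.} For every $x$, \cref{lem:clipping} gives $S_\eta^h(x)\le S_\eta^{g}(x)$. Raising to the $q$-th power and averaging over $X\sim\mathcal N(\mu',1)^{\otimes n}$ gives
\[ \bigl(\E[S_\eta^h(X)^q]\bigr)^{1/q}\le \bigl(\E[S_\eta^{g}(X)^q]\bigr)^{1/q}. \]
The right-hand side is at most $\ES_{\eta,q}(f;\mu'u+\lambda_*)$ by the first conclusion of \cref{lem:projection-conditioning}. This holds for every $\mu'\in\R$, in particular for every $\mu' \in [0,1]$.

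\textbf{Integrated risk.} For $\mu'\in[0,1]$, \cref{lem:clipping} gives the pointwise bound $|h(x)-\mu'|\le|g(x)-\mu'|$. Let $\varphi$ be the standard normal density. On $[0,1]$ we have $\varphi(\mu')\ge \varphi(1)=e^{-1/2}/\sqrt{2\pi}$, so $\mathbf 1_{[0,1]}(\mu')\le \varphi(\mu')/\varphi(1)$. Therefore
\[ \int_0^1 \E[(h(X)-\mu')^2]\,d\mu' \le \frac{1}{\varphi(1)}\int_{\R}\varphi(\mu')\,\E[(g(X)-\mu')^2]\,d\mu' \le \frac{\sqrt{2\pi e}\,C_{\mathsf{MSE}}}{n}. \]
So $C_{\mathsf{int}}=\sqrt{2\pi e}\,C_{\mathsf{MSE}}$ works.

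The only points needing care are measurability and integrability. The function $g_{\lambda_*}$ is well defined and measurable, since it is an expectation of a measurable function by Fubini. The integrand is nonnegative, so Tonelli justifies exchanging the integral over $\mu'$ with the expectation over $X$.
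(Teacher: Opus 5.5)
Your proposal is correct and follows the paper's proof essentially step for step. You apply \cref{lem:good-section} with $\rho=1$, set $h=\mathrm{clip}_{[0,1]}\circ g_{\lambda_*}$, and get the sensitivity comparison from \cref{lem:clipping} and \cref{lem:projection-conditioning}; you then convert the Gaussian-prior risk to the uniform prior on $[0,1]$ via $\varphi\ge\varphi(1)$ there, which gives the paper's constant $C_{\mathsf{int}}=c_0^{-1}C_{\mathsf{MSE}}=\sqrt{2\pi e}\,C_{\mathsf{MSE}}$.
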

\begin{proof}
    Apply \cref{lem:good-section} with $\rho = 1$ to obtain $u$ and $\lambda_{\ast}$. Let $g \coloneqq g_{\lambda_{\ast}}$ be the scalar estimator from \cref{lem:projection-conditioning}, and define $h \coloneqq \textrm{clip}_{[0,1]} \circ g$. By \cref{lem:clipping}, $S_\eta^h(x) \le S_\eta^g(x)$ for every $x$, and for every $\mu' \in [0,1]$, $|h(x) - \mu'| \le |g(x) - \mu'|$. The sensitivity comparison thus follow from \cref{lem:projection-conditioning}. It remains to prove the risk bound. Indeed, let $\varphi$ denote the density of $\mathcal{N}(0,1)$ and set $c_0 \coloneqq \inf_{\mu' \in [0,1]} \varphi(\mu') > 0$. Then, 
    \begin{align*}
      \int_0^1 \mathbb{E}_{X \sim \mathcal{N}(\mu', 1)^{\otimes n}}[(h(X) - \mu')^2] \ d\mu' &\le \int_0^1 \mathbb{E}_{X \sim \mathcal{N}(\mu', 1)^{\otimes n}}[(g(X) - \mu')^2] \ d\mu' \\ 
      &\le c_0^{-1} \mathbb{E}_{\mu' \sim \mathcal{N}(0,1)}\mathbb{E}_{X \sim \mathcal{N}(\mu', 1)^{\otimes n}}[(g(X) - \mu')^2]  \\ 
      &\le \frac{c_0^{-1} C_{\mathsf{MSE}}}{n}.
    \end{align*}
    Therefore, the claim holds with $C_{\mathsf{int}} \coloneqq c_0^{-1} C_{\mathsf{MSE}}$. 
\end{proof}
\section{Proof of~\texorpdfstring{\cref{thm:upper-main}}{}}
\label{app:upper-main}
We will rely on the following estimator that is recently studied in \cite{ChenDMM26} in the context of empirical mean recovery. 
\begin{theorem}[Theorem 5.5 from \cite{ChenDMM26}] \label{thm:CDMM} Suppose $0 \le \eta < \frac{1}{2}$ is bounded away from $\frac{1}{2}$. There exists a (computationally inefficient) $\eta$-robust estimator that, for all $\mu \in \mathbb{R}^d$, given $n$ iid samples from $\mathcal{N}(\mu, \mathbb{I}_d)$ with empirical mean $\overline{x}$, outputs $\hat{x}$ such that $\| \hat{x} - \overline{x} \|_2 \le \alpha$, with probability at least $1 - \beta$, for 
\[ \alpha = O \left( \eta \sqrt{\log (1/\eta) } + \sqrt{\eta \cdot \frac{d + \log (1/\beta)}{n}} \right) \]
\end{theorem}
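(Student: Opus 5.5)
Theorem~\ref{thm:CDMM} is imported from \cite{ChenDMM26}, so we only sketch how we would prove it. We would use an inefficient ``stable-subset'' estimator whose analysis reduces to one deterministic stability property of the clean sample. Let $k \coloneqq \lfloor \eta n\rfloor$ and set
\[ \alpha \coloneqq C\left(\eta\sqrt{\log(1/\eta)}+\sqrt{\eta\,\tfrac{d+\log(1/\beta)}{n}}\right). \]
Call a multiset $T$ of $m\ge (1-\eta)n$ points \emph{$\alpha$-stable} if, for every sub-multiset $R\subseteq T$ with $|R|\le 2k$, we have $\bigl\|\tfrac1n\sum_{i\in R}(t_i-\overline{t})\bigr\|\le \alpha$, where $\overline t$ is the mean of $T$. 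Given the corrupted input $Y$, the estimator searches over all subsets $T\subseteq Y$ of size $n-k$, picks any $\alpha$-stable one, and outputs $\hat x\coloneqq \overline{T}$.

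\textbf{Analysis on the good event.} Suppose the clean sample $X$ is $\alpha$-stable, in the sense that $\|\tfrac1n\sum_{i\in R}(x_i-\overline x)\|\le\alpha$ for all $|R|\le 3k$. Then the uncorrupted points of $Y$ form a feasible $T$, so the search succeeds. For any feasible $T$, let $G\coloneqq T\cap\{\text{uncorrupted points}\}$. Then $|T\setminus G|\le k$ and $|X\setminus G|\le 2k$. Write the mean of $G$ in two ways:
\begin{itemize}
\item as $\overline{T}$ minus the contribution of $T\setminus G$, which is controlled by the stability of $T$;
\item as $\overline{x}$ minus the contribution of $X\setminus G$, which is controlled by the stability of $X$.
\end{itemize}
Comparing the two expressions gives $\|\overline T-\overline x\|\lesssim\alpha$. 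The renormalizations $n$ versus $|G|$ only cost factors $1+O(\eta)$, using $\eta<\tfrac12-\Omega(1)$.

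\textbf{The probabilistic step.} Fix a set $R$ with $|R|=r\le 3k$. Decompose
\[ \tfrac1n\sum_{i\in R}(x_i-\overline x)=\tfrac1n\sum_{i\in R}(x_i-\mu)-\tfrac rn(\overline x-\mu). \]
The second term is at most $3\eta\sqrt{(d+\log(1/\beta))/n}$ with probability $1-\beta/2$, which is dominated by $\alpha$. For the first term, Cauchy--Schwarz along a unit direction $v$ gives
\[ \sum_{i\in R}\langle v,x_i-\mu\rangle\le\sqrt{r\,\textstyle\sum_{i\in R}\langle v,x_i-\mu\rangle^2}. \]
So it suffices to show, simultaneously over all $v\in\mathbb S^{d-1}$ and $|R|\le 3k$,
\[ \sum_{i\in R}\langle v,x_i-\mu\rangle^2\lesssim k\log(1/\eta)+d+\log(1/\beta). \]
We would prove this by combining three ingredients:
\begin{itemize}
\item a $\tfrac12$-net over $v$, of size $5^d$;
\item a union bound over the $\binom{n}{3k}\le e^{3k\log(e/\eta)}$ subsets $R$;
\item the $\chi^2_r$ tail bound $\Pr\bigl[\chi^2_r\ge r+2\sqrt{rt}+2t\bigr]\le e^{-t}$, applied with $t\asymp k\log(e/\eta)+d+\log(1/\beta)$.
\end{itemize}
Plugging this in yields
\[ \tfrac1n\sqrt{r\bigl(k\log(1/\eta)+d+\log(1/\beta)\bigr)}\lesssim \eta\sqrt{\log(1/\eta)}+\sqrt{\eta\,\tfrac{d+\log(1/\beta)}{n}}, \]
which is exactly $\alpha$.

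\textbf{Main obstacle.} The delicate point is this uniform deviation bound. A naive bound on the operator norm of $\sum_R (x_i-\mu)(x_i-\mu)^\top$ would lose either the $\sqrt{\eta}$ factor on the $d/n$ term or introduce extra logarithms. The Cauchy--Schwarz reduction to a sum of the top $r$ squared projections is what gives the $\sqrt{\eta d/n}$ scaling. We would also have to check that the net argument for these quadratic forms loses only constants; this should hold by the standard argument comparing $\sup_v$ to its value over the net.
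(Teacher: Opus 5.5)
The paper does not prove \cref{thm:CDMM}. It imports the result from \cite{ChenDMM26} and records only two properties of that proof: the clean sample is feasible for every $\eta$-corruption, and every feasible candidate has mean within $\alpha$ of $\overline{X}$.

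Your resilience-based sketch is a correct, self-contained proof of the statement as written, and it follows that same feasibility template. The deterministic comparison through the common points $G$ is right. The probabilistic step correctly shows where $\sqrt{\eta d/n}$ comes from: comparing to $\overline{x}$ rather than $\mu$ leaves only $\sup_{|R|\le 3k}\|\frac1n\sum_{i\in R}(x_i-\mu)\|$ plus $\frac{r}{n}\|\overline x-\mu\|$.

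There are two bookkeeping points. First, if $X$ is $\alpha$-resilient, the $n-k$ uncorrupted points are only $\alpha(1+\frac{2k}{n-k})$-stable, which is at most $3\alpha$. So the search threshold must be a constant multiple of the clean level. Second, the two-sided comparison costs a factor $\frac{n}{n-2k}$, which is $O(1)$ precisely because $\eta$ is bounded away from $\frac12$.

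Your ``main obstacle'' is also milder than you suggest, and the net is unnecessary. Your Cauchy--Schwarz step is itself an operator-norm bound on $\sum_{i\in R}(x_i-\mu)(x_i-\mu)^\top$. You can bypass it entirely: since $\sum_{i\in R}(x_i-\mu)\sim\mathcal N(0,r\mathbb I_d)$, you have $\|\sum_{i\in R}(x_i-\mu)\|\le\sqrt r(\sqrt d+\sqrt{2t})$ with probability $1-e^{-t}$. A union bound over the $e^{O(k\log(e/\eta))}$ sets $R$, with $t\asymp k\log(e/\eta)+\log(1/\beta)$, then gives the same estimate directly.

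The one substantive difference from the formulation the paper actually uses concerns the candidates. Yours are $(n-k)$-subsets. \cref{app:upper-main} needs candidates that are full datasets at Hamming distance at most $k$ from the input, so that on clean input $X$ itself is the unique zero-distance feasible candidate and $f_{\eta,\beta}(X)=\overline X$ exactly. That exactness is what gives the clean error in \cref{thm:upper-main} without an $\eta\sqrt{\log(1/\eta)}$ term. Your estimator only guarantees $\|f(X)-\overline X\|=O(\alpha)$ on clean input.

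Your argument transfers directly. Call $Y'$ feasible for input $Z$ if $d_H(Y',Z)\le k$ and $Y'$ is $O(\alpha)$-resilient for sets of size at most $2k$. On your good event, $X$ is feasible for every $\eta$-corruption. Any feasible $Y'$ agrees with $X$ on at least $n-2k$ indices, so the same two-sided comparison through those indices yields the second property.
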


We will use a slightly strengthened formulation that follows immediately from the proof of \cref{thm:CDMM}. This estimator operates by searching over candidate cleaned datasets $Y \in (\mathbb{R}^d)^n$ that differ from the corrupted input $\widetilde{X}$ in at most $\eta n$ coordinates that satisfy some constraints. We denote the set of feasible candidates in this search as $\mathcal{F}_{\eta, \beta}(\widetilde{X})$. The proof of \cref{thm:CDMM} establishes the following two uniform properties in the setup of \cref{thm:CDMM}: for every $\eta$-corruption $\widetilde{X}$ of $X$, 
\begin{enumerate}
    \item the clean dataset $X$ is feasible, i.e., $X \in \mathcal{F}_{\eta, \beta}(\widetilde{X})$, and 
    \item every feasible candidate $Y \in \mathcal{F}_{\eta, \beta}(\widetilde{X})$ satisfies $\| \overline{Y} - \overline{X} \|_2 \le \alpha$, where $\overline{Y} \coloneqq \frac{1}{n} \sum_{i = 1}^n Y_i$. 
\end{enumerate}
The original estimator in \cref{thm:CDMM} may output $\perp$ if the feasibility system is empty. We will slightly modify this as follow to ensure $f_{\eta, \beta}$ is a measurable map. Given an input dataset $Z = (Z_1, \dots, Z_n)$, if $\mathcal{F}_{\eta, \beta}(Z) = \varnothing$, we set $f_{\eta, \beta}(Z) \coloneqq \overline{Z}$. Otherwise, we select a feasible candidate $Y^{\ast}(Z)$ that minimizes the Hamming distance to the input: 
\[ Y^{\ast}(Z) \in \argmin_{Y \in \mathcal{F}_{\eta, \beta}(Z)} |\{ i \in [n]\colon Y_i \not= Z_i \}|, \]
where ties are broken arbitrarily, and we set $f_{\eta, \beta}(Z) \coloneqq \overline{Y^{\ast}(Z)}$. We note that this tie-breaking procedure preserves the guarantee of \cref{thm:CDMM} because the guarantee there applies to \textit{all} feasible solutions. 

With this modification in mind, we see that on the high probability event established in \cref{thm:CDMM}, if the input is not corrupted, i.e., $Z = X$, then $f_{\eta, \beta}(X) = \overline{X}$. Indeed, to see this, note that $X \in \mathcal{F}_{\eta, \beta}(X)$ and since this candidate has zero disagreements with the input $X$, it uniquely minimizes the Hamming distance, and thus $Y^{\ast}(X) = X$, giving us $f_{\eta, \beta}(X) = \overline{X}$. 

Using this, we can now bound the empirical sensitivity and clean MSE bound. Indeed, assume the high probability event from \cref{thm:CDMM} holds with the bad event happening with probability at most $\frac{\beta}{2}$. We see that 
\begin{align*}
    S_\eta^{f_{\eta,\beta}}(X) &= \sup_{X' \colon d_H(X, X') \le \lfloor \eta n \rfloor} \| f_{\eta, \beta}(X') - f_{\eta, \beta}(X) \|_2 \\ 
    &= \sup_{X' \colon d_H(X, X') \le \lfloor \eta n \rfloor} \| f_{\eta, \beta}(X') - \overline{X} \|_2 \le \alpha = \widetilde{O} \left( \eta + \sqrt{\eta \cdot \frac{d + \log(1/\beta)}{n}} \right), 
\end{align*}
which is what we wanted. Similarly, on the same event one can note that 
\[ \| f_{\eta, \beta}(X) - \mu \|_2^2 = \| \overline{X} - \mu \|^2_2 = O \left( \frac{d + \log(1/\beta)}{n} \right). \]
with probability $1 - \frac{\beta}{2}$. Union bound establish what we want. 

\section{Proof of~\texorpdfstring{\cref{thm:mean-low}}{}}
\label{app:mean-low}
Throughout this section, for a scalar parameter $\mu' \in \mathbb{R}$, we write $P_{\mu'} \coloneqq \mathcal{N}(\mu', 1)^{\otimes n}$. By the reduction established in \cref{lem:scalar}, to prove the high-dimensional theorem, it suffices to prove the following scalar claim: if the induced one-dimensional estimator $h \colon \mathbb{R}^n \to [0,1]$ satisfies the integrated risk bound
\[ \int_0^1 \mathbb{E}_{X \sim P_{\mu'}}[(h(X) - \mu')^2] \, d\mu' \le \frac{C_{\mathsf{int}}}{n}, \]
then, for a corruption budget $1 \le k = \lfloor \eta n \rfloor \le \kappa \sqrt{n}$, its average empirical sensitivity must be large:
\[ \sup_{\mu' \in [0,1]} \mathbb{E}_{X \sim P_{\mu'}}[S_\eta^h(X)] \ge c_{\mathrm{low}} \eta. \]

To see exactly why this scalar claim implies the full high-dimensional theorem, recall that \cref{lem:scalar} guarantees the existence of a ``good'' unit direction $u \in \mathbb{S}^{d-1}$ and an orthogonal shift $\lambda_{\ast} \in u^\perp$. When we evaluate the original high-dimensional estimator $f$ along the one-dimensional affine line parameterized by $\mu' u + \lambda_{\ast}$, its empirical sensitivity strictly upper-bounds the sensitivity of the induced scalar estimator $h$. Together with Jensen's inequality, this gives us 
\[ \ES_{\eta, 2}(f; \mu' u + \lambda_{\ast}) \ge \left( \mathbb{E}_{X \sim P_{\mu'}}[S_\eta^h(X)^2] \right)^{1/2} \ge \mathbb{E}_{X \sim P_{\mu'}}[S_\eta^h(X)]. \]
Taking the supremum over $\mu' \in [0,1]$ on both sides demonstrates that establishing the $c_{\mathrm{low}} \eta$ lower bound for $h$ immediately yields the required lower bound for $f$, completing the reduction.

\subsection{Statistical Indistinguishability of the Local and Global Shift} 
WLOG we may choose $\eta$ such that $\eta = \frac{\lfloor \eta n \rfloor}{n}$, i.e., $\eta n \in \{ 1, \dots, n \}$. For $\delta \ge 0$ and $\mu' \in \mathbb{R}$, we define the local-shift distribution $Q_{\mu', \delta}$ via the following generative process: draw $X \sim P_{\mu'}$, draw a $k$-subset $I \subset [n]$ uniformly at random (independent of $X$), and output $X^{(I, \delta)} \coloneqq X + \delta e_I$, i.e., this means that
\[ X_i^{(I, \delta)} = \begin{cases} X_i + \delta, &\text{if } i \in I,\\ X_i, &\text{if } i \notin I. \end{cases}\]

The key observation is to show that this local adversarial corruption is statistically indistinguishable from a global mean shift $P_{\mu' + \eta\delta}$. 

\begin{lemma} \label{lem:close-dist-corr}
    For every $\mu' \in \mathbb{R}$ and every $\delta \ge 0$, 
    \[ \chi^2(Q_{\mu', \delta} \| P_{\mu' + \eta \delta }) \le \exp \left( \frac{k^2}{n} (e^{\delta^2} - 1 - \delta^2) \right) - 1. \]
\end{lemma}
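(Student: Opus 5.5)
We will compute the $\chi^2$-divergence directly via the second-moment (``Ingster'') identity for mixtures, then bound a hypergeometric moment generating function. By translation invariance we may take $\mu'=0$; since we assume $\eta n = k$, the target is $P \coloneqq P_{\eta\delta} = \mathcal N(\eta\delta\mathbf 1,\mathbb I_n)$. The local-shift law is the mixture $Q = Q_{0,\delta} = \E_I[\mathcal N(\delta e_I,\mathbb I_n)]$ over uniform $k$-subsets $I$. Writing $L_I \coloneqq d\mathcal N(\delta e_I,\mathbb I_n)/dP$, we have $dQ/dP = \E_I L_I$. Fubini then gives
\[
1+\chi^2(Q\|P) = \E_P\bigl[(\E_I L_I)^2\bigr] = \E_{I,J}\,\E_P[L_I L_J],
\]
where $I,J$ are independent uniform $k$-subsets.

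The next step is the standard Gaussian identity: for mean vectors $a,b,c$ with identity covariance, $\E_{\mathcal N(c,\mathbb I)}\bigl[\tfrac{d\mathcal N(a,\mathbb I)}{d\mathcal N(c,\mathbb I)}\tfrac{d\mathcal N(b,\mathbb I)}{d\mathcal N(c,\mathbb I)}\bigr] = \exp(\langle a-c,b-c\rangle)$. This follows by completing the square. Applying it with $a=\delta e_I$, $b=\delta e_J$, $c=\eta\delta\mathbf 1$ gives
\[
\langle a-c,b-c\rangle=\delta^2\bigl(|I\cap J| - 2\eta k + \eta^2 n\bigr)=\delta^2\Bigl(|I\cap J|-\tfrac{k^2}{n}\Bigr),
\]
using $\eta n = k$. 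Hence
\[
1+\chi^2(Q\|P) = e^{-\delta^2 k^2/n}\,\E\bigl[e^{\delta^2 H}\bigr], \qquad H\coloneqq |I\cap J|.
\]
Here $H$ is hypergeometric: it counts the draws landing in a fixed $k$-set when $k$ items are drawn without replacement from $n$, so its mean is $k^2/n$.

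The only step with any real content is bounding the moment generating function of $H$. For this we will invoke Hoeffding's classical comparison: for convex $\phi$, $\E\phi(H)\le \E\phi(B)$ with $B\sim\Bin(k,k/n)$. Sampling without replacement is dominated in convex order by sampling with replacement. Applying this with $\phi(x)=e^{\delta^2 x}$ gives
\[
\E e^{\delta^2 H}\le \Bigl(1+\tfrac{k}{n}(e^{\delta^2}-1)\Bigr)^k\le \exp\Bigl(\tfrac{k^2}{n}(e^{\delta^2}-1)\Bigr),
\]
using $1+x\le e^x$. Multiplying by $e^{-\delta^2k^2/n}$ yields $1+\chi^2\le \exp\bigl(\tfrac{k^2}{n}(e^{\delta^2}-1-\delta^2)\bigr)$, which is the claim.

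If one prefers not to cite Hoeffding's inequality, a fallback is to write $H=\sum_{i\in J}\mathbf 1\{i\in I\}$ and use the negative association of these indicators. For negatively associated indicators, $\E\prod_i e^{\delta^2 \mathbf 1\{i\in I\}}\le\prod_i \E e^{\delta^2\mathbf 1\{i\in I\}}$, which gives the same binomial bound.
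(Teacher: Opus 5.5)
Your proof is correct and follows the paper's argument essentially step for step: translate to $\mu'=0$, write $1+\chi^2(Q_{0,\delta}\|P_{\eta\delta})$ as $\E_{I,J}\E_P[L_IL_J]$, evaluate each term with the Gaussian likelihood-ratio identity, and reduce to bounding $\E\exp\bigl(\delta^2(|I\cap J|-k^2/n)\bigr)$, using $\eta n=k$ exactly as the paper does. The only difference is that you cite Hoeffding's convex-order comparison (or negative association) to get $\E e^{\delta^2 H}\le\bigl(1+\frac{k}{n}(e^{\delta^2}-1)\bigr)^k$, whereas the paper proves this bound directly by conditioning on $I$ and expanding $e^{\lambda H}=\prod_{i\in I}\bigl(1+(e^{\lambda}-1)\mathbf{1}\{i\in J\}\bigr)=\sum_{R\subseteq I}(e^{\lambda}-1)^{|R|}\mathbf{1}\{R\subseteq J\}$, then using $\Pr(R\subseteq J)=\binom{n-s}{k-s}/\binom{n}{k}\le(k/n)^{s}$ for $|R|=s$, which is a self-contained special case of the inequality you invoke.
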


To prove \cref{lem:close-dist-corr}, we require two elementary auxiliary lemmas. 

\begin{lemma}[Gaussian likelihood-ratio identity] \label{lem:gaussian-lr}
    For any $\theta, \mu , \nu \in \mathbb{R}^n$, if $X \sim \mathcal{N}(\theta, \mathbb{I}_n)$, then 
    \[ \mathbb{E}_{X \sim \mathcal{N}(\theta, \mathbb{I}_n)} \left[ \frac{d\mathcal{N}(\mu, \mathbb{I}_n)}{d\mathcal{N}(\theta, \mathbb{I}_n)}(X) \cdot \frac{d\mathcal{N}(\nu, \mathbb{I}_n)}{d\mathcal{N}(\theta, \mathbb{I}_n)}(X) \right] = \exp(\langle \mu - \theta, \nu - \theta \rangle). \]
\end{lemma}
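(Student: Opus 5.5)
The statement to prove is \cref{lem:gaussian-lr}. I will compute the expectation directly from the explicit Gaussian densities.

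First, write out the two likelihood ratios. With $\varphi_\theta(x) = (2\pi)^{-n/2}\exp(-\tfrac12\norm{x-\theta}^2)$, we have
\[ \frac{d\mathcal N(\mu,\mathbb I_n)}{d\mathcal N(\theta,\mathbb I_n)}(x) = \exp\left(\langle \mu-\theta, x-\theta\rangle - \tfrac12\norm{\mu-\theta}^2\right), \]
and the analogous formula holds for $\nu$. Set $a \coloneqq \mu-\theta$, $b \coloneqq \nu-\theta$, and $W \coloneqq X-\theta \sim \mathcal N(0,\mathbb I_n)$. The product of the two ratios is then
\[ \exp\left(\langle a+b, W\rangle - \tfrac12\norm{a}^2 - \tfrac12\norm{b}^2\right). \]

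Next, apply the Gaussian moment generating function $\E[\exp\langle c, W\rangle] = \exp(\tfrac12\norm{c}^2)$ with $c = a+b$. The expectation of the product becomes
\[ \exp\left(\tfrac12\norm{a+b}^2 - \tfrac12\norm a^2 - \tfrac12\norm b^2\right) = \exp(\langle a,b\rangle). \]
This is exactly $\exp(\langle \mu-\theta, \nu-\theta\rangle)$, which proves the lemma.

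No step here is a real obstacle. The only care needed is in the completing-the-square identity for the density ratio, which is the standard expansion $\norm{x-\theta}^2 - \norm{x-\mu}^2 = 2\langle \mu-\theta, x-\theta\rangle - \norm{\mu-\theta}^2$.
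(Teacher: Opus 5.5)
Your proposal is correct and is essentially the paper's own proof: the paper also writes each likelihood ratio as $\exp(\langle a, G\rangle - \tfrac12\|a\|_2^2)$ with $a=\mu-\theta$, $b=\nu-\theta$, $G = X-\theta$, and then applies the Gaussian moment-generating function with $t=a+b$. It finishes with the same polarization identity $\tfrac12\|a+b\|_2^2-\tfrac12\|a\|_2^2-\tfrac12\|b\|_2^2=\langle a,b\rangle$.
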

\begin{proof}
  Let $a\coloneqq \mu-\theta$ and $b\coloneqq \nu-\theta$. If $X\sim \mathcal N(\theta,\mathbb I_n)$, then $X=\theta+G$ with $G\sim\mathcal N(0,\mathbb I_n)$. The likelihood ratio of $\mathcal N(\mu,\mathbb I_n)$ with respect to $\mathcal N(\theta,\mathbb I_n)$ is $\frac{d\mathcal N(\mu,\mathbb I_n)}{d\mathcal N(\theta,\mathbb I_n)}(X) = \exp\left(\langle a,X-\theta\rangle -\frac12\|a\|_2^2\right) = \exp\left(\langle a,G\rangle -\frac12\|a\|_2^2\right)$. Similarly, $\frac{d\mathcal N(\nu,\mathbb I_n)}{d\mathcal N(\theta,\mathbb I_n)}(X) = \exp\left(\langle b,G\rangle -\frac12\|b\|_2^2\right)$. 
  Therefore, the expected product of the likelihood ratios is $$\mathbb E_{X\sim\mathcal N(\theta,\mathbb I_n)} \left[ \frac{d\mathcal N(\mu,\mathbb I_n)}{d\mathcal N(\theta,\mathbb I_n)}(X) \frac{d\mathcal N(\nu,\mathbb I_n)}{d\mathcal N(\theta,\mathbb I_n)}(X) \right] = \exp\left(-\frac12\|a\|_2^2-\frac12\|b\|_2^2\right) \mathbb E_G \exp\left(\langle a+b,G\rangle\right).$$ 
  Using the standard Gaussian moment-generating function $\mathbb E \exp(\langle t,G\rangle) = \exp(\frac12\|t\|_2^2)$ with $t=a+b$, the expectation evaluates to $\exp\left(-\frac12\|a\|_2^2 -\frac12\|b\|_2^2 +\frac12\|a+b\|_2^2\right)$. Expanding the squared norm gives the identity $\frac12\|a+b\|_2^2 -\frac12\|a\|_2^2 -\frac12\|b\|_2^2 = \langle a,b\rangle$. Substituting back $a=\mu-\theta$ and $b=\nu-\theta$ yields the final expected value $\exp(\langle \mu-\theta,\nu-\theta\rangle)$, as claimed.
\end{proof}

\begin{lemma} \label{lem:hypergeom-mgf}
    Let $A$ and $B$ be independent uniformly random $k$-subsets of $[n]$, and let $H = |A \cap B|$. For every $\lambda \ge 0$, 
    \[ \mathbb{E} \exp \left( \lambda \left( H - \frac{k^2}{n} \right) \right) \le \exp \left( \frac{k^2}{n} (e^{\lambda} - 1 - \lambda) \right). \]
\end{lemma}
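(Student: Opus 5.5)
The plan is to prove a stronger fact: $H$ is dominated in the convex order by a binomial with the same mean, and then compare the binomial MGF with the Poisson one. Condition on $A$. By symmetry, $H = |A\cap B|$ has the hypergeometric law: choose $k$ elements without replacement from an urn of $n$ items, $k$ of which are marked. Its mean is $k\cdot(k/n) = k^2/n$.

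The first step is Hoeffding's classical comparison between sampling without and with replacement. For every convex continuous $\phi$, $\mathbb{E}\phi(H) \le \mathbb{E}\phi(W)$ with $W\sim\Bin(k,k/n)$. Applying this with the convex function $\phi(x)=e^{\lambda x}$ gives
\[ \mathbb{E} e^{\lambda H} \le \left(1 + \frac{k}{n}(e^\lambda - 1)\right)^k . \]
If I want the write-up self-contained rather than citing Hoeffding (1963), I can reprove this step via negative association. Write $H=\sum_{i\in A}\mathbf 1\{i\in B\}$. The indicators of a uniformly random $k$-subset are negatively associated, so for $\lambda\ge0$ the functions $e^{\lambda \mathbf 1\{i\in B\}}$ are nondecreasing and
\[ \mathbb{E}\prod_{i\in A} e^{\lambda\mathbf 1\{i\in B\}} \le \prod_{i\in A}\mathbb{E}e^{\lambda \mathbf 1\{i\in B\}} = \bigl(1+\tfrac kn(e^\lambda-1)\bigr)^k. \]

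The second step uses $1+x\le e^x$. This gives $\mathbb{E}e^{\lambda H}\le \exp\bigl(\frac{k^2}{n}(e^\lambda-1)\bigr)$. Multiplying by $e^{-\lambda k^2/n}$ yields exactly the claimed bound $\exp\bigl(\frac{k^2}{n}(e^\lambda-1-\lambda)\bigr)$. The restriction $\lambda\ge 0$ is used only in the negative-association route; Hoeffding's convex-order statement holds for all real $\lambda$.

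The only nontrivial ingredient is the without-replacement comparison. Citing Hoeffding is the cleanest option. If a self-contained argument is preferred, the negative-association property of uniform $k$-subsets (Joag-Dev and Proschan) is the step that needs a reference or a short proof, for instance via the standard fact that permutation distributions are negatively associated. Everything else is elementary.
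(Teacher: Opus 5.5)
Your proof is correct. It passes through the same intermediate bound as the paper, $\mathbb{E}[e^{\lambda H}\mid A]\le\bigl(1+\frac{k}{n}(e^{\lambda}-1)\bigr)^k$, followed by the same $1+x\le e^x$ step. The only difference is how you justify that binomial-MGF comparison. You import it from Hoeffding's without-replacement convex-order theorem, or from negative association of uniform $k$-subset indicators (Joag-Dev--Proschan); both are genuine external results.

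The paper proves it directly instead. With $c=e^{\lambda}-1\ge 0$ it expands $e^{\lambda H}=\prod_{i\in A}(1+c\,\mathbf{1}\{i\in B\})=\sum_{R\subseteq A}c^{|R|}\,\mathbf{1}\{R\subseteq B\}$. It then uses the explicit count $\mathbb{P}(R\subseteq B)=\binom{n-s}{k-s}/\binom{n}{k}\le (k/n)^s$ for $|R|=s$. This shows that the only dependence property needed is the weak cylinder inequality $\mathbb{P}(R\subseteq B)\le\prod_{i\in R}\mathbb{P}(i\in B)$. For a uniform subset that inequality is a one-line computation, so neither full negative association nor Hoeffding's theorem is required.

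What your Hoeffding route buys is generality. Convex-order domination gives the bound for every convex function, and hence for all real $\lambda$; the step $(1+x)^k\le e^{kx}$ still applies because $x=\frac{k}{n}(e^{\lambda}-1)\ge -1$. The paper's expansion needs $c\ge 0$ so that the bound on each $\mathbb{P}(R\subseteq B)$ transfers to the sum. Like your negative-association variant, it is therefore confined to $\lambda\ge 0$. That is all the paper needs, since the lemma is only applied with $\lambda=\delta^2$.
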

\begin{proof}
Condition on $A$. For $i \in A$, write $X_i := \mathbf{1} \{ i \in B \}$. Then $H = |A \cap B| = \sum_{i \in A} X_i$. Set $c := e^{\lambda} - 1 \ge 0$. Since $X_i \in \{ 0, 1 \}$, we have 
\[ e^{\lambda H} = \prod_{i \in A} (1 + cX_i) = \sum_{R \subseteq A} c^{|R|} \mathbf{1} \{ R \subseteq B \}. \]
If $|R| = s$, then 
\[ \mathbb{P}(R \subseteq B) = \frac{\binom{n - s}{k - s}}{\binom{n}{k}} \le \left( \frac{k}{n} \right)^s. \]
Consequently, we have
\[ \mathbb{E}[e^{\lambda H} \mid A] \le \sum_{s = 0}^k \binom{k}{s} c^s \left( \frac{k}{n} \right)^s = \left( 1 + \frac{k}{n} (e^{\lambda} - 1) \right)^k. \]
Since RHS does not depend on $A$, then the same bound hold for $\mathbb{E}[e^{\lambda H}]$ by averaging over all possible $A$. 
Multiplying by $\exp(-\lambda k^2/n)$ and using $\log(1 + x) \le x$ for $x \ge 0$, we thus obtain 
\begin{align*}
    \mathbb{E} \exp \left( \lambda \left( H - \frac{k^2}{n} \right) \right) &\le \exp \left( - \lambda \frac{k^2}{n} + k \log \left( 1 + \frac{k}{n} (e^{\lambda} - 1) \right) \right) \\ 
    &\le \exp \left( \frac{k^2}{n} (e^{\lambda} - 1 - \lambda) \right), 
\end{align*}

\end{proof}

\begin{proof}[Proof of \cref{lem:close-dist-corr}]
    By translation invariance, it suffices to set $\mu' = 0$. Let $P \coloneqq P_{\eta \delta} = \mathcal{N}(\eta \delta, \mathbb{I}_n)$. The local-shift distribution $Q_{0, \delta}$ is a mixture over the subset $I$, so its likelihood ratio with respect to $P$ is
    \[ \frac{dQ_{0, \delta}}{dP}(X) = \mathbb{E}_I \left[ \frac{d\mathcal{N}(\delta e_I, \mathbb{I}_n)}{dP}(X) \right]. \]
    Using \cref{lem:gaussian-lr} to compute the $\chi^2$ divergence, we expand the square of the mixture:
    \begin{align*}
        \chi^2(Q_{0, \delta} \| P) &= \mathbb{E}_{X \sim P} \left[ \left(\frac{dQ_{0, \delta}}{dP}(X)\right)^2 \right] - 1 \\
        &= \mathbb{E}_{I, J} \left[ \exp(\langle \delta e_I - \eta \delta, \delta e_J - \eta \delta \rangle) \right] - 1.
    \end{align*}
    We evaluate the inner product: $\langle \delta e_I - \eta \delta, \delta e_J - \eta \delta \rangle = \delta^2 |I \cap J| - 2\eta \delta^2 k + n \eta^2 \delta^2$. Since $\eta = k/n$, this simplifies to $\delta^2(|I \cap J| - k^2/n)$. Letting $H = |I \cap J|$ and applying \cref{lem:hypergeom-mgf} with $\lambda = \delta^2$ yields:
    \[ \chi^2(Q_{0, \delta} \| P) = \mathbb{E}_{I, J} \left[ \exp\left( \delta^2 \left(H - \frac{k^2}{n}\right) \right) \right] - 1 \le \exp \left( \frac{k^2}{n} (e^{\delta^2} - 1 - \delta^2) \right) - 1, \]
    concluding the proof.
\end{proof}
\begin{corollary} \label{cor:chi-bound}
    Let $h :\mathbb{R}^n \to [0,1]$ be any bounded measurable estimator. Then 
    \[ \left| \mathbb{E}_{Q_{\mu', \delta}}[h] - \mathbb{E}_{P_{\mu' + \eta\delta}}[h] \right| \le \sqrt{ \mathbb{E}_{P_{\mu' + \eta\delta}}[(h(X) - (\mu' + \eta\delta))^2] \cdot \chi^2(Q_{\mu', \delta} \| P_{\mu' + \eta\delta}) }. \]
\end{corollary}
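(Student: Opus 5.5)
This is a change-of-measure Cauchy--Schwarz argument, with the constant $c \coloneqq \mu' + \eta\delta$ subtracted to center $h$. Write $P \coloneqq P_{\mu'+\eta\delta}$ and $Q \coloneqq Q_{\mu',\delta}$. By \cref{lem:close-dist-corr} we have $Q \ll P$ with finite $\chi^2$-divergence, so the likelihood ratio $L \coloneqq dQ/dP$ exists and satisfies $\mathbb{E}_P[L] = 1$ and $\mathbb{E}_P[(L-1)^2] = \chi^2(Q\|P)$. Since $h$ takes values in $[0,1]$, both expectations in the statement are finite, and we may write
\[ \mathbb{E}_Q[h] - \mathbb{E}_P[h] = \mathbb{E}_P\bigl[h(X)(L(X)-1)\bigr]. \]

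The key observation is that $\mathbb{E}_P[L-1] = 0$. This lets us replace $h$ by $h - c$ for any constant $c$ without changing the left-hand side:
\[ \mathbb{E}_P\bigl[h(L-1)\bigr] = \mathbb{E}_P\bigl[(h - c)(L-1)\bigr]. \]
We take $c = \mu' + \eta\delta$. Applying Cauchy--Schwarz in $L^2(P)$ then gives
\[ \bigl|\mathbb{E}_Q[h] - \mathbb{E}_P[h]\bigr| \le \sqrt{\mathbb{E}_P\bigl[(h - (\mu'+\eta\delta))^2\bigr]}\,\sqrt{\mathbb{E}_P\bigl[(L-1)^2\bigr]}, \]
which is the claimed bound once we substitute $\mathbb{E}_P[(L-1)^2] = \chi^2(Q\|P)$.

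The only point requiring care is integrability. Because $h$ is bounded, $h - c$ lies in $L^2(P)$. By \cref{lem:close-dist-corr}, $L - 1$ also lies in $L^2(P)$, so $(h-c)(L-1)$ is in $L^1(P)$ and all the manipulations above are justified. No step is a real obstacle. The substance of the corollary is choosing to center at the true shifted parameter $\mu' + \eta\delta$: this turns the bound into the estimator's risk at the globally shifted distribution, which is the quantity the integrated risk bound will later control.
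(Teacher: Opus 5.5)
Your proof is correct and is the same argument as the paper's. Both center $h$ at $\mu'+\eta\delta$ using $\mathbb{E}_P[dQ/dP]=1$, write the difference of expectations as $\mathbb{E}_P\bigl[(h-(\mu'+\eta\delta))(dQ/dP-1)\bigr]$, and apply Cauchy--Schwarz in $L^2(P)$; your explicit remarks on absolute continuity and integrability just make precise what the paper leaves implicit.
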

\begin{proof}
    Let $P \coloneqq P_{\mu' + \eta\delta}$ and let $R(X) = h(X) - (\mu' + \eta\delta)$. Applying the Cauchy-Schwarz inequality,
    \begin{align*}
        \left| \mathbb{E}_{Q_{\mu', \delta}}[h] - \mathbb{E}_P[h] \right| &= \left| \mathbb{E}_P \left[ R(X) \left( \frac{dQ_{\mu', \delta}}{dP}(X) - 1 \right) \right] \right| \\
        &\le \sqrt{ \mathbb{E}_P [R(X)^2] \cdot \mathbb{E}_P \left[ \left( \frac{dQ_{\mu', \delta}}{dP}(X) - 1 \right)^2 \right] },
    \end{align*}
    which perfectly matches the desired statement.
\end{proof}

\subsection{Tracking the Global Shift}
Having established indistinguishability, we will now show that the estimator must track the global shift to maintain its uniform MSE accuracy assumption. Indeed, let $t \coloneqq \eta \delta$, where $\delta > 0$ is a small constant chosen later. We let $a(\mu')$ and $b(\mu')$ be the expected output and bias of $h$ respectively. 

To lower-bound the supremum over $\mu'$, we lower-bound the average over a carefully chosen prior. Let $w \colon \mathbb{R} \to \mathbb{R}_{+}$ be a continuously differentiable probability density function supported strictly on $[\epsilon, 1-\epsilon]$ for some fixed $\epsilon > 0$. We assume $t < \epsilon$ so that the shifted density $w(\cdot - t)$ remains entirely supported within $[0,1]$.

By our risk assumption, the average squared bias over the interval satisfies $\int_0^1 b(\mu')^2 \,d\mu' \le \int_0^1 \mathbb{E}_{X \sim P_{\mu'}}[(h(X)-\mu')^2] \,d\mu' \le C_{\mathsf{int}}/n$. Because $w$ is smooth and compactly supported, its derivative $w'$ has finite $L^2$ norm, ensuring $\|w(\cdot - t) - w(\cdot)\|_{L^2} \le t \|w'\|_{L^2}$. 
Therefore, by change of variables, we have 
\begin{align*}
    \int w(\mu') (a(\mu'+t) - a(\mu')) \, d\mu' &= t + \int b(v)(w(v-t) - w(v)) \, dv \\
    &\ge t - \|b\|_{L^2} \|w(\cdot - t) - w(\cdot)\|_{L^2} \\
    &\ge t - \sqrt{\frac{C_{\mathsf{int}}}{n}} \cdot t \|w'\|_{L^2} \\
    &= t - O\left(\frac{t}{\sqrt{n}}\right) \ge t - o(t).
\end{align*}
Thus, on average over the prior $w$, the estimator's mean output is mathematically forced to shift by approximately $t = \eta \delta$.

To finish the proof, notice that because $Q_{\mu', \delta}$ is generated by shifting exactly $k = \lfloor \eta n \rfloor$ coordinates of $X \sim P_{\mu'}$, the resulting dataset $X^{(I, \delta)}$ is a valid $\eta$-corruption. Therefore, we have $S_\eta^h(X) \ge |h(X^{(I, \delta)}) - h(X)|$. By Triangle Inequality, we have 
\begin{align*}
    \mathbb{E}_{X \sim P_{\mu'}}[S_\eta^h(X)] &\ge \mathbb{E}_{X \sim P_{\mu'}, I}[|h(X^{(I, \delta)}) - h(X)|] \\ 
    &\ge \mathbb{E}_{X \sim P_{\mu'}, I}[h(X^{(I, \delta)})] - \mathbb{E}_{X \sim P_{\mu'}}[h(X)] \\ 
    &\ge \mathbb{E}_{X \sim Q_{\mu', \delta}}[h(X)] - \mathbb{E}_{X \sim P_{\mu'}}[h(X)] \\ 
    &= (\mathbb{E}_{X \sim P_{\mu' + t}}[h(X)] - \mathbb{E}_{X \sim P_{\mu'}}[h(X)] ) + (\mathbb{E}_{X \sim Q_{\mu', \delta}}[h(X)] - \mathbb{E}_{X \sim P_{\mu' + t}}[h(X)]) \\ 
    &\ge a(\mu' + t) - a(\mu') - \left| \mathbb{E}_{X \sim Q_{\mu', \delta}}[h(X)] - \mathbb{E}_{X \sim P_{\mu' + t}}[h(X)] \right|
\end{align*}
We now average this inequality over the prior $w(\mu')$ to establish the final lower bound. The first term satisfies $\int w(\mu') (a(\mu'+t) - a(\mu')) \, d\mu' \ge \eta \delta - o(\eta \delta)$, where the $o(\eta \delta)$ term vanishes as $n \to \infty$ due to the smoothness of $w$. For the second term, we apply \cref{cor:chi-bound} and the Cauchy-Schwarz inequality over the prior integral to decouple the estimator's accuracy from the statistical distance between distributions:
\begin{align*}
    \int w(\mu') &\left| \mathbb{E}_{Q_{\mu', \delta}}[h] - \mathbb{E}_{P_{\mu' + t}}[h] \right| \, d\mu' \\
    &\le \int w(\mu') \sqrt{\mathbb{E}_{P_{\mu' + t}}[(h(X) - (\mu' + t))^2] \cdot \chi^2(Q_{\mu', \delta} \| P_{\mu' + t})} \, d\mu' \\
    &\le \sqrt{ \int w(\mu') \mathbb{E}_{P_{\mu' + t}}[(h(X) - (\mu' + t))^2] \, d\mu' } \cdot \sqrt{ \int w(\mu') \chi^2(Q_{\mu', \delta} \| P_{\mu' + t}) \, d\mu' }.
\end{align*}
We evaluate the two components under the square root independently:
\begin{enumerate}
    \item By the uniform MSE assumption on the scalar estimator $h$, we have that variance is bounded by MSE, and thus
    \[ \int w(\mu') \mathbb{E}_{P_{\mu' + t}}[(h(X) - (\mu' + t))^2] \, d\mu' \le \int w(v - t) \mathbb{E}_{P_{v}}[(h(X) - v)^2] \, dv \le \frac{C_{\mathsf{int}}}{n} \| w \|_{\infty}. \]
    \item  Applying Lemma \ref{lem:close-dist-corr} for the regime $k \le \kappa\sqrt{n}$, the divergence satisfies
    \[ \chi^2(Q_{\mu', \delta} \| P_{\mu' + t}) \le \exp\left( \frac{k^2}{n}(e^{\delta^2} - 1 - \delta^2) \right) - 1 \lesssim \frac{k^2 \delta^4}{n}. \]
\end{enumerate}
Substituting these bounds into the Cauchy-Schwarz product yields an error term of:
\[ \sqrt{ \frac{C_{\mathsf{int}}}{n} \| w \|_{\infty} \cdot \frac{k^2 \delta^4}{n} } = \sqrt{C_{\mathsf{int}} \| w \|_{\infty}} \cdot \frac{k \delta^2}{n} = O(\eta \delta^2). \]
Putting this together, the average empirical sensitivity is bounded by:
\[ \int w(\mu') \mathbb{E}_{X \sim P_{\mu'}}[S_\eta^h(X)] \, d\mu' \ge \eta \delta - o(\eta \delta) - O(\eta \delta^2). \]
By choosing the absolute constant $\delta > 0$ to be sufficiently small, the $O(\eta \delta^2)$ term is made strictly smaller than $\eta \delta / 2$. This guarantees an average expected sensitivity of $\Omega(\eta \delta) = \Omega(\eta)$. Since the average sensitivity is lower-bounded by $c_{\mathrm{low}} \eta$, the supremum over the interval must also satisfy
\[ \sup_{\mu' \in [0,1]} \mathbb{E}_{X \sim \mathcal{N}(\mu', 1)^{\otimes n}}[S_\eta^h(X)] \ge c_{\mathrm{low}} \eta. \]
This establishes the scalar claim, which transfers to the high-dimensional estimator $f$ via the orthogonal projection, completing the proof of \cref{thm:mean-low}. 
\section{Proof of~\texorpdfstring{\cref{thm:mean-high}}{}}
\label{app:mean-high}
Throughout this section, for a scalar parameter $\mu' \in \mathbb{R}$, we write $P_{\mu'} \coloneqq \mathcal{N}(\mu', 1)^{\otimes n}$. By the reduction established in \cref{lem:scalar}, to prove the high-dimensional theorem, it suffices to prove the following scalar claim: if the induced one-dimensional estimator $h \colon \mathbb{R}^n \to [0,1]$ satisfies the integrated risk bound
\[ \int_0^1 \mathbb{E}_{X \sim P_{\mu'}}[(h(X) - \mu')^2] \, d\mu' \le \frac{C_{\mathsf{int}}}{n}, \]
then, for a corruption budget $k = \lfloor \eta n \rfloor \ge C_{\mathrm{high}} \log n$, its average empirical sensitivity must be large:
\[ \sup_{\mu' \in [0,1]} \mathbb{E}_{X \sim P_{\mu'}}[S_\eta^h(X)] \ge c_{\mathrm{high}} \eta. \]

To see exactly why this scalar claim implies the full high-dimensional theorem, recall that \cref{lem:scalar} guarantees the existence of a "good" unit direction $u \in \mathbb{S}^{d-1}$ and an orthogonal shift $\lambda_{\ast} \in u^\perp$. When we evaluate the original high-dimensional estimator $f$ along the one-dimensional affine line parameterized by $\mu' u + \lambda_{\ast}$, its empirical sensitivity strictly upper-bounds the sensitivity of the induced scalar estimator $h$. Together with Jensen's inequality, this gives us 
\[ \ES_{\eta, 2}(f; \mu' u + \lambda_{\ast}) \ge \left( \mathbb{E}_{X \sim P_{\mu'}}[S_\eta^h(X)^2] \right)^{1/2} \ge \mathbb{E}_{X \sim P_{\mu'}}[S_\eta^h(X)]. \]
Taking the supremum over $\mu' \in [0,1]$ on both sides demonstrates that establishing the $c_{\mathrm{high}} \eta$ lower bound for $h$ immediately yields the required lower bound for $f$, completing the reduction.

We start with the following lemma about endpoint average bound for scalar estimator bound with integrated MSE bound. 
\begin{lemma} \label{lem:end-avg}
Assume $\eta \in \left( 0, \frac{1}{10} \right)$. If the scalar estimator $h \colon \mathbb{R}^n \to [0,1]$ satisfies the integrated MSE bound
\[ \int_0^1 \mathbb{E}_{X \sim P_{\mu'}}[(h(X) - \mu')^2] \ d\mu'  \le \frac{C_{\mathsf{int}}}{n} \] 
Assume moreover that $n$ is large enough such that $C_{\mathsf{high}} \log n \ge 100 C_{\mathsf{int}}$. Then we have the following endpoint average bounds: 
\[ \frac{1}{\eta} \int_0^{\eta} a(\mu') \ d \mu' \le \frac{1}{3} \ \text{and} \ \frac{1}{\eta} \int_{1-\eta}^1 a(\mu') \ d\mu' \ge \frac{2}{3}.  \]
\end{lemma}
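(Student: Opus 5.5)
The plan is a direct bias computation: by Jensen, each endpoint average of $a$ is close to the corresponding average of $\mu'$, up to an error controlled by the integrated MSE. No coupling or divergence bound is needed here. Recall $a(\mu') = \mathbb{E}_{X\sim P_{\mu'}}[h(X)]$ and $b(\mu') = a(\mu') - \mu'$ from \cref{def:scalar-gaussian-notation}.

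First, by Jensen, $b(\mu')^2 \le \mathbb{E}_{X\sim P_{\mu'}}[(h(X)-\mu')^2]$ for every $\mu'$. Integrating over $[0,1]$ gives $\int_0^1 b(\mu')^2\, d\mu' \le C_{\mathsf{int}}/n$, and in particular the same bound holds over $[0,\eta]$ and over $[1-\eta,1]$.

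Next, for the left endpoint, I decompose $a = \mu' + b$:
\[ \frac{1}{\eta}\int_0^\eta a(\mu')\, d\mu' = \frac{\eta}{2} + \frac{1}{\eta}\int_0^\eta b(\mu')\, d\mu' \le \frac{\eta}{2} + \left(\frac{1}{\eta}\int_0^\eta b(\mu')^2\, d\mu'\right)^{1/2} \le \frac{\eta}{2} + \sqrt{\frac{C_{\mathsf{int}}}{\eta n}}, \]
where the middle step is Cauchy--Schwarz on the average. Since $\eta < 1/10$, the first term is at most $1/20$. For the second term, $\eta n \ge \lfloor \eta n\rfloor = k \ge C_{\mathsf{high}}\log n \ge 100\, C_{\mathsf{int}}$, so $\sqrt{C_{\mathsf{int}}/(\eta n)} \le 1/10$. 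Hence the left average is at most $3/20 < 1/3$.

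The right endpoint is symmetric:
\[ \frac{1}{\eta}\int_{1-\eta}^1 a(\mu')\, d\mu' = 1 - \frac{\eta}{2} + \frac{1}{\eta}\int_{1-\eta}^1 b(\mu')\, d\mu' \ge 1 - \frac{1}{20} - \frac{1}{10} > \frac{2}{3}. \]

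The only delicate point is using the standing assumption $k \ge C_{\mathsf{high}}\log n$ to get $\eta n \gtrsim C_{\mathsf{int}}$; this is what makes the averaged bias over a window of length only $\eta$ small. Without it, the $L^2$ bias bound could concentrate on short intervals.
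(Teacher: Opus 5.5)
Your proof is correct and follows essentially the same route as the paper: average $a(\mu')=\mu'+\text{error}$ over each window, bound the error term by Cauchy--Schwarz against the integrated MSE restricted to that window, and use $\eta n \ge k \ge C_{\mathsf{high}}\log n \ge 100\,C_{\mathsf{int}}$ to make it at most $1/10$. The only cosmetic difference is that you control the signed bias $b(\mu')$ via Jensen, whereas the paper bounds $a(\mu') \le \mu' + \mathbb{E}|h(X)-\mu'|$ directly. You are also right that the bound depends on the regime assumption $k \ge C_{\mathsf{high}}\log n$, which the lemma statement leaves implicit and the paper's proof likewise invokes.
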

\begin{proof}
By the integrated MSE assumption, 
\[ \frac{1}{\eta} \int_0^{\eta} \mathbb{E}_{X \sim P_{\mu'}} [(h(X) - \mu')^2] \ d\mu' \le \frac{C_{\mathsf{int}}}{\eta n} \le \frac{1}{100}, \]
by our choice of $n$ large enough so that $\eta n \ge k  \ge C_{\mathsf{high}} \log n \ge 100 C_{\mathsf{int}}$. Similarly, we obtain
\[ \frac{1}{\eta} \int_{1 - \eta}^1 \mathbb{E}_{X \sim P_{\mu'}}[(h(X) - \mu')^2] \ d\mu' \le \frac{1}{100}. \]
Now, by noticing that for $\mu' \in [0,\eta]$, we have $a(\mu') \le \mu' + \mathbb{E}_{X \sim P_{\mu'}}[|h(X) - \mu'|]$. Therefore,
\begin{align*}
    \frac{1}{\eta} \int_0^{\eta} a(\mu') \ d\mu' &\le \frac{1}{\eta} \int_0^{\eta} \mu' \ d\mu' + \frac{1}{\eta} \int_0^{\eta} \mathbb{E}_{X \sim P_{\mu'}}[|h(X) - \mu'|] \ d\mu ' \\ 
    &\le \frac{\eta}{2} + \left( \frac{1}{\eta} \int_0^{\eta} \mathbb{E}_{X \sim P_{\mu'}} [(h(X) - \mu')^2] \ d\mu' \right)^{1/2} \\ 
    &\le \frac{\eta}{2} + \frac{1}{10} \le \frac{1}{3}. 
\end{align*}
Similarly, for $\mu' \in [1 - \eta, 1]$, we have $a(\mu') \ge \mu' - \mathbb{E}_{X \sim P_{\mu'}}[|h(X) - \mu'|]$. Therefore, 
\begin{align*}
    \frac{1}{\eta} \int_{1 - \eta}^1 a(\mu') \ d\mu' &\ge \frac{1}{\eta} \int_{1 - \eta}^1 \mu' \ d\mu' - \frac{1}{\eta} \int_{1 - \eta}^1 \mathbb{E}_{X \sim P_{\mu'}}[|h(X) - \mu'|] \ d\mu' \\ 
    &\ge 1 - \frac{\eta}{2} - \left( \frac{1}{\eta} \int_{1 - \eta}^1 \mathbb{E}_{X \sim P_{\mu'}}[(h(X) - \mu')^2] \ d\mu' \right)^{1/2} \\ 
    &\ge 1 - \frac{\eta}{2} - \frac{1}{10} \ge \frac{2}{3}. 
\end{align*}
This proves the lemma. 
\end{proof}
\subsection{Average Mean Displacement}
Take $n$ sufficiently large such that the condition in \cref{lem:end-avg} holds. We first quantify the global displacement of the estimator's expected mean under a shift of $\eta$. Indeed, we have 
\begin{align*}
    \int_0^{1 - \eta} (a(\mu' + \eta) - a(\mu')) \ d\mu' &= \int_{\eta}^1 a(\mu') \ d\mu' - \int_0^{1 - \eta} a(\mu') \ d\mu' \\ 
    &= \int_{1 - \eta}^1 a(\mu') \ d\mu' - \int_0^{\eta} a(\mu') \ d\mu' \ge \frac{\eta}{3}. 
\end{align*}
Let $\mathcal{U}$ denote the uniform probability measure over the interval $[0,1-\eta]$. We have
\[ \mathbb{E}_{\mu' \sim \mathcal{U}}[a(\mu' + \eta) - a(\mu')] = \frac{1}{1 - \eta} \int_0^{1 - \eta} (a(\mu' + \eta) - a(\mu')) \ d\mu' \ge \frac{\eta}{3}. \]

\subsection{Coordinate-wise Maximal Coupling}
For any fixed $\mu' \in [0,1-\eta]$, we construct a joint distribution over $(X, X')$ via independent coordinatewise maximal couplings as in \cref{thm:maximal-coupling} of the marginals $\mathcal{N}(\mu', 1)$ and $\mathcal{N}(\mu' + \eta, 1)$ respectively. In this case, $X \sim P_{\mu'}$ and $X' \sim P_{\mu' + \eta}$. Let $D \coloneqq \# \{ i \in [n] \colon X_i \not= X_i' \}$ be the number of coordinates where the samples differ. By \cref{thm:maximal-coupling}, $D \sim \operatorname{Bin}(n, p)$ where the success probability $p$ satisfies $p < \eta$. Let $k \coloneqq \lfloor \eta n \rfloor$, we see that there exists an absolute constant $\rho \in (0,1)$ such that $\mathbb{E}[D] \le \rho k$ for all $\eta \in (0,1/10]$. Therefore, a standard Chernoff bound argument as in \cref{thm:chernoff} gives us a constant $c_{\mathsf{Ch}} > 0$ such that $\mathbb{P}(D > k) \le \exp(-c_{\mathsf{Ch}} k)$. We may choose $n$ and $C_{\mathsf{high}}$ large enough such that in the regime $k \ge C_{\mathsf{high}} \log n$, we are guaranteed that $\mathbb{P}(D > k) \le \eta/6$ for all $n$. 

On the event $\{ D \le k \}$, the datasets $X$ and $X'$ differ by at most $k$ points. Therefore, $X'$ is a valid $\eta$-corruption of $X$. By definition, $S_\eta^h(X) \ge |h(X') - h(X)| \cdot \mathbf{1} \{ D \le k \}$. Since $h \in [0,1]$, we have $|h(X') - h(X)| \le 1$. Therefore, 
\begin{align*}
    \mathbb{E}_{X \sim P_{\mu'}}[S_\eta^h(X)] &\ge \mathbb{E} \left[ |h(X') - h(X)| \cdot \mathbf{1} \{ D \le k \} \right]  \\ 
    &\ge \mathbb{E}[(h(X') - h(X)) \cdot \mathbf{1} \{ D \le k \}] \\ 
    &= \mathbb{E}[h(X') - h(X)] - \mathbb{E}[(h(X') - h(X)) \cdot \mathbf{1} \{ D > k \}] \\ 
    &\ge a(\mu' + \eta) - a(\mu') - \mathbb{P}(D > k).
\end{align*}
To conclude the proof, we average this final inequality over the uniform prior $\mathcal{U}$. This gives us 
\begin{align*}
    \mathbb{E}_{\mu' \sim \mathcal{U}} \mathbb{E}_{X \sim P_{\mu'}}[S_\eta^h(X)] &\ge \mathbb{E}_{\mu' \sim \mathcal{U}}[a(\mu' + \eta) - a(\mu')] - \mathbb{P}(D > k) \\ 
    &\ge \frac{\eta}{3} - \frac{\eta}{6} = \frac{\eta}{6}. 
\end{align*}
This implies $\sup_{\mu' \in [0,1]} \mathbb{E}_{X \sim P_{\mu'}}[S_\eta^h(X)] \ge \frac{\eta}{6}$, which is what we wanted. By \cref{lem:scalar}, we thus have $\sup_{\mu \in \mathbb{R}^d} \ES_{\eta, 2}(f; \mu) \ge \frac{\eta}{6}$, which proves \cref{thm:mean-high}. 

\section{Proof of~\texorpdfstring{\cref{thm:variance-main}}{}}
\label{app:variance-main}
The proof of \cref{thm:variance-main} can be split into two parts. First, we show that uniform MSE accuracy forces the estimator to have output variance $\Omega(d/n)$ at some parameter point. Then we convert this clean-sample variance into a corruption strategy via block resampling. 
\subsection{Existence of large variance parameter}
\label{app:large-variance}
We first prove the statistical part of the variance obstruction. To do this, we give two proofs. The first is the shorter Cramer-Rao proof, which assumes a regularity condition on the mean map. The second is the Hammersley-Chapman-Robbins proof, which may be applied to broader distribution classes, without using similar regularity assumptions.

Throughout, for $X = (X_1, \dots, X_n) \sim \mathcal{N}(\mu, \mathbb{I}_d)^{\otimes n}$, we write
\[ \Var_{\mu}(f) \coloneqq \Var_{\mu}(f(X)) \coloneqq \mathbb{E}_{X} \| f(X) - \mathbb{E}_{X}f(X) \|^2. \]

\begin{proposition} \label{prop:large-variance}
Let $f \colon (\mathbb{R}^d)^n \to \mathbb{R}^d$ be measurable and suppose that
\[ \sup_{\mu \in \mathbb{R}^d} \mathbb{E}_{X} \| f(X) - \mu \|^2 \le C_{\mathsf{MSE}} \frac{d}{n}. \]
Then there exists a universal constant $c_{\mathrm{var}} > 0$ and a parameter $\mu^{\ast} \in \mathbb{R}^d$ such that
\[ \Var_{\mu^{\ast}}(f) \ge c_{\mathrm{var}} \frac{d}{n}. \]
\end{proposition}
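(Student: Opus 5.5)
The plan is to work coordinatewise and combine the Hammersley--Chapman--Robbins inequality (\cref{thm:hcr}) with a telescoping/Cauchy--Schwarz argument over a grid, then average over the other coordinates.

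Write $f = (f_1,\dots,f_d)$ and note that $\Var_\mu(f) = \sum_{j=1}^d \Var_\mu(f_j)$. It therefore suffices to find a single $\mu^\ast$ at which the coordinatewise variances sum to $\Omega(d/n)$. I will obtain this by averaging over a finite grid and showing the average variance is large. Fix a large constant $L$ (depending on $C_{\mathsf{MSE}}$) and set $N \coloneqq \lceil L\sqrt n\rceil$. Let $G \coloneqq \{0, n^{-1/2}, 2n^{-1/2},\dots, N n^{-1/2}\}^d$ be a grid of spacing $n^{-1/2}$ and side length about $L$.

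\textbf{One-dimensional step.} Fix a coordinate $j$ and values $\mu_{-j}$ of the other coordinates on the grid. Consider the line $\mu^{(m)} = \mu_{-j} + m n^{-1/2} e_j$ for $m=0,\dots,N$, and let $a_m \coloneqq \E_{\mu^{(m)}} f_j(X)$. Uniform MSE accuracy applied to coordinate $j$ alone gives only $\E(f_j - \mu_j)^2 \le C_{\mathsf{MSE}} d/n$, which is too weak pointwise. So the lower-bound argument must also be summed over $j$; see below.

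Adjacent parameters satisfy $\chi^2(P_{\mu^{(m+1)}}\|P_{\mu^{(m)}}) = e^{n\cdot n^{-1}}-1 = e-1$. Hence \cref{thm:hcr} gives
\[ \Var_{\mu^{(m)}}(f_j) \ge (a_{m+1}-a_m)^2/(e-1). \]
Summing over $m$ and applying Cauchy--Schwarz yields
\[ \sum_{m<N}\Var_{\mu^{(m)}}(f_j) \ge \frac{(a_N-a_0)^2}{(e-1)N}. \]

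\textbf{Aggregating over coordinates and lines.} Sum the previous inequality over $j$ and over all lines in direction $j$; there are $(N+1)^{d-1}$ such lines for each $j$. The left side is then at most $\sum_{\mu\in G}\Var_\mu(f)$. For the right side, use $(a_N-a_0)^2 \ge \tfrac12 (L)^2 - (b_N-b_0)^2$, where $b$ denotes the bias in coordinate $j$ and $a_N - a_0 = L' + b_N - b_0$ with $L' = Nn^{-1/2}\ge L$. Summed over $j$ and over lines, the bias terms are controlled by
\[ \sum_j \sum_{\text{lines}} (b_N^2 + b_0^2) \le 2\sum_{\mu\in G}\|\E_\mu f - \mu\|^2 \le 2|G|\,C_{\mathsf{MSE}}\,d/n. \]
Here I am counting each grid point once per direction in which it is an endpoint, which is at most once per $j$ per endpoint role. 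The main term is $d(N+1)^{d-1}L^2/2$.

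Dividing by $|G| = (N+1)^d$, the average over $G$ of $\Var_\mu(f)$ is at least
\[ \frac{1}{(e-1)N(N+1)}\Big(\tfrac{d L^2 (N+1)}{2} - 4 (N+1)\, C_{\mathsf{MSE}}\, d/n\cdot(\text{const})\Big) \approx \frac{d}{n}\cdot\frac{1}{2(e-1)}\Big(1 - O\big(C_{\mathsf{MSE}}/(nL^2)\big)\Big). \]
The endpoint bias term is per-line and therefore carries a factor $1/N$ relative to the main term, which makes it negligible. This gives $\Var \gtrsim d/n$ with a universal constant, and some grid point $\mu^\ast$ attains at least the average.

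\textbf{Main obstacle.} The delicate step is the bookkeeping in the aggregation. Only the total bias $\|\E f-\mu\|^2$ is controlled, not the per-coordinate bias, so the endpoint terms must be charged carefully against the grid average. One must also check that \cref{thm:hcr} applies, which needs $f_j\in L^2$; this follows from the finite MSE.

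An alternative, if the counting is awkward, is to take the endpoints at distance $L$ apart with a Gaussian prior, as in \cref{lem:good-section}. The grid argument above, however, seems sufficient and yields a constant independent of $C_{\mathsf{MSE}}$ once $L$ is large.
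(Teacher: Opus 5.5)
Your argument is correct and is essentially the paper's. Both use HCR between neighbouring points of an $n^{-1/2}$-spaced grid (where $\chi^2=e-1$), then Cauchy--Schwarz along each line, then summation over all coordinate directions with the remaining coordinates ranging over a common product grid. The paper packages that last step as \cref{lem:scalar-averaging-principle}, with the auxiliary variable $W$, but it is the same Fubini computation.

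The one real difference is how you handle the endpoint biases. You say the per-coordinate bound $|b_j(\mu)|\le\sqrt{C_{\mathsf{MSE}}d/n}$ is too weak, but the paper uses exactly that bound. It stretches the grid to side length $L=4\max\{\sqrt{C_{\mathsf{MSE}}d/n},n^{-1/2}\}$, so the endpoint bias is at most $L/4$ pointwise. It then uses the fact that the averaged HCR bound $L^2/(4(e-1)J(J+1))\asymp h^2=1/n$ does not depend on the length of the line.

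You instead keep the side length a constant $L\gtrsim C_{\mathsf{MSE}}$ and charge the endpoint biases in aggregate against $\sum_{\mu\in G}\|b(\mu)\|^2\le |G|\,C_{\mathsf{MSE}}d/n$. This also works. Its mild advantage is a grid that does not depend on $d/n$. The paper's pointwise version is more modular: the same lifting lemma accepts the Cram\'er--Rao scalar argument unchanged.

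Your displayed average contains a bookkeeping slip. After dividing by $|G|=(N+1)^d$, the bracket should read $\frac{dL'^2}{2}-c\,(N+1)C_{\mathsf{MSE}}\frac{d}{n}$ with $L'=Nn^{-1/2}$; there is no extra factor $N+1$ on the main term. So the relative error is $O(C_{\mathsf{MSE}}/N)=O(C_{\mathsf{MSE}}/(L\sqrt n))$, not $O(C_{\mathsf{MSE}}/(nL^2))$. This agrees with your verbal ``factor $1/N$'' remark. Since $N\ge L$, taking $L$ a sufficiently large multiple of $C_{\mathsf{MSE}}$ still makes the error at most a quarter of the main term. That gives a grid-averaged variance of at least $\frac{d}{8(e-1)n}$, so the conclusion stands with a universal constant.
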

We will first show two different proofs for the scalar version of \cref{prop:large-variance}, then we will use the same argument to lift this to the $d$-dimensional version. Indeed, for the scalar version: consider $g = g(Z_1, \dots, Z_n)$ under $Z_1, \dots, Z_n \stackrel{\text{iid}}{\sim} \mathcal{N}(\mu', 1)$, where we write $a(\mu') \coloneqq \mathbb{E}_{Z}[g(Z)]$ and $\Var_{\mu'}(g) \coloneqq \mathbb{E}_{Z}[(g - a(\mu'))^2]$.

\subsubsection{Proof I of Scalar Version of \cref{prop:large-variance}}
This proof is the Cramer-Rao proof, which requires the standard regularity condition that the mean map $a(\mu')$ is absolutely continuous and satisfies the Gaussian score identity.

Define the bias $b(\mu')  := a(\mu') - \mu'$. By Jensen's inequality and the uniform MSE accuracy bound, we have
\[ |b(\mu')| = |\mathbb{E}_Z[g(Z) - \mu']| \le \left(\mathbb{E}_Z[(g(Z) - \mu')^2] \right)^{1/2} \le \sqrt{\frac{C_{\mathsf{MSE}}}{n}}\]
for every $\mu'$. Therefore, for $L \coloneqq 4 \max \left \{ \frac{1}{\sqrt{n}}, \sqrt{\frac{C_{\mathsf{MSE}}}{n}} \right \}$, we have 
\[ a(L) - a(0) = L + b(L) - b(0) \ge L - 2 \sqrt{\frac{C_{\mathsf{MSE}}}{n}} \ge \frac{L}{2}. \]
By \cref{thm:cramer-rao}, applied to the Gaussian location family
$P_{\mu'}=\mathcal N(\mu',1)^{\otimes n}$ and the statistic $T=g$, the Fisher information is
$I(\mu')=n$. Therefore, under the stated regularity assumptions on the mean map
$a(\mu')=\mathbb E_{\mu'}g$, we have
\[
    \Var_{\mu'}(g)
    \ge
    \frac{(a'(\mu'))^2}{n}.
\]
Averaging over $[0,L]$, we obtain 
\begin{align*}
    \frac{1}{L} \int_0^L \Var_{\mu'}(g) \ d \mu' &\ge \frac{1}{nL} \int_0^L a'(\mu')^2 \ d\mu' \\ 
    &\ge \frac{1}{n} \left( \frac{1}{L} \int_0^L a'(\mu') \ d \mu' \right)^2 \\ 
    &= \frac{1}{n} \left( \frac{a(L) - a(0)}{L} \right)^2 \ge \frac{1}{4n}. 
\end{align*}
This proves the claim for $d = 1$. 
\subsubsection{Proof II of Scalar Version of \cref{prop:large-variance}}
We now give the Hammersley-Chapman-Robbins proof. This proof avoids any differentiability assumptions. Let us consider $h \coloneqq \frac{1}{\sqrt{n}}$, $L \coloneqq 4 \max \left \{ \sqrt{\frac{C_{\mathsf{MSE}}}{n}}, h \right \}$ and $J \coloneqq \left \lceil \frac{L}{h} \right \rceil$. 

Now, we consider the grid points $\mu_{\ell}' \coloneqq \ell h$ for $\ell = 0, \dots, J$. Since $\mu_J' = Jh \ge L$ and $|b(\mu')| \le \sqrt{\frac{C_{\mathsf{MSE}}}{n}}$, it follows that
\[ a(\mu_J') - a(\mu_0') = \mu_J' - \mu_0' + b(\mu_J') - b(\mu_0') \ge L - 2 \sqrt{\frac{C_{\mathsf{MSE}}}{n}} \ge \frac{L}{2}. \]
For each $\ell = 0, \dots, J - 1$, we let $P_{\ell}$ and $Q_{\ell}$ denote the probability distribution $\mathcal{N}(\mu_{\ell}', 1)^{\otimes n}$ and $\mathcal{N}(\mu_{\ell + 1}', 1)^{\otimes n}$ respectively. Because $\mu_{\ell + 1}' - \mu_{\ell}' = h = \frac{1}{\sqrt{n}}$, we have 
\[ \chi^2(Q_{\ell} \| P_{\ell}) = \exp(nh^2) - 1 = e - 1. \]
Applying \cref{thm:hcr} with $P = P_{\ell} = \mathcal{N}(\mu_{\ell}', 1)^{\otimes n}, Q = Q_{\ell} = \mathcal{N}(\mu_{\ell + 1}', 1)^{\otimes n}$ and $T = g$, we obtain
\[ \Var_{\mu_{\ell}'}(g) \ge \frac{(a(\mu_{\ell + 1}') - a(\mu_{\ell}'))^2}{e - 1}. \]
Summing over $\ell = 0, \dots, J - 1$ and apply Cauchy Schwarz gives us 
\[ \sum_{\ell = 0}^{J - 1} \Var_{\mu_{\ell}'}(g) \ge \frac{1}{e - 1} \sum_{\ell = 0}^{J - 1} (a(\mu_{\ell + 1}') - a(\mu_{\ell}'))^2 \ge \frac{1}{(e -  1)J} (a(\mu_J') - a(\mu_0'))^2 \ge \frac{L^2}{4(e - 1) J}. \]
Consequently, the average variance satisfies
\[ \frac{1}{J + 1} \sum_{\ell = 0}^{J} \Var_{\mu_{\ell}'}(g) \ge \frac{L^2}{4(e - 1) J(J + 1)} \ge \frac{2}{15(e - 1)} \cdot \frac{1}{n} \]
This proves the claim for $d = 1$. 
\subsubsection{Proof of \cref{prop:large-variance}}
We will now state the following general lemma that allows one to lift our argument from one dimension to higher dimension, as long as one can get an averaging measure over scalar parameters under which the expected output variance is $\Omega(1/n)$.

\begin{lemma}
\label{lem:scalar-averaging-principle}
Fix $C_{\mathsf{MSE}} > 0$ and set $C_0 := C_{\mathsf{MSE}} d$. Suppose there is a probability measure $\nu_{C_0}$ on $\mathbb{R}$ and a universal constant $c_{\mathsf{sc}} > 0$ with the following property: For every auxiliary random element $W$ whose law does not depend on $\theta$, and every real-valued statistics $g = g(Z_1, \dots, Z_n, W)$, with $Z_1, \dots, Z_n \stackrel{\text{iid}}{\sim} \mathcal{N}(\theta, 1)$ independent of $W$, then 
\[ \sup_{\theta \in \mathbb{R}} \mathbb{E}[(g - \theta)^2] \le \frac{C_0}{n} \implies \int \Var_\theta(g) \ d\nu_{C_0}(\theta) \ge \frac{c_{\mathsf{sc}}}{n} \]
holds, where the variance is over both $Z_1, \dots, Z_n$ and $W$. Then every measurable estimator $f: (\mathbb{R}^d)^n \to \mathbb{R}^d$ satisfying
\[ \sup_{\mu \in \mathbb{R}^d} \mathbb{E} \| f(X) - \mu \|_2^2 \le C_{\mathsf{MSE}} \frac{d}{n} \]
has a parameter $\mu^{\ast} \in \mathbb{R}^d$ such that $\Var_{\mu^{\ast}}(f) \ge c_{\mathsf{sc}} \frac{d}{n}$. 

\end{lemma}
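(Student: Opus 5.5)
The plan is to reduce the $d$-dimensional variance bound to $d$ scalar problems, one per coordinate, and average over a product prior. Let $\mu$ be drawn with independent coordinates $\mu_j \sim \nu_{C_0}$, $j=1,\dots,d$. It suffices to show
\[ \mathbb{E}_{\mu}\,\Var_{\mu}(f) = \sum_{j=1}^d \mathbb{E}_{\mu}\,\Var_{\mu}(f_j) \ge c_{\mathsf{sc}}\frac{d}{n}, \]
since then some $\mu^\ast$ in the support attains at least the average. To get this, fix a coordinate $j$ and build a scalar statistic to which the hypothesis applies.

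For coordinate $j$, set $\theta = \mu_j$. Let $W$ consist of the other mean coordinates $\mu_{-j} \sim \nu_{C_0}^{\otimes(d-1)}$ together with independent standard Gaussian noise $\xi_{i,-j}$, for $i\in[n]$, in the other $d-1$ coordinates. The law of $W$ does not depend on $\theta$. Given $Z_1,\dots,Z_n \stackrel{\text{iid}}{\sim}\mathcal N(\theta,1)$, form $X_i$ with $j$-th coordinate $Z_i$ and remaining coordinates $\mu_{-j}+\xi_{i,-j}$. Conditionally on $\mu_{-j}$, this gives $X\sim\mathcal N((\theta,\mu_{-j}),\mathbb I_d)^{\otimes n}$. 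Define $g = f_j(X)$, a function of $(Z,W)$. Its risk satisfies
\[ \mathbb{E}(g-\theta)^2 = \mathbb{E}_{\mu_{-j}}\mathbb{E}\,(f_j(X)-\mu_j)^2 \le \sup_{\mu}\mathbb{E}\|f(X)-\mu\|^2 \le C_{\mathsf{MSE}}\frac{d}{n} = \frac{C_0}{n} \]
for every $\theta$. This is exactly why $C_0 = C_{\mathsf{MSE}} d$ and not $C_{\mathsf{MSE}}$: we only bound one coordinate's error by the total error. The hypothesis then gives $\int \Var_\theta(g)\,d\nu_{C_0}(\theta) \ge c_{\mathsf{sc}}/n$.

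The step needing care is relating $\Var_\theta(g)$, where the variance is taken jointly over $Z$ and $W$ (so it includes the randomness of $\mu_{-j}$), to the per-parameter variances $\Var_\mu(f_j)$, which hold $\mu_{-j}$ fixed. By the law of total variance,
\[ \Var_\theta(g) = \mathbb{E}_{\mu_{-j}}\Var_{(\theta,\mu_{-j})}(f_j) + \Var_{\mu_{-j}}\bigl(\mathbb{E}[f_j\mid\mu_{-j}]\bigr). \]
The second term has the wrong sign, so it must be controlled. Write $\mathbb{E}[f_j\mid\mu_{-j}] = \theta + b_j(\theta,\mu_{-j})$, where $b_j$ is the bias. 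The second term then equals $\Var_{\mu_{-j}}(b_j)\le \mathbb{E}\,b_j^2$, and summed over $j$ this is at most $\mathbb{E}_\mu\|\text{bias}(\mu)\|^2$. This quantity is only bounded by $C_{\mathsf{MSE}}d/n$ and need not be small, which is the main obstacle.

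I would first try to avoid the obstacle. One option is to condition on $\mu_{-j}$ inside $W$, that is, fix $\mu_{-j}$ deterministically rather than randomly. Then $W$ is only the noise, and $g$ is a valid scalar statistic for each fixed $\mu_{-j}$. Applying the hypothesis for each fixed $\mu_{-j}$ gives $\int \Var_{(\theta,\mu_{-j})}(f_j)\,d\nu_{C_0}(\theta)\ge c_{\mathsf{sc}}/n$, with risk bound $\sup_\theta\mathbb{E}(f_j-\theta)^2\le C_0/n$, which holds pointwise in $\mu_{-j}$. Then average over $\mu_{-j}\sim\nu_{C_0}^{\otimes(d-1)}$ and sum over $j$; Fubini identifies each averaged term with $\mathbb{E}_{\mu\sim\nu_{C_0}^{\otimes d}}\Var_\mu(f_j)$. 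Summing yields $\mathbb{E}_\mu\Var_\mu(f)\ge c_{\mathsf{sc}}d/n$, and hence the existence of $\mu^\ast$. This removes the cross term entirely. The only technical check is measurability of $\mu\mapsto\Var_\mu(f)$, which holds by Fubini since $f$ is jointly measurable and the Gaussian density is continuous in $\mu$.
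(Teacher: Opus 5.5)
Your final argument is exactly the paper's proof, and it is correct: fix $\mu_{-j}$ deterministically so that $W$ is just the remaining sample coordinates, apply the scalar hypothesis for each fixed $\mu_{-j}$, average over $\nu_{C_0}^{\otimes(d-1)}$ by Fubini, and sum over $j$. Your remark on the abandoned first attempt is also right: randomizing $\mu_{-j}$ inside $W$ leaves a between-group bias-variance term of the same order $d/n$ as the target, and that is why the deterministic choice is needed.
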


\begin{proof}
Let $C_0 \coloneqq C_{\mathsf{MSE}}d$, and let $\nu \coloneqq \nu_{C_0}$ be the scalar averaging measure supplied by the assumption. We average the parameters $\mu\in\mathbb{R}^d$ under the product measure $\nu^{\otimes d}$. 

Fix a coordinate $j\in[d]$, and fix the remaining coordinates $\mu_{-j} = (\mu_1,\dots,\mu_{j-1},\mu_{j+1},\dots,\mu_d) \in \mathbb{R}^{d-1}$. Consider the scalar submodel $\mu(\theta) \coloneqq (\mu_1,\dots,\mu_{j-1},\theta,\mu_{j+1},\dots,\mu_d)$ for $\theta\in\mathbb{R}$. For $X\sim\mathcal{N}(\mu(\theta),I_d)^{\otimes n}$, let $Z_i \coloneqq X_{i,j}$ for $i\in[n]$, and let $W \coloneqq (X_{i,\ell})_{1\le i\le n,\ \ell\ne j}$ denote all other coordinates of the sample. Then $Z_1,\dots,Z_n\stackrel{\mathrm{iid}}{\sim}\mathcal{N}(\theta,1)$ and $W$ is independent of $Z$. Crucially, because $\mu_{-j}$ is fixed, the law of $W$ does not depend on $\theta$.

Define the scalar statistic $g_{j,\mu_{-j}}(Z_1,\dots,Z_n,W) \coloneqq f_j(X)$. For every $\theta\in\mathbb{R}$, the uniform MSE assumption implies that the error of this single coordinate is bounded by the total vector error: $\mathbb{E}[(g_{j,\mu_{-j}}-\theta)^2] \le \mathbb{E} \|f(X)-\mu(\theta)\|_2^2 \le C_{\mathsf{MSE}}\frac{d}{n} = \frac{C_0}{n}$. Therefore, the scalar averaging principle applies, yielding $\int \mathrm{Var}_{\mu(\theta)}(f_j(X)) \,d\nu(\theta) \ge c_{\mathrm{sc}}/n$ for every fixed $\mu_{-j}$. Averaging this inequality over $\mu_{-j}\sim\nu^{\otimes(d-1)}$ via Fubini's theorem yields $\mathbb{E}_{\mu\sim\nu^{\otimes d}} [\mathrm{Var}_{\mu}(f_j(X))] \ge c_{\mathrm{sc}}/n$. 

Summing over all $d$ coordinates gives the total expected variance:
\[ \mathbb{E}_{\mu\sim\nu^{\otimes d}} [\mathrm{Var}_{\mu}(f)] = \sum_{j=1}^d \mathbb{E}_{\mu\sim\nu^{\otimes d}} [\mathrm{Var}_{\mu}(f_j(X))] \ge c_{\mathrm{sc}}\frac{d}{n}. \]
Since the expected variance under $\mu\sim\nu^{\otimes d}$ is at least $c_{\mathrm{sc}}d/n$, there must exist at least one parameter $\mu^\ast\in\mathbb{R}^d$ such that $\mathrm{Var}_{\mu^\ast}(f) \ge c_{\mathrm{sc}}d/n$.
\end{proof}

To finish this, we may instantiate the above lifting principle theorem with two different scalar proof that we have established. The Cramer-Rao scalar proof proves the scalar averaging principle using the continuous measure $\nu_{C_0} = \mathrm{Unif}([0,L])$, $c_{\mathrm{sc}} = \frac{1}{4}$, where $L = 4 \max \{ \sqrt{C_0/n}, 1/\sqrt{n} \}$, provided that the scalar mean map satisfies the Cramer Rao regularity assumptions. 

Alternatively, the Hammersley--Chapman--Robbins scalar proof establishes the same principle using the discrete measure $\nu_{C_0} = \mathrm{Unif}\{0,h,2h,\dots,Jh\}$ and $c_{\mathrm{sc}} = \frac{2}{15(e-1)}$, where $h \coloneqq 1/\sqrt{n}$, $L = 4\max\{\sqrt{C_0/n}, h\}$, and $J \coloneqq \lceil L/h \rceil$.

This fully establishes \cref{prop:large-variance}.
\subsection{Conversion into Corruption Strategy}
\label{app:variance-corr}
Fix a parameter $\mu^{\ast}$ from \cref{prop:large-variance}, and let $X = (X_1, \dots, X_n) \sim \mathcal{N}(\mu^{\ast}, \mathbb{I}_d)^{\otimes n}$. Let $k = \lfloor \eta n \rfloor \ge 1$. Partition $[n]$ into $M \coloneqq \lceil n/k \rceil$ disjoint nonempty blocks $B_1, \dots, B_M$ of sizes at most $k$. Let $Y_i = (X_{\ell})_{\ell \in B_i}$ denote the data in block $B_i$. Equivalently, we may view $X$ as the collection $(Y_1, \dots, Y_M)$. Let $Y_i'$ be an independent fresh copy of $Y_i$, independent of all other variables, and let $X^{(i)}$ denote the dataset obtained from $X$ by replacing block $Y_i$ by $Y_i'$. 

We note that by definition, each $X^{(i)}$ differs from $X$ in at most $k$ sample positions. Therefore, pointwise, we have
\[ S_\eta^f(X)^2 \ge \| f(X) - f(X^{(i)}) \|^2, \quad \forall i = 1, 2, \dots, M. \]
By vector Efron-Stein inequality in \cref{thm:efron-stein}: for independent blocks $Y_1, \dots, Y_M$ and independent replacements $Y_i'$, we have
\[ \Var_{\mu^{\ast}}(f(X)) \le \frac{1}{2} \sum_{i = 1}^M \mathbb{E} \| f(X) - f(X^{(i)}) \|^2. \]
Hence, we see that
\[ \max_{1 \le i \le M} \mathbb{E} \| f(X) - f(X^{(i)} ) \|^2 \ge \frac{2}{M} \Var_{\mu^{\ast}}(f(X)). \]
Combining this with \cref{prop:large-variance}, we see that there exists some constant $c_0 > 0$ such that
\[ \mathbb{E}_{X}[S_{\eta}^{f}(X)^2] \ge c_0 \frac{\eta d}{n}. \]
Therefore, we have 
\[ \sup_{\mu \in \mathbb{R}^d} \ES_{\eta, 2}(f; \mu) = \sup_{\mu \in \mathbb{R}^d} \left( \mathbb{E}_X[S_\eta^f(X)^2] \right)^{1/2} \ge \sqrt{c_0} \sqrt{\frac{\eta d}{n}}, \]
and taking $\useconstant{const:varLower} \coloneqq \sqrt{c_0}$ proves \cref{thm:variance-main}.

\section{Proof of~\texorpdfstring{\cref{thm:bernoulli-main}}{}}
\label{app:bernoulli-main}
Throughout this section, for $t \in \{ 0, \dots, n \}$, define the Hamming-weight layer $\mathsf{Layer}(t) \coloneqq \{ x \in \{ 0, 1 \}^n : |x| = t \}$. Let $U_t$ denote a uniformly random element of $\mathsf{Layer}(t)$. 

\begin{lemma} \label{lem:beta-binom}
Let $P \sim \mathsf{Unif}[0,1]$ and, conditional on $P = p$, sample $X \sim \operatorname{Bern}(p)^{\otimes n}$. Let $T \coloneqq |X| = \sum_{i =1}^n X_i$. Then $\mathbb{P}(T = t) = \frac{1}{n + 1}$ for all $t \in \{ 0, \dots, n \}$. Therefore, conditional on $T = t$, $X$ is uniform on $\mathsf{Layer}(t)$. 
\end{lemma}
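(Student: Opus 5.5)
The plan is to compute the marginal law of $T$ directly by integrating out $P$, and then deduce the conditional law of $X$ from the fact that, given $P=p$, the likelihood of $x$ depends on $x$ only through $|x|$.

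\textbf{Step 1: the weight $T$ is uniform.} Conditional on $P=p$, we have $T \sim \Bin(n,p)$. Hence
\[ \mathbb{P}(T=t)=\int_0^1 \binom{n}{t} p^t(1-p)^{n-t}\,dp = \binom{n}{t} B(t+1,n-t+1). \]
By the Beta integral,
\[ B(t+1,n-t+1)=\frac{t!\,(n-t)!}{(n+1)!}. \]
Therefore $\mathbb{P}(T=t)=\frac{n!}{t!(n-t)!}\cdot\frac{t!(n-t)!}{(n+1)!}=\frac{1}{n+1}$.

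If one prefers to avoid the Beta integral, there is a combinatorial alternative. Take $n+1$ i.i.d.\ uniform variables, let one of them play the role of $P$, and let the other $n$ play the role of the thresholds $U_i$, with $X_i=\mathbf 1\{U_i\le P\}$. Then $T$ is the rank of $P$ among the $n+1$ variables, and by exchangeability this rank is uniform on $\{0,\dots,n\}$.

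\textbf{Step 2: conditional uniformity on the layer.} Fix $x\in\mathsf{Layer}(t)$. Then
\[ \mathbb{P}(X=x)=\int_0^1 p^{t}(1-p)^{n-t}\,dp, \]
and this depends on $x$ only through $t=|x|$. So all elements of $\mathsf{Layer}(t)$ have equal probability. Dividing by $\mathbb{P}(T=t)>0$ shows that $X$, conditional on $T=t$, is uniform on $\mathsf{Layer}(t)$.

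No step here is a real obstacle. The only computation is the Beta integral, and the exchangeability argument above sidesteps even that.
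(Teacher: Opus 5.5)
Your proof is correct and follows the paper's approach: Step 1 is exactly the Beta-integral computation $\binom{n}{t}\frac{t!(n-t)!}{(n+1)!}=\frac{1}{n+1}$. Your Step 2 spells out the conditional-uniformity claim, which the paper asserts without argument. The observation that $\mathbb{P}(X=x)$ depends on $x$ only through $|x|$ suffices for it, and the rank/exchangeability argument is a valid alternative to the Beta integral.
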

\begin{proof}
    For $t \in \{ 0, \dots, n \}$, we have 
    \[ \mathbb{P}(T = t) = \int_0^1 \binom{n}{t} p^t ( 1-p)^{n - t} \ dp = \binom{n}{t} \frac{t!(n - t)!}{(n + 1)!} = \frac{1}{n + 1}. \]
\end{proof}

\begin{lemma} \label{lem:layer-transport}
 Fix integers $t, \ell$ with $0 \le t \le n - \ell$. Sample $U_t$ uniformly from $\mathsf{Layer}(t)$. Conditional on $U_t = x$, choose an $\ell$-subset $I$ uniformly among the zero coordinates of $x$ and flip those coordinates to $1$. Let $V$ be the resulting vector. Then $V \sim \mathsf{Unif}(\mathsf{Layer}(t + \ell))$ and $d_H(U_t, V) = \ell$. 
\end{lemma}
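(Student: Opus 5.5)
The Hamming-distance claim is immediate from the construction, so I will handle it first. $V$ is obtained from $U_t$ by flipping exactly the $\ell$ coordinates in $I$, and all of these are zero coordinates of $U_t$. Hence $U_t$ and $V$ differ in exactly those $\ell$ positions, so $d_H(U_t,V)=\ell$ deterministically. Since every coordinate in $I$ changes from $0$ to $1$, we also get $|V|=t+\ell$, so $V\in\mathsf{Layer}(t+\ell)$. The condition $t\le n-\ell$ guarantees that $x$ has $n-t\ge \ell$ zero coordinates, so the uniform $\ell$-subset $I$ is well defined.

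For uniformity I would compute $\mathbb{P}(V=y)$ directly for a fixed $y\in\mathsf{Layer}(t+\ell)$. The event $\{V=y\}$ happens exactly when two things occur: $U_t=x$ for some $x\le y$ coordinatewise with $|x|=t$ (equivalently, $x$ is obtained from $y$ by turning off some $\ell$ of its ones), and $I=\mathrm{supp}(y)\setminus\mathrm{supp}(x)$. Each such pair $(x,I)$ has probability
\[ \binom{n}{t}^{-1}\binom{n-t}{\ell}^{-1}, \]
and the number of admissible $x$ is $\binom{t+\ell}{\ell}$, which does not depend on $y$. Therefore $\mathbb{P}(V=y)$ is the same for every $y$ in the layer, so $V$ is uniform on $\mathsf{Layer}(t+\ell)$. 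As a sanity check, the identity
\[ \binom{n}{t}\binom{n-t}{\ell}=\binom{n}{t+\ell}\binom{t+\ell}{\ell} \]
confirms the common value is $\binom{n}{t+\ell}^{-1}$.

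Alternatively, symmetry gives the same conclusion. The joint law of $(U_t,V)$ is invariant under coordinate permutations, and the symmetric group acts transitively on $\mathsf{Layer}(t+\ell)$, so the law of $V$ is uniform there.

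None of these steps is a real obstacle. The only point needing care is bookkeeping: one must check that the map $(x,I)\mapsto y$ has exactly $\binom{t+\ell}{\ell}$ preimages for each $y$.
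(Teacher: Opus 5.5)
Your proof is correct and follows the same route as the paper: the Hamming-distance claim is immediate, and uniformity holds because each $y\in\mathsf{Layer}(t+\ell)$ has exactly $\binom{t+\ell}{\ell}$ preimages $(x,I)$, all equally likely. Your explicit evaluation of the common value via $\binom{n}{t}\binom{n-t}{\ell}=\binom{n}{t+\ell}\binom{t+\ell}{\ell}$ and your permutation-symmetry argument are both valid additions.
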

\begin{proof}
    The Hamming-distance claim is immediate because exactly $\ell$ zero coordinates are flipped to one. It remains to prove uniformity. Fix $y \in \mathsf{Layer}(t + \ell)$. A pair $(x, I)$ maps to $y$ exactly when $I$ is an $\ell$-subset of the one coordinates of $y$, and $x$ is obtained from $y$ by changing the coordinates in $I$ back to zero. Thus, every $y \in \mathsf{Layer}(t + \ell)$ has exactly $\binom{t + \ell}{\ell}$ preimages. Since the sampling probability of each valid pair $(x, I)$ is the same, $V$ is thus uniform on $\mathsf{Layer}(t + \ell)$. 
\end{proof}

We are now ready to prove \cref{thm:bernoulli-main}. Indeed, we may replace $f = \operatorname{clip}_{[0,1]}\circ f$ by \cref{lem:clipping}. 

Let $\varepsilon \coloneqq \frac{C_{\mathsf{Bern}}}{\sqrt{n}}$. Choose a constant $\alpha=\alpha(C_{\mathsf{Bern}})\in(0,1/4]$ small enough that $2\alpha + \frac{2C_{\mathsf{Bern}}}{c_*}\sqrt{\alpha} \le \frac{1}{2}$. Let $k\coloneqq\lfloor\eta n\rfloor$ and define $\ell\coloneqq\min\{k,\lfloor\alpha n\rfloor\}$. For all sufficiently large $n$, depending only on $\alpha$, the assumptions $k\ge1$ and $n$ sufficiently large imply $1\le\ell\le k$ and $\ell\le\frac{n}{2}$. 

For each $t \in \{0, \dots, n\}$, define $a_t \coloneqq \mathbb E[f(U_t)]$. Fix $t\in\{0,\dots,n-\ell\}$. By \cref{lem:layer-transport}, we can couple $U_t$ and $U_{t+\ell}$ so that $d_H(U_t,U_{t+\ell})=\ell\le k$. Hence $\mathbb E[S_\eta^f(U_t)] \ge \mathbb E[|f(U_{t+\ell})-f(U_t)|] \ge |a_{t+\ell}-a_t|$. Therefore, we have 
\[\sum_{t=0}^{n-\ell} \mathbb E[S_\eta^f(U_t)] \ge \sum_{t=0}^{n-\ell}|a_{t+\ell}-a_t|.\] 
Decompose RHS into arithmetic progressions modulo $\ell$. For each residue $r\in\{0,\dots,\ell-1\}$, write $r, r+\ell, \dots, r+T_r\ell$, where $T_r\coloneqq\lfloor\frac{n-r}{\ell}\rfloor$ and $j_r\coloneqq r+T_r\ell$. Then $j_r\in\{n-\ell+1,\dots,n\}$. By the triangle inequality along this arithmetic progression, $\sum_{q=0}^{T_r-1} |a_{r+(q+1)\ell}-a_{r+q\ell}| \ge |a_{j_r}-a_r|$. Summing over $r=0,\dots,\ell-1$ yields 
\[ \sum_{t=0}^{n-\ell}|a_{t+\ell}-a_t| \ge \sum_{r=0}^{\ell-1}|a_{j_r}-a_r|. \] 

We now lower-bound the boundary gaps. Fix $r \in \{ 0, \dots, \ell - 1 \}$ and set $p = \frac{r}{n}$. If $X \sim \operatorname{Bern}(p)^{\otimes n}$ and $T = |X|$, then conditional on $T = r$, $X$ is uniform on $\mathsf{Layer}(r)$. Therefore, 
\[ \varepsilon \ge \mathbb{E}[|f(X) - p|] \ge \mathbb{P}(T = r) \mathbb{E}[|f(U_r) - p|] \ge \mathbb{P}(T = r) |a_r - p|. \]
We know that since $T \sim \mathsf{Bin}(n,p)$, \cref{lem:binomial-point-mass} gives us $\mathbb{P}(T = r) \ge \frac{c_{\ast}}{\sqrt{r + 1}} \ge \frac{c_{\ast}}{\sqrt{\ell}}$ for some positive constant $c_{\ast}$. Therefore, $|a_r - r/n| \le \frac{\varepsilon}{c_{\ast}} \sqrt{\ell}$. Since $r \le \ell - 1$, we have $a_r \le \frac{\ell}{n} + \frac{\varepsilon}{c_{\ast}} \sqrt{\ell}$. Similarly, fix $j \in \{ n - \ell + 1, \dots, n \}$ and write $j = n - s$ with $0 \le s \le \ell - 1$. Set $p = 1 - \frac{s}{n}$. By the same argument, we have $|a_j - j/n| \le \frac{\varepsilon}{c_{\ast}} \sqrt{\ell}$, and thus we get $a_j \ge 1 - \frac{\ell}{n} - \frac{\varepsilon}{c_{\ast}} \sqrt{\ell}$.  

Combining both of these, we see that for every $r \in \{ 0, \dots, \ell - 1 \}$, $|a_{j_r} - a_r| \ge 1 - \frac{2\ell}{n} - \frac{2 \varepsilon}{c_{\ast}} \sqrt{\ell}$. Because $\ell \le \alpha n$ and $\varepsilon = \frac{C_{\mathsf{Bern}}}{\sqrt{n}}$, our choice of $\alpha$ guarantees that $|a_{j_r} - a_r| \ge \frac{1}{2}$ for all residues $r$. Substituting this back into our telescoping sums give us
\[ \sum_{t = 0}^{n - \ell} \mathbb{E}[S_\eta^f(U_t)] \ge \frac{\ell}{2}. \]
To conclude, let $P \sim \mathsf{Unif}[0,1]$ and, conditional on $P = p$, sample $X \sim \operatorname{Bern}(p)^{\otimes n}$. By \cref{lem:beta-binom}, 
\[ \int_0^1 \mathbb{E}_{X \sim \operatorname{Bern}(p)^{\otimes n}}[S_\eta^f(X)] \ dp = \frac{1}{n + 1} \sum_{t = 0}^n \mathbb{E}[S_\eta^f(U_t)] \ge \frac{\ell}{2(n + 1)}. \]
Therefore this also gives us
\[ \sup_{p \in [0,1]}\mathbb{E}_{X \sim \operatorname{Bern}(p)^{\otimes n}}[S_\eta^f(X)] \ge \frac{\ell}{2(n + 1)}. \]
It remains to compare $\frac{\ell}{n + 1}$ to $\eta$. Indeed, suppose $k \le \lfloor \alpha n \rfloor$. Then $\ell = k$. In this case, $\frac{k}{n} \ge \frac{\eta}{2}$, and thus $\frac{\ell}{2(n + 1)} \ge \frac{\eta}{8}$. Otherwise, in the case $k > \lfloor \alpha n \rfloor$, then $\ell = \lfloor \alpha n \rfloor$. For all sufficiently large $n$, we have $\ell \ge \frac{\alpha n}{2}$ which therefore gives us the bound $\frac{\ell}{2(n + 1)} \ge \frac{\alpha}{8} \ge \frac{\alpha}{8} \eta$. This shows us
\[ \sup_{p \in [0,1]}\mathbb{E}_{X \sim \operatorname{Bern}(p)^{\otimes n}}[S_\eta^f(X)] \ge \frac{\alpha}{8} \eta. \]
Therefore, this theorem holds with $\useconstant{const:bernoulli} \coloneqq \frac{\alpha}{8}$.

\end{document}